\def\section{\@startsection{section}{1}%
\z@{.7\linespacing\@plus\linespacing}{.5\linespacing}%
{\normalfont\bfseries\centering\large}}
\newcommand{\Z}{\mathbb{Z}}
\newcommand{\C}{\mathbb{C}}
\newcommand{\E}{\mathbb{E}}
\newcommand{\Mat}{\operatorname{Mat}}
\newcommand{\End}{\operatorname{End}}
\newcommand{\Orb}{\mathcal{O}}
\newcommand{\GL}{\operatorname{GL}}
\newcommand{\Sym}{\mathfrak{S}}
\newcommand{\GG}{\mathcal{G}}
\newcommand{\HH}{\mathcal{H}}
\newcommand{\Ptn}{\mathcal{P}}
\newcommand{\vt}{\operatorname{vt}}
\newcommand{\wt}{\operatorname{wt}}
\newcommand{\ov}{\overline}
\newcommand{\bi}{{\boldsymbol{i}}}  
\newcommand{\bj}{{\boldsymbol{j}}}
\newcommand{\bk}{{\boldsymbol{k}}}
\newcommand{\bl}{{\boldsymbol{l}}}
\newcommand{\bp}{{\boldsymbol{p}}}
\newcommand{\bq}{{\boldsymbol{q}}}
\newcommand{\qand}{\quad\text{and}\quad}
\newcommand{\cm}{\checkmark}
\newcommand{\op}{\operatorname{op}}
\newcommand{\Lin}{{\mathscr{L}}}
\newcommand{\bA}{\mathbf{A}}
\newcommand{\bB}{\mathbf{B}}
\newcommand{\bC}{\mathbf{C}}
\newcommand{\bM}{\mathbf{M}}
\newcommand{\bP}{\mathbf{P}}
\newcommand{\bJ}{\mathbf{J}}
\newcommand{\bQ}{\mathbf{Q}}
\newcommand{\bI}{\mathbf{I}}
\newcommand{\bG}{\mathbf{G}}
\newcommand{\bzero}{\mathbf{0}}
\newcommand{\V}{\mathbf{V}} 
\newcommand{\bv}{\mathbf{v}} 
\newcommand{\half}{\, 1/2} 
\newcommand{\fhalf}{\tfrac{1}{2}} 
\newcommand{\rt}[1]{\rotatebox{90}{#1}}
\newtheorem{thm}{Theorem}[section] 
\newtheorem{lem}[thm]{Lemma}
\newtheorem{prop}[thm]{Proposition}
\newtheorem{cor}[thm]{Corollary}
\newtheorem*{thm*}{Theorem}
\newtheorem*{lem*}{Lemma}
\newtheorem*{prop*}{Proposition}
\newtheorem*{cor*}{Corollary}
\newtheorem*{conj*}{Conjecture}
\newtheorem{property}{Property}
\newtheorem*{property*}{Property}
\newtheorem*{properties*}{Properties}
\theoremstyle{definition}
\newtheorem{defn}[thm]{Definition}
\newtheorem{example}[thm]{Example}
\newtheorem*{defn*}{Definition}
\newtheorem*{example*}{Example}
\newtheorem*{examples*}{Examples}
\newtheorem*{meth*}{Method}
\newtheorem{rmk}[thm]{Remark}
\newtheorem{algorithm}[thm]{Algorithm}
\newtheorem*{rmk*}{Remark}
\newtheorem*{rmks*}{Remarks}
\newtheorem*{algorithm*}{Algorithm}
\renewcommand{\theenumi}{\alph{enumi}} 
\begin{document}
\title[Schur--Weyl duality for partition algebras]%
      {Integral Schur--Weyl duality\\for partition algebras}

  \author{Chris Bowman}
  \address{School of Mathematics, Statistics, and
  Actuarial Science, University of Kent, Canterbury, Kent, 
  CT2 7NZ, UK}
  \email{C.D.Bowman@kent.ac.uk}

  \author{Stephen Doty}
  \address{Department of Mathematics and Statistics, Loyola University
    Chicago, Chicago, IL 60660 USA}
  \email{doty@math.luc.edu}

  \author{Stuart Martin}
  \address{DPMMS, Centre for Mathematical Sciences, Wilberforce Road,
  Cambridge, CB3 0WB, UK}
  \email{S.Martin@dpmms.cam.ac.uk}


\begin{abstract}\noindent
Let $\V$ be a free module of rank $n$ over a commutative ring $\Bbbk$.
We prove that tensor space $\V^{\otimes r}$ satisfies Schur--Weyl
duality, regarded as a bimodule for the action of the group algebra of
the Weyl group of $\GL(\V)$ and the partition algebra $\Ptn_r(n)$ over
$\Bbbk$.  We also prove a similar result for the half partition
algebra.
\end{abstract}
\maketitle

\section*{Introduction}\noindent
A number of instances of Schur--Weyl duality (a bimodule for which the
centraliser of each action equals the image of the other) have been
established in positive characteristic over the past forty years,
including: \cite{Benson-Doty} (extending \cites{Carter-Lusztig, dCP,
  Green:book}) for general linear and symmetric groups; \cite{DDH} for
symplectic groups and Brauer algebras; \cite{DH} for orthogonal groups
and Brauer algebras (characteristic not 2); \cite{Garge-Nebhani} for
special orthogonal groups and the Brauer--Grood algebra; \cites{DDS1,
  DDS2, DD, Donkin} for general linear groups and walled-Brauer
algebras. In all these cases, semisimple versions of Schur--Weyl
duality were observed much earlier as an application of
Artin--Wedderburn theory, and the extension to positive characteristic
(where representations tend to be non-semisimple) is much more
difficult to establish. This paper continues the above body of work, by
extending the Schur--Weyl duality between symmetric groups and
partition algebras to non-semisimple situations.

Let $\Bbbk$ be a commutative ring (always with 1). Fix a free
$\Bbbk$-module $\V$ of rank $n$, and fix a $\Bbbk$-basis $\{\bv_1,
\dots, \bv_n\}$ of $\V$. As explained in Section \ref{sec:bimodules},
tensor space $\V^{\otimes r}$ is a $(\Bbbk W_n, \Ptn_r(n))$-bimodule,
where $W_n$ is the Weyl group of $\GL(\V)$. We identify $\V^{\otimes
  r}$ with $\V^{\otimes r} \otimes \bv_n \subset \V^{\otimes (r+1)}$,
which becomes a $(\Bbbk W_{n-1}, \Ptn_{r+\half}(n))$-bimodule by
restriction.

The purpose of this paper is to show that Schur--Weyl duality holds
for both bimodule structures on $\V^{\otimes r}$; that is, we have
the following result.

\begin{thm*}[Schur--Weyl duality]
  Let $\Bbbk$ be a commutative ring. Then:
  \begin{enumerate}
  \item The centraliser algebras $\End_{\Ptn_r(n)}(\V^{\otimes r})$,
  $\End_{W_n}(\V^{\otimes r})$ coincide with the images of the
  representations $\Bbbk W_n \to \End_\Bbbk(\V^{\otimes r})$,
  $\Ptn_r(n)^{\op} \to \End_\Bbbk(\V^{\otimes r})$, respectively.
  
  \item Similarly, the centraliser algebras
    $\End_{\Ptn_{r+\half}(n)}(\V^{\otimes r})$,
  $\End_{W_{n-1}}(\V^{\otimes r})$ coincide with the images of the
  representations $\Bbbk W_{n-1} \to \End_\Bbbk(\V^{\otimes r})$,
  $\Ptn_{r+\half}(n)^{\op} \to \End_\Bbbk(\V^{\otimes r})$,
  respectively.
  \end{enumerate}
\end{thm*}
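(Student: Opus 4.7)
My plan is to establish part (a) by describing both centraliser algebras and both image algebras as free $\Bbbk$-modules with explicit combinatorial bases, matching them up directly, and to deduce (b) from (a) by a restriction argument along the inclusion $\V^{\otimes r}\otimes\bv_n\hookrightarrow\V^{\otimes(r+1)}$. Throughout, a base change argument will reduce identities over an arbitrary $\Bbbk$ to the classical semisimple statement over $\Q$.

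I would begin by checking that the $\Bbbk W_n$- and $\Ptn_r(n)$-actions commute on $\V^{\otimes r}$, which gives the containments $\im(\Bbbk W_n)\subseteq\End_{\Ptn_r(n)}(\V^{\otimes r})$ and $\im(\Ptn_r(n))\subseteq\End_{W_n}(\V^{\otimes r})$. For the easier half of (a), $\End_{W_n}(\V^{\otimes r}) = \im(\Ptn_r(n))$, I would compute the centraliser directly: $W_n$-invariant endomorphisms have matrix entries constant on $W_n$-orbits of pairs $(\bi,\bj)\in\{1,\ldots,n\}^r\times\{1,\ldots,n\}^r$, and these orbits biject with set partitions of $\{1,\ldots,r,1',\ldots,r'\}$ into at most $n$ parts. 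The partition algebra acts via its diagram basis: diagrams with more than $n$ parts act as zero, while those with at most $n$ parts realise exactly the orbit-sum basis above, giving equality on the nose.

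The harder direction $\End_{\Ptn_r(n)}(\V^{\otimes r}) = \im(\Bbbk W_n)$ I would approach by base change. Working universally over $\Z$, I would show that $\End_{\Ptn_r(n)_\Z}(\V^{\otimes r}_\Z)$ and $\im(\Z W_n)$ are both $\Z$-pure submodules (direct summands) of $\End_\Z(\V^{\otimes r}_\Z)$, so that both constructions commute with base change $-\otimes_\Z\Bbbk$. Over $\Q$, both algebras are semisimple and the double centraliser identity is classical Artin--Wedderburn. A rank comparison, combined with the containment $\im(\Z W_n)\subseteq\End_{\Ptn_r(n)_\Z}(\V^{\otimes r}_\Z)$, then yields equality over $\Z$ and hence over any commutative ring $\Bbbk$.

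For part (b), I would use that $\V^{\otimes r}\otimes\bv_n$ is the image of a $W_{n-1}$-equivariant projector inside $\End_\Bbbk(\V^{\otimes(r+1)})$, and that $\Ptn_{r+\half}(n)\subseteq\Ptn_{r+1}(n)$ is the subalgebra preserving this subspace; part (b) then follows from part (a) by an idempotent truncation argument compatible with base change. The main obstacle I anticipate is the $\Z$-purity step: producing an explicit $\Z$-complementary summand to $\im(\Ptn_r(n)_\Z)$ in $\End_\Z(\V^{\otimes r}_\Z)$, and similarly for $\im(\Z W_n)$, is precisely what distinguishes the integral theorem from its semisimple ancestor, and I expect this to require either a cellularity argument for $\Ptn_r(n)$ or an explicit diagrammatic basis calculation that remains a basis after reduction modulo any prime.
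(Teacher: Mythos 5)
Your treatment of the ``easy half'' of (a) --- that $\End_{W_n}(\V^{\otimes r})$ equals the image of $\Ptn_r(n)$, by computing $W_n$-orbits of pairs of multi-indices --- is exactly the Halverson--Ram argument the paper cites, and your observation that it works over any commutative ring is correct. The gap is in your plan for the hard direction, and you explicitly acknowledge it without resolving it.

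You propose to work over $\Z$: show $\im(\Z W_n)$ and $E_\Z := \End_{\Ptn_r(n)_\Z}((\V^{\otimes r})_\Z)$ are $\Z$-pure in $\End_\Z((\V^{\otimes r})_\Z)$, deduce equality over $\Z$ by rank comparison from the semisimple case over $\Q$, and then base-change. Two distinct assertions are needed, and neither is established. First, purity of $\im(\Z W_n)$ --- that is, torsion-freeness of $\End_\Z/\im(\Z W_n)$ --- is essentially equivalent to the theorem over all prime fields, so assuming it is circular. Second, and more subtly, you conflate ``$E_\Z$ is a pure submodule'' with ``$E_\Z$ commutes with base change.'' These are \emph{not} the same. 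Since $E_\Z$ is the kernel of the commutator map $A\mapsto([A,g])_g$ over generators $g$ of $\Ptn_r(n)_\Z$, it is automatically a pure submodule of $\End_\Z$ (its quotient embeds in a free $\Z$-module). But the statement you actually need, $E_\Z\otimes_\Z\Bbbk\cong\End_{\Ptn_r(n)_\Bbbk}(\V^{\otimes r})$, requires that the \emph{image} of the commutator map be pure in its target, which is a separate and non-automatic condition; failing it, the centraliser can genuinely grow after reduction modulo $p$. You call this ``the main obstacle'' and gesture at cellularity or a reduction-mod-$p$ basis argument, but supply neither. This is precisely the issue that made the other integral Schur--Weyl dualities cited in the introduction (Benson--Doty, Dipper--Doty--Hu, Doty--Hu, etc.) hard: the semisimple theorem over $\Q$ does not base-change for free.

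The paper's proof takes an entirely different route, working directly over a general commutative $\Bbbk$ with no base change at all. It describes $\E_\Bbbk(n,r) = \End_{\Ptn_r(n)}(\V^{\otimes r})$ explicitly via slice-sum conditions, introduces a restriction map $\rho:\E_\Bbbk(n,r)\to\E_\Bbbk(n,r-1)$, the submodules $\E_\Bbbk(n,r)^i_j$ of special invariants, and the isomorphism $\E_\Bbbk(n,r)^i_j\cong\E_\Bbbk(n-1,r)$. It then proves, by an interleaved double induction on $(n,r)$ governed by explicit ``free patterns'' of matrix entries (Properties 1--4 in Sections 5--6), that every invariant decomposes as a sum of special invariants. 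This gives the surjectivity of $\Bbbk W_n\to\E_\Bbbk(n,r)$ constructively. For part (b), the paper also does not use an idempotent-truncation argument: it identifies $\E_\Bbbk(n,r+\fhalf)$ with the algebra of $\bv_n$-fixing invariants $\E_\Bbbk(n,r)^n_n$, which in turn is isomorphic to $\E_\Bbbk(n-1,r)$, so (b) reduces to (a) at rank $n-1$. Your projector idea would likely give an equivalent reformulation of this step, but the burden of the proof is carried by the combinatorial free-pattern machinery, which your proposal does not touch.
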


\noindent
This result is well known in case $\Bbbk=\C$; it follows from standard
facts in the theory of semisimple algebras (see e.g.,
\cite{HR-2005}*{Thms~5.4, 3.6}). Our contribution is to extend the
result to arbitrary commutative rings $\Bbbk$.
%

As an application of the main theorem, we prove
\cite{BDM:second}*{Corollary 7.5} that the centraliser algebras
$\End_{\Ptn_r(n)}(\V^{\otimes r})$,
$\End_{\Ptn_{r+\half}(n)}(\V^{\otimes r})$ are cellular algebras over
a commutative ring, in the sense of \cite{Graham-Lehrer}.  Recently
\cite{Donkin:Cellularity} proved a similar result for
$\End_{W_n}(\V^{\otimes r})$; see also \cites{BH:1, BH:2} for related
results over fields of characteristic zero.

The proof of the theorem (for general $\Bbbk$) is obtained as
follows. First, the fact that the images of the maps from the
partition algebras coincide with the $W_n$ and $W_{n-1}$ centralisers
already appears in the proof of \cite{HR-2005}*{Thm.~3.6}; the
combinatorial argument there works for any commutative ring
$\Bbbk$. So we only need to consider the representations of $\Bbbk
W_n$ and $\Bbbk W_{n-1}$. The proof that those representations surject
onto the appropriate centraliser algebras is Theorem \ref{thm:main1}
of this paper, our main result. In particular, the centraliser
algebras $\End_{\Ptn_r(n)}(\V^{\otimes r})$,
$\End_{\Ptn_{r+1/2}(n)}(\V^{\otimes r})$ are spanned over $\Bbbk$ by
elements of the form $\bP(w)^{\otimes r}$, where $w \in W_n$,
$W_{n-1}$ respectively. Here, $\bP(w)$ is the permutation matrix
corresponding to $w$.

\subsection*{Acknowledgments.}
Part of the work on this paper was carried out in December 2017 at the
Institute for Mathematical Sciences, National University of
Singapore. We are grateful to the organisers of the conference
``Representation theory of symmetric groups and related algebras'' at
that venue, for providing an opportunity for collaboration.

\section{Tensor space}\label{sec:bimodules}\noindent
As above, $\Bbbk$ is a fixed commutative ring (with unit). We denote
its zero by $0_\Bbbk$ and its unit by $1_\Bbbk$. We identify ordinary
integers $m \in \Z$ with elements of $\Bbbk$ by means of the canonical
map $\Z \to \Bbbk$ defined by $m \mapsto m 1_\Bbbk$. In particular,
this identifies $0,1 \in \Z$ respectively with $0_\Bbbk, 1_\Bbbk \in
\Bbbk$.

Throughout this paper, $\V$ denotes a fixed free $\Bbbk$-module of
rank $n$ with a given basis $\{\bv_1, \dots, \bv_n\}$, by means of which
we identify $\V$ with $\Bbbk^n$. For any positive integer $r$, the set
\begin{equation}\label{eq:TS-basis}
\{ \bv_{i_1} \otimes \cdots \otimes \bv_{i_r} : i_1,\dots, i_r = 1,
\dots, n \}
\end{equation}
is a basis of the $r$th tensor power $\V^{\otimes r}$. The general
linear group $\GL(\V)$ of $\Bbbk$-linear automorphisms of $\V$ acts
naturally on the left on $\V$; this action extends diagonally to an
action on $\V^{\otimes r}$. The symmetric group $\Sym_r$ acts on the
right on $\V^{\otimes r}$ by permuting the tensor positions; this
action is known as the \emph{place-permutation} action, defined by
\begin{equation}\label{eq:PP-action}
(\bv_{i_1} \otimes \cdots \otimes \bv_{i_r})^\sigma = \bv_{i_{1 \sigma^{-1}}}
\otimes \cdots \otimes \bv_{i_{r \sigma^{-1}}} \, , \text{ for } \sigma \in
\Sym_r
\end{equation}
extended linearly. (We write maps in $\Sym_r$ on the \emph{right} of
their arguments.) Thus we have commuting actions of the groups
$\GL(\V)$, $\Sym_r$ on $\V^{\otimes r}$, making $\V^{\otimes r}$ into
a $(\Bbbk \GL(\V), \Bbbk \Sym_r)$-bimodule.  Classical Schur--Weyl
duality is the statement that the centraliser of each action is
generated by the image of the other. It was proved originally over
$\Bbbk = \C$ by Schur, extended to infinite fields by J.~A.~Green and
many other authors, and extended to sufficiently large fields in
\cite{Benson-Doty}.

Let $W_n$ be the Weyl group of $\GL(\V)$, i.e., the group of elements
of $\GL(\V)$ permuting the basis $\{\bv_1, \dots, \bv_n\}$.  We identify
$W_n$ with the group of permutation matrices, regarded as matrices
with entries from $\Bbbk$. By restricting the action of $\GL(\V)$ to
$W_n$, we obtain left actions of $W_n$ on $\V$ and $\V^{\otimes
  r}$. To be explicit, $w \in W_n$ acts by
\begin{equation}\label{eq:W_n-action}
  w (\bv_{j_1} \otimes \cdots \otimes \bv_{j_r}) = \bv_{w(j_1)} \otimes
  \cdots \otimes \bv_{w(j_r)}.
\end{equation}
(We write maps in $W_n$ on the \emph{left} of their arguments.)
Extended linearly, the action of $W_n$ defines a linear representation
$\Bbbk W_n \to \End_\Bbbk( \V^{\otimes r} )$ of the group algebra
$\Bbbk W_n$.

Of course, $W_n \cong \Sym_n^{\op}$, the opposite group. We distinguish
these symmetric groups notationally throughout this paper, because
their actions on tensor space are very different even when $n=r$.

Let $\Ptn_r(\delta) \supset \Sym_r$ be the partition algebra
(introduced independently by Paul P.~Martin \cites{Marbook,
  Martin:1994} and V.~F.~R.~Jones \cite{Jones} in relation to the Potts
model in particle physics) on $2r$ vertices, with parameter $\delta
\in \Bbbk$.  The algebra $\Ptn_r(\delta)$ has a $\Bbbk$-basis in
bijection with the collection of set partitions on
\[ \{1, \dots, r\} \cup \{1', \dots, r'\}. \]
Each basis element $d$ may be regarded as a graph with $2r$ vertices
arranged in two rows, with vertices numbered $1, \dots, r$ along the
top and $1', \dots r'$ along the bottom. Two vertices in the graph are
connected by an edge if and only if they lie in the same subset of the
set partition $d$. Multiplication in the algebra may be defined on the
basis elements by stacking diagrams and removing any connected
components that contain no vertices from the top and bottom rows of
the stack. After removing such interior components, the result of
stacking $d_1$ above $d_2$ is a new diagram $d_3$, and the
multiplication is defined by
\begin{equation}\label{eq:Ptn-alg-mult}
  d_1 d_2 = \delta^k \, d_3
\end{equation}
where $k$ is the number of removed interior connected components. It
can be checked that this rule, extended linearly, defines an
associative multiplication on $\Ptn_r(\delta)$.

%
\begin{example} The following 3 diagrams all depict the same set-partition:  $\big\{\{1,3,\bar3,\bar4\},\{2,\bar1\},\{4\},\{5,\bar2,\bar5\}\big\}$.
$$		\begin{tikzpicture}[scale=0.5]
			\foreach \x in {1,...,5}
				{\fill[black] (\x,3) circle (4pt);
				\fill[black] (\x,0) circle (4pt);}
				\draw (1,3) arc (-180:0:1) -- (3,0) arc (180:0:0.5);
				\draw (2,3)--(1,0);
				\draw (2,0) arc (180:0:1.5) -- (5,3);
		\end{tikzpicture}\hspace{1cm}
		\begin{tikzpicture}[scale=0.5]
			\foreach \x in {1,...,5}
				{\fill[black] (\x,3) circle (4pt);
				\fill[black] (\x,0) circle (4pt);}
				\draw (1,3) arc (-180:0:1) -- (4,0) arc (0:180:0.5 and 0.25);
				\draw (2,3)--(1,0);
				\draw (2,0) .. controls (3,1) and (4,1) .. (5,0) -- (5,3) -- (2,0);
		\end{tikzpicture}\hspace{1cm}
		\begin{tikzpicture}[scale=0.5]
			\foreach \x in {1,...,5}
				{\fill[black] (\x,3) circle (4pt);
				\fill[black] (\x,0) circle (4pt);}
				\draw (3,3) .. controls (2.5,2) and (1.5,2) .. (1,3) -- (3,0) arc (180:0:0.5 and 0.25);
				\draw (2,3)--(1,0);
				\draw (5,0) .. controls (4,1) and (3,1) .. (2,0) -- (5,3);
		\end{tikzpicture}$$
		\end{example}
%

	\begin{example}
In the following example,  the 
diagram on the right-hand side of the equality
is obtained by 
stacking the two diagrams on the left of the equality on top of each other (with the leftmost diagram on top)
and removing the singleton from the middle of the diagram (at the expense of multiplication by the parameter $\delta^1$).  
	$$	\begin{tikzpicture}[scale=0.5]
 \foreach \x in {1,...,5,7,8,...,11,14,15,...,18}
	 	{\fill[black] (\x,0) circle (4pt);
	 	\fill[black] (\x,3) circle (4pt);}
	 	
	\draw (3,3) arc (0:-180:0.5 and 0.25) -- (3,0);\draw (4,3)--(1,0);\draw (5,3)--(5,0);
	\draw (6,1.5) node {$\times$};
	\draw (9,3) .. controls (8.5,2) and (7.5,2) .. (7,3) -- (9,0) arc (180:0:0.5 and 0.25);
	\draw (8,3)--(7,0);
	\draw (11,0) .. controls (10,1) and (9,1) .. (8,0) -- (11,3);
	\draw (12,1.5) node {$=$};\draw (13,1.5) node {$\delta$};
	\draw (18,0) .. controls (17,1) and (16,1) .. (15,0) -- (18,3);
	\draw (17,3) arc (0:-180:0.5 and 0.25) arc (0:-180:0.5 and 0.25) -- (16,0) arc (180:0:0.5 and 0.25);
	\end{tikzpicture}$$
	\end{example}

The half partition algebra $\Ptn_{r+\half}(\delta)$, introduced in
\cite{Martin:2000}, is the subalgebra of $\Ptn_{r+1}(\delta)$ spanned
by diagrams such that vertex $r+1$ is connected to vertex $(r+1)'$.

By specialising the parameter $\delta$ to $n$, we obtain a linear
representation of $\Ptn_r(n)^{\op}$, defined as follows. Let $I(n,r) =
\{1, \dots, n\}^r$ be the set of multi-indices of length $r$. To
simplify the notation, we often write
\[
  \text{$i_1 \cdots i_r$ instead of $(i_1, \dots, i_r)$}
\]
for an element of $I(n,r)$.  Elements of $I(n,r)$ can also be written
as $i_{1'} \cdots i_{r'}$ or $(i_{1'}, \dots, i_{r'})$.  Connected
components of a diagram are called \emph{blocks}; they correspond to
the subsets of the underlying set partition. Following \cite{HR-2005},
we define a scalar $(d)^{i_1 \cdots i_r}_{i_{1'} \cdots i_{r'}} \in
\Bbbk$, for a diagram $d$ and any $i_1 \cdots i_r$, $i_{1'} \cdots
i_{r'}$ in $I(n,r)$, by
\[
(d)^{i_1 \cdots i_r}_{i_{1'} \cdots i_{r'}} = 
\begin{cases}
  1 & \text{if } i_\alpha = i_\beta \text{ whenever } \alpha \ne
  \beta \text{ are in the same block of } d\\ 0 & \text{otherwise}.
\end{cases}
\]
Here, the indices $\alpha,\beta$ may be primed or unprimed.  The
diagram $d$ acts on $\V^{\otimes r}$, on the right, by the rule
\begin{equation}\label{eq:Ptn-action}
(\bv_{i_1} \otimes \cdots \otimes \bv_{i_r})^d = \sum_{i_{1'} \cdots
    i_{r'}} (d)^{i_1 \cdots i_r}_{i_{1'} \cdots i_{r'}} \, (\bv_{i_{1'}}
  \otimes \cdots \otimes \bv_{i_{r'}})
\end{equation}
extended linearly.  Note that if $d \in \Sym_r$ is a permutation
diagram, then $d$ acts by the usual place-permutation action. Extended
linearly, this action defines a linear representation $\Ptn_r(n)^{\op}
\to \End_\Bbbk(\V^{\otimes r})$.  The commuting actions define a
$(\Bbbk W_n, \Ptn_r(n))$-bimodule structure on $\V^{\otimes r}$.  By
identifying $\V^{\otimes r}$ with $\V^{\otimes r} \otimes \bv_n \subset
\V^{\otimes (r+1)}$, by restriction $\V^{\otimes r}$ may also be
regarded as a $(\Bbbk W_{n-1}, \Ptn_{r+\half}(n))$-bimodule, where
$W_{n-1}$ is identified with the subgroup of $W_n$ consisting of the
permutations fixing $n$.

Because the actions commute, the representations $\Bbbk W_n
\to \End_\Bbbk(\V^{\otimes r})$, $\Ptn_r(n) \to \End_\Bbbk(\V^{\otimes
  r})$ induce $\Bbbk$-algebra homomorphisms
\begin{equation}\label{eq:induced-homs}
\begin{aligned}
  \Phi_{n,r}&: \Bbbk W_n \to \End_{\Ptn_r(n)}( \V^{\otimes r} ),\\
  \Psi_{n,r}&: \Ptn_r(n)^{\op} \to \End_{W_n}( \V^{\otimes r} )
\end{aligned}
\end{equation}
into the respective centraliser algebras. Similarly, by restriction we
have induced $\Bbbk$-algebra homomorphisms
\begin{equation}\label{eq:induced-homs-half}
\begin{aligned}
  \Phi_{n,r+\half}&: \Bbbk W_{n-1}
  \to \End_{\Ptn_{r+\half}(n)}(\V^{\otimes r} ), \\ \Psi_{n,r+\half}&:
  \Ptn_{r+\half}(n)^{\op} \to \End_{W_{n-1}}( \V^{\otimes r} ).
\end{aligned}
\end{equation}
Schur--Weyl duality is equivalent to the surjectivity of these induced
homomorphisms. As noted above, the combinatorial argument given in
\cite{HR-2005}*{Thm.~3.6}, which works over any commutative ring
$\Bbbk$, proves the surjectivity of the maps $\Psi_{n,r}$,
$\Psi_{n,r+\half}$. Indeed, the partition algebra was originally
defined with that property in mind.

Thus, we only need to prove the surjectivity of the induced maps
$\Phi_{n,r}$, $\Phi_{n,r+\half}$ defined in \eqref{eq:induced-homs},
\eqref{eq:induced-homs-half}.

\section{Generalised doubly-stochastic matrices}\noindent
\label{sec:GDS}%
We denote the entry in the $i$th row and $j$th column of a matrix
$\bM$ by $m^i_j$, and write $\bM=(m^i_j)$. In this paper, we will
always follow this convention of using bold letters for matrices and
lower case letters for their entries. 
It will be convenient to employ the following terminology; see e.g.,
\cites{Brualdi, Johnsen, Gibson, Lai}.

\begin{defn}\label{def:GDS}
  An $n \times n$ matrix $\bM = \big( m^i_j \big)_{i,j = 1, \dots, n}$
  is \emph{generalised doubly-stochastic} (GDS) if there is some $s =
  s(\bM)$ in $\Bbbk$ such that both:
  \begin{enumerate}
  \item $\sum_{j=1}^n m^i_j = s$, for all $i=1, \dots, n$, and
  \item $\sum_{i=1}^n m^i_j = s$,   for all $j = 1, \dots, n$. 
  \end{enumerate}
\end{defn}

In other words, $\bM$ is GDS if there is a common value for all its
row and column sums.  

\begin{lem}\label{lem:GDS}
  Assume that $n > 1$. An $n \times n$ matrix $\bM$ over the ring
  $\Bbbk$ is GDS if and only if $\bM$ commutes with the matrix $\bJ_n =
  (1)_{i,j=1,\dots, n}$ of all ones.
\end{lem}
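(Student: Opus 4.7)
The plan is to prove both directions by directly computing the entries of $\bM\bJ_n$ and $\bJ_n\bM$. The $(i,j)$ entry of $\bM\bJ_n$ is
\[
(\bM\bJ_n)^i_j = \sum_{k=1}^n m^i_k \cdot 1 = \sum_{k=1}^n m^i_k,
\]
i.e.\ the $i$th row sum of $\bM$, which depends only on $i$. Symmetrically, $(\bJ_n\bM)^i_j = \sum_{k=1}^n m^k_j$ is the $j$th column sum of $\bM$, depending only on $j$.

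For the forward direction, if $\bM$ is GDS with common row/column sum $s$, then both $\bM\bJ_n$ and $\bJ_n\bM$ have every entry equal to $s$, so $\bM\bJ_n = s\bJ_n = \bJ_n\bM$.

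For the converse, assume $\bM\bJ_n = \bJ_n\bM$. Equating entries gives
\[
\sum_{k=1}^n m^i_k = \sum_{k=1}^n m^k_j \qquad \text{for all } i,j.
\]
Fixing $j$ and varying $i$ shows all row sums of $\bM$ are equal; fixing $i$ and varying $j$ shows all column sums are equal; and any such equation shows the common row sum equals the common column sum. Calling this common value $s$ verifies the GDS condition.

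There is no real obstacle here; the lemma is essentially a bookkeeping exercise about how left and right multiplication by $\bJ_n$ encode row and column sums. (The hypothesis $n>1$ is not actually needed for the argument, since in the case $n=1$ both conditions hold trivially, but it is harmless to assume it.)
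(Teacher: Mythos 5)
Your proof is correct and takes essentially the same approach as the paper: both compute $\bM\bJ_n$ and $\bJ_n\bM$ directly, identify them as the matrices of row sums (constant along rows) and column sums (constant along columns) respectively, and observe that equality forces all row and column sums to agree. Your side remark that $n>1$ is not actually needed is also correct, since for $n=1$ both conditions hold trivially.
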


\begin{proof}
The matrix $\bJ_n \bM$ is the $n \times n$ matrix with the column sum
${\rm co}_i(\bM) := \sum_j m^i_j$ in each entry of the $i$th column. On the
other hand, $\bM \bJ_n$ is the $n \times n$ matrix with the row sum
${\rm ro}_j(\bM) := \sum_i m^i_j$ in each entry of the $j$th row. So $\bM$
commutes with $\bJ_n$ if and only if ${\rm co}_i(\bM) = {\rm ro}_j(\bM)$ for all
$i,j = 1, \dots, n$. Since $n > 1$, it follows that this is so if and
only if all the row and column sums have a common value.
\end{proof}

If $\bM$ is identified with the $\Bbbk$-linear endomorphism of $\V$
defined by $\bv_j \mapsto \sum_i m^i_j \, \bv_i$, then $s(\bM)$ is an
eigenvalue for the eigenvector $\bv_1 + \cdots + \bv_n$. The proof is
easy. This observation leads to the following characterisation of GDS
operators, where GDS operators on $\V$ are defined to be linear
operators whose matrices with respect to the basis $\{\bv_1, \dots,
\bv_n\}$ are GDS.



\section{Description of the invariants $\E_\Bbbk(n,r)$}%
\label{sec:invariants}\noindent
To ease the notation, we henceforth put $\E_\Bbbk(n,r)
= \End_{\Ptn_r(n)}(\V^{\otimes r})$. Similarly, we set
$\E_\Bbbk(n,r+\fhalf) = \End_{\Ptn_{r+\half}(n)}(\V^{\otimes r})$. The
purpose of this section is to describe $\E_\Bbbk(n,r)$; a description
of $\E_\Bbbk(n,r+\fhalf)$ will be given in the next section.

We introduce some additional notation. For each multi-index $\bi =
i_1\cdots i_r$ in $I(n,r)$, $\sigma$ in $\Sym_r$, and $w$ in $W_n$, we
set
\[
  \bi^\sigma = (i_1\cdots i_r)^\sigma = i_{1 \sigma^{-1}} \cdots
  i_{r \sigma^{-1}}, \quad w \bi = w(i_1 \cdots i_r) = w(i_1) \cdots w(i_r).
\] 
The assignment $(\bi , \sigma) \mapsto \bi^\sigma$ defines a right
action (the place-permutation action) of the symmetric group $\Sym_r$
on the set $I(n,r)$; the assignment $(w,\bi) \mapsto w \bi$ defines a
left action of $W_n$ on $I(n,r)$. These left and right actions on
$I(n,r)$ commute: $(w\bi)^\sigma = w(\bi^\sigma)$, for all $w \in
W_n$, $\sigma \in \Sym_r$. Set:
\begin{equation}
  \bv_\bi = \bv_{i_1} \otimes \cdots \otimes \bv_{i_r}.
\end{equation}
Then the basis of $\V^{\otimes r}$ given in \eqref{eq:TS-basis} is
$\{\bv_\bi: \bi \in I(n,r)\}$, and the commuting actions of $W_n$ and
$\Sym_r$ on $\V^{\otimes r}$ considered in Section \ref{sec:bimodules}
are given by the rules $(w, \bv_\bi) \mapsto \bv_{w\bi}$ and
$(\bv_\bi, \sigma) \mapsto \bv_{\bi^\sigma}$.

Orbits for the left action of $W_n$ on $I(n,r)$ are called
\emph{value-types} and may be identified with set partitions of $\{1,
\dots, r\}$.  The subsets in the value-type $\vt(\bi) = \Lambda$ of
$\bi = i_1 \cdots i_r$ record the positions at which the distinct
values that appear are constant. More precisely, we make the following
definition.

\begin{defn}\label{def:value-type}
  Let $\bi = i_1 \cdots i_r \in I(n,r)$ be given.  For each $v =
  1, \dots, n$, let $\Lambda'_v$ be the set of all positions
  $\alpha = 1, \dots, r$ for which $i_\alpha = v$. Then $\{1, \dots,
  r\} = \bigcup_{v=1}^n \Lambda'_v$. Discard any empty $\Lambda'_v$ to
  obtain the set partition $\Lambda = \{ \Lambda'_v : \Lambda'_v \ne
  \emptyset\}$ which defines $\vt(\bi)$. Let $\ell(\Lambda)$ be the
  number of non-empty subsets in $\Lambda$.
\end{defn}

For example, the multi-index $\bi = abbcabc$ (for distinct elements
$a,b,c \in \{1, \dots, n\}$) has value-type $\vt(\bi) = \{\{1,5\},
\{2,3,6\}, \{4,7\}\}$.

Recall that orbits for the right action of $\Sym_r$ on $I(n,r)$ are
called \emph{weights} and may be identified with weak compositions of
$r$ of length at most $n$. To be precise, the weight $\wt(\bi)$ of a
multi-index $\bi = i_1 \cdots i_r$ is the composition $\wt(\bi) =
(\mu_1, \dots, \mu_n)$ where for each value $v = 1, \dots, n$ the
statistic $\mu_v$ counts the number of positions $\alpha = 1, \dots,
r$ such that $i_\alpha = v$. In other words, $\mu_v = |\Lambda'_v|$
for each $v = 1, \dots, n$.

We also need to consider the set $\Omega(n,r)$ of orbits for the right
action of $\Sym_r$ on $I(n,r) \times I(n,r)$ defined by
$(\bi, \bj)^\sigma = (\bi^\sigma, \bj^\sigma)$.

In order to describe the invariants $\End_{\Ptn_r(n)}(\V^{\otimes r})$
it suffices to consider a set of generators. Halverson and Ram
\cite{HR-2005} showed that $\Ptn_r(\delta)$ is generated by the
diagrams:
\[
\begin{tikzpicture}[scale=0.45]
	\draw (-1,1.5) node {$p_\alpha=\quad$};
	\foreach \x in {0,2,3,4,6}
	{\fill[black] (\x,0) circle (4pt);
	\fill[black] (\x,3) circle (4pt);}
	\foreach \x in {0,2,4,6}
	{\draw (\x,3)--(\x,0);}
	\draw (3,3.6) node {$\scriptstyle\alpha$};
	\draw (3,-0.6) node {$\scriptstyle\alpha'$};
\draw (1,0) node {$\ldots$};\draw (1,3) node {$\ldots$};
\draw (5,0) node {$\ldots$};\draw (5,3) node {$\ldots$};
\end{tikzpicture}
\hspace{0.5in}
\begin{tikzpicture}[scale=0.45]
	\draw (-1.2,1.5) node {$p_{\alpha,\beta}=\quad$};
	\foreach \x in {0,2,3,4, 5.5+.5,6.5+.5,8,10}
	{\fill[black] (\x,0) circle (4pt);
	\fill[black] (\x,3) circle (4pt);}
	\foreach \x in {0,2,4, 5.5+.5,7.5+.5,10}
	{\draw (\x,3)--(\x,0);}
	\draw (3,3)--(3,0);   \draw (3,3) arc (0:180:-2 and -0.9) ;
        \draw (3,0) arc (0:180:-2 and 0.9) ;
	\draw (3,3.6) node {$\scriptstyle\alpha$};\draw (7,3.6)
        node {$\scriptstyle\beta$};
	\draw (3,-0.6) node {$\scriptstyle\alpha'$};\draw (7,-0.6)
        node {$\scriptstyle\beta'$};
\draw (1,3) node {$\ldots$};\draw (1,0) node {$\ldots$};
\draw (5,3) node {$\ldots$};\draw (5,0) node {$\ldots$};
\draw (9,3) node {$\ldots$};\draw (9,0) node {$\ldots$};
\end{tikzpicture}
\]
\[
\begin{tikzpicture}[scale=0.45]
	\draw (-1.2,1.5) node {$s_{\alpha,\beta}=\quad$};
	\foreach \x in {0,2,3,4, 5.5+.5,6.5+.5,8,10}
	{\fill[black] (\x,0) circle (4pt);
	\fill[black] (\x,3) circle (4pt);}
	\foreach \x in {0,2,4, 5.5+.5,7.5+.5,10}
	{\draw (\x,3)--(\x,0);}
	\draw (3,3)--(7,0);\draw (7,3)--(3,0);
	\draw (3,3.6) node {$\scriptstyle\alpha$};\draw (7,3.6)
            node {$\scriptstyle\beta$};
	\draw (3,-0.6) node {$\scriptstyle\alpha'$};\draw (7,-0.6)
            node {$\scriptstyle\beta'$};
\draw (1,3) node {$\ldots$};\draw (1,0) node {$\ldots$};
\draw (5,3) node {$\ldots$};\draw (5,0) node {$\ldots$};
\draw (9,3) node {$\ldots$};\draw (9,0) node {$\ldots$};
\end{tikzpicture}\hspace{0.5cm}
\]
where $1 \leq \alpha<\beta \leq r$. In fact, $\Ptn_r(\delta)$ is
generated by the usual Coxeter generators $s_{\alpha,\alpha+1}$ for
$\alpha = 1, \dots, r-1$ along with just one $p_\alpha$ and one
$p_{\alpha,\beta}$.

Let $\GG_r(n)$, $\HH_r$, $\Bbbk\Sym_r$ be the subalgebras of
$\Ptn_r(n)$ respectively generated by the $p_\alpha$,
$p_{\alpha,\beta}$, and $s_{\alpha,\beta}$ pictured above. Note that
$\HH_r$ is independent of $n$. We shall separately consider the
centraliser algebras of these three subalgebras:
\begin{gather*}
G_\Bbbk(n,r) = \End_{\GG_r(n)}(\V^{\otimes r}), \quad
H_\Bbbk(n,r) = \End_{\HH_r} (\V^{\otimes r}),
\\ S_\Bbbk(n,r) = \End_{\Sym_r} (\V^{\otimes r}).
\end{gather*}
Then we have $\E_\Bbbk(n,r) = G_\Bbbk(n,r) \cap H_\Bbbk(n,r) \cap
S_\Bbbk(n,r)$. Note that $S_\Bbbk(n,r)$ is the classical Schur algebra
appearing in \cite{Green:book}. Furthermore, we have
$\HH_1=\Bbbk\Sym_1=\Bbbk$ and thus
$H_\Bbbk(n,1)=S_\Bbbk(n,1)=\End_\Bbbk(\V)$, which means that
$\E_\Bbbk(n,1) = G_\Bbbk(n,1)$. By Lemma \ref{lem:GDS}, the algebra
$G_\Bbbk(n,1)$ coincides with the set of $n \times n$ GDS matrices
considered in Section \ref{sec:GDS}.

Henceforth, we write $\Mat_{I(n,r)}(\Bbbk)$ for the set of $n^r \times
n^r$ matrices over $\Bbbk$, with rows and columns indexed by the set
$I(n,r)$. We always identify the matrix $\bA = (a^\bi_\bj)$ with the
$\Bbbk$-linear endomorphism of $\V^{\otimes r}$ defined on basis
elements by $\bv_\bj \mapsto \sum_{\bi \in I(n,r)} a^{\bi}_{\bj} \,
\bv_{\bi}$.

\begin{prop}\label{prop:GHS}
  Let $G_\Bbbk(n,r)$, $H_\Bbbk(n,r)$, and $S_\Bbbk(n,r)$ be the
  subalgebras of $\End_\Bbbk( \V^{\otimes r} )$ consisting of
  the endomorphisms commuting with the action of all the $p_\alpha$,
  $p_{\alpha,\beta}$, and $s_{\alpha,\beta}$ respectively. Let $\bA =
  ( a^{\bi}_{\bj} ) \in \Mat_{I(n,r)}(\Bbbk)$. Then
\begin{enumerate}
\item $\bA$ belongs to $G_\Bbbk(n,r)$ if and only if for each place
  $\alpha = 1, \dots, r$ and for each fixed $\bp = i_1 \cdots
  i_{\alpha-1} i_{\alpha+1}\cdots i_r$, $\bq = j_1 \cdots j_{\alpha-1}
  j_{\alpha+1} \cdots j_r$ in $I(n,r-1)$ there is some scalar
  $b^\bp_\bq(\alpha)$ in $\Bbbk$ such that
  \begin{align*}
    \sum_{i = 1}^n a^{i_1 \cdots i_{\alpha-1}\, i\, i_{\alpha+1}
      \cdots i_r}_{j_1 \cdots j_{\alpha-1}\, j\, j_{\alpha+1} \cdots
      j_r} &= b^\bp_\bq(\alpha), \text{ for any } j = 1, \dots, n
    \\ \intertext{ and} \sum_{j = 1}^n a^{i_1 \cdots i_{\alpha-1}\,
      i\, i_{\alpha+1} \cdots i_r}_{j_1 \cdots j_{\alpha-1}\, j\,
      j_{\alpha+1} \cdots j_r} &= b^\bp_\bq(\alpha), \text{ for any }
    i = 1, \dots, n.
  \end{align*}

\item $\bA$ belongs to $H_\Bbbk(n,r)$ if and only if $\bA$ preserves
  value-type, in the sense: $a^{\bi}_{\bj} = 0$ for all pairs
  $(\bi,\bj) \in I(n,r) \times I(n,r)$ such that $\vt(\bi) \ne
  \vt(\bj)$.
  
\item $\bA$ belongs to $S_\Bbbk(n,r)$ if and only if $\bA$ is constant
  on each place-permutation orbit $\Orb$ in $\Omega(n,r)$, i.e., if
  $a^\bi_\bj = a^\bk_\bl$ whenever $(\bi,\bj)$ and $(\bk,\bl)$ are in
  the same orbit $\Orb \in \Omega(n,r)$.
\end{enumerate}
\end{prop}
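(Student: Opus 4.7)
The proof proceeds by treating each of the three generators $p_\alpha$, $p_{\alpha,\beta}$, $s_{\alpha,\beta}$ separately, computing the right action on the basis $\{\bv_\bi\}$ using the defining formula \eqref{eq:Ptn-action}, and then translating the commutation relation $\bA \circ d = d \circ \bA$ (where $d$ acts on the right and $\bA$ on the left) into explicit equations on the matrix entries $a^\bi_\bj$.

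For part (a), I would first observe that the blocks of $p_\alpha$ are the singletons $\{\alpha\}$ and $\{\alpha'\}$ together with the pairs $\{\gamma,\gamma'\}$ for $\gamma \ne \alpha$, so the defining rule gives
\[
\bv_\bj \cdot p_\alpha \;=\; \sum_{v=1}^{n} \bv_{j_1 \cdots j_{\alpha-1}\, v\, j_{\alpha+1}\cdots j_r}.
\]
Expanding both sides of $\bA(\bv_\bj \cdot p_\alpha) = \bA(\bv_\bj)\cdot p_\alpha$ and comparing the coefficient of a basis vector $\bv_\bk$, the left side depends only on $\bk$ and on $\bq = j_1\cdots \hat{j_\alpha}\cdots j_r$, while the right side depends only on $\bj$ and on $\bp = k_1\cdots \hat{k_\alpha}\cdots k_r$. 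The equality then forces both sides to be independent of $k_\alpha$ (respectively $j_\alpha$), and the common value is the scalar $b^\bp_\bq(\alpha)$ of the statement, yielding the two sum identities.

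For part (b), I would identify that the single block of $p_{\alpha,\beta}$ is $\{\alpha,\beta,\alpha',\beta'\}$, with all other positions straight through, so the scalar $(p_{\alpha,\beta})^\bi_{\bi'}$ forces $i_\alpha = i_\beta$ and $\bi' = \bi$; hence $p_{\alpha,\beta}$ acts as the diagonal projection onto the subspace spanned by $\{\bv_\bi : i_\alpha = i_\beta\}$. Comparing $\bA(\bv_\bj\cdot p_{\alpha,\beta})$ with $\bA(\bv_\bj)\cdot p_{\alpha,\beta}$ coefficient-by-coefficient shows $a^\bi_\bj = 0$ whenever the biconditional $i_\alpha = i_\beta \iff j_\alpha = j_\beta$ fails. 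Running this argument over all pairs $\alpha<\beta$ is equivalent to saying that the equality pattern of entries of $\bi$ and of $\bj$ agree, which by Definition~\ref{def:value-type} is exactly $\vt(\bi) = \vt(\bj)$.

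Part (c) is the classical case: $s_{\alpha,\beta}$ acts by the place-permutation \eqref{eq:PP-action}, so $\bv_\bj\cdot s_{\alpha,\beta} = \bv_{\bj^{s_{\alpha,\beta}}}$. Equating the two sides of the commutation relation and re-indexing the summation gives $a^\bi_\bj = a^{\bi^{s_{\alpha,\beta}}}_{\bj^{s_{\alpha,\beta}}}$ for every transposition; since transpositions generate $\Sym_r$, this extends to the full diagonal right action $(\bi,\bj)\mapsto (\bi^\sigma,\bj^\sigma)$, giving the orbit characterisation. The only real obstacle is bookkeeping---careful tracking of which index is summed over, and in part (a) recognising that the \emph{a priori} one-sided equality forces the \emph{independence} of $k_\alpha$ and $j_\alpha$ respectively, which is what packages the two sum conditions with a single common value $b^\bp_\bq(\alpha)$.
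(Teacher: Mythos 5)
Your proposal is correct and follows essentially the same route as the paper. For part (a) the paper packages the computation by identifying $\Psi_{n,r}(p_\alpha)$ as $\bI_n^{\otimes(\alpha-1)} \otimes \bJ_n \otimes \bI_n^{\otimes(r-\alpha)}$ and invoking Lemma~\ref{lem:GDS}, and for part (b) it reorders to block form, but in both cases the underlying computation (matching coefficients of the commutation relation and observing that a priori one-sided independence of $k_\alpha$ and $j_\alpha$ forces a common slice-sum value) is exactly what you carry out directly.
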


\begin{proof}
(a) Suppose that $1 \le \alpha \le r$. By the definition in equation
\eqref{eq:Ptn-action}, the matrix $\Psi_{n,r}(p_{\alpha})$
representing $p_{\alpha}$ with respect to the basis $\{\bv_\bi : \bi
\in I(n,r)\}$ has the form
\[
\big( \delta_{i_1,j_1} \cdots \delta_{i_{\alpha-1}, j_{\alpha-1}}
\delta_{i_{\alpha+1}, j_{\alpha+1}} \cdots \delta_{i_r, j_r} \big)_{\bi,\bj
  \in I(n,r) \times I(n,r)} \, .
\]
More succinctly, the matrix can be written as $(\bI_n)^{\otimes
  \alpha-1} \otimes \bJ_n \otimes (\bI_n)^{\otimes r-\alpha}$, where
$\bJ_n$ is the $n \times n$ matrix defined in Section
\ref{sec:GDS}. It follows immediately from Lemma \ref{lem:GDS} that
the commutant $\End_{p_\alpha}(\V^{\otimes r})$ of $p_\alpha$
is the set of endomorphisms satisfying the displayed condition in part
(a) of the proposition. Thus, the centraliser $G_\Bbbk(n,r)$ of all
the $p_\alpha$ for $1 \le \alpha \le r$ is the set of endomorphisms
satisfying the condition for all $\alpha$. This proves part (a).
  
(b) Suppose that $1 \le \alpha < \beta \le r$. By
\eqref{eq:Ptn-action}, the matrix $\Psi_{n,r}(p_{\alpha,\beta})$
representing $p_{\alpha,\beta}$ with respect to the basis $\{\bv_\bi :
\bi \in I(n,r)\}$ is
\[
\big( \delta_{i_{1},j_{1}} \cdots \delta_{i_{\alpha-1},j_{\beta-1}}
\delta_{i_\alpha, i_\beta, j_\alpha, j_\beta} \delta_{i_{\alpha+1},j_{\beta+1}} \cdots
\delta_{i_r,j_r} \big)_{(\bi,\bj) \in I(n,r) \times I(n,r)}.
\]
Here $\delta_{i_\alpha, i_\beta, j_\alpha, j_\beta} = \delta_{i_\alpha,
  i_\beta} \delta_{i_\alpha, j_\alpha} \delta_{i_\alpha, j_\beta}$ is a
generalised Kronecker delta symbol, which is 1 if
$i_\alpha=i_\beta=j_\alpha=j_\beta$ and 0 otherwise. So the matrix
$\Psi_{n,r}(p_{\alpha,\beta})$ is a diagonal matrix with
$(\bi,\bi)$-entry equal to
\[
\begin{cases}
  1 & \text{if } i_\alpha
  = i_\beta \\
  0 & \text{otherwise},
\end{cases}
\]
for $\bi = i_1 \cdots i_r$ in $I(n,r)$.  If we reorder $I(n,r)$ so
that all the nonzero diagonal entries come before the zero ones, then
the matrix $\Psi_{n,r}(p_{\alpha,\beta})$ has the block form
  \[
  \left[\begin{array}{c|c} {\bI} & \bzero \\ \hline \bzero &
    \bzero\end{array}\right]
  \]
and an easy calculation with block matrices shows that the commutant
$\End_{p_{\alpha,\beta}}(\V^{\otimes r})$ of $p_{\alpha,\beta}$
consists of all block matrices of the form
  \[
  \left[\begin{array}{c|c}
    \bA & \bzero \\ \hline \bzero & \bB
  \end{array}\right]
  \]
where $\bA, \bB $ are arbitrary matrices (of the relevant sizes).  So
$p_{\alpha,\beta}$ sends all $\bv_\bi$ satisfying the condition
$i_\alpha = i_\beta$ to a linear combination of $\bv_\bj$ such that
$j_\alpha = j_\beta$ and sends the $\bv_\bi$ satisfying $i_\alpha \ne
i_\beta$ to a linear combination of $\bv_\bj$ such that $j_\alpha \ne
j_\beta$.

It follows that the centraliser algebra $H_\Bbbk(n,r)$, which is the
intersection of the commutants of the various $p_{\alpha,\beta}$ for
$1 \le \alpha < \beta \le r$, is the algebra of all value-type
preserving endomorphisms. This proves part (b).

(c) The proof of part (c) is well known and can be found, for instance, in
\cite{Green:book}.
\end{proof}

\begin{defn}\label{def:slices}
Let $\bi = i_1 \cdots i_r$, $\bj = j_1 \cdots j_r$ be given
multi-indices. Suppose that $\alpha \in \{1, \dots, r\}$ is a
place. The row and column $\alpha$-\emph{slices} of $\bA$ determined
by $(\bi,\bj)$ are respectively the $n$-vectors
  \[
  \big( a^{i_1\cdots i_{\alpha-1} \,i\, i_{\alpha+1}\cdots
    i_r}_{j_1\cdots j_{\alpha-1} \, j_\alpha\, j_{\alpha+1}\cdots
    j_r}\big)_{i = 1, \dots, n} \quad \text{ and } \quad \big(
  a^{i_1\cdots i_{\alpha-1} \,i_\alpha\, i_{\alpha+1}\cdots
    i_r}_{j_1\cdots j_{\alpha-1} \,j\, j_{\alpha+1}\cdots j_r}
  \big)_{j = 1, \dots, n} .
  \]
We denote these vectors respectively by the suggestive shorthand
notations
  \[
  a^{i_1\cdots i_{\alpha-1} \,*\, i_{\alpha+1}\cdots i_r}_{j_1\cdots
    j_{\alpha-1} \, j_\alpha\, j_{\alpha+1}\cdots j_r} \qand a^{i_1\cdots
    i_{\alpha-1} \,i_\alpha\, i_{\alpha+1}\cdots i_r}_{j_1\cdots
    j_{\alpha-1} \,*\, j_{\alpha+1}\cdots j_r} \, .
  \]
Putting a $\sum$ symbol in front of a slice implies a sum over
the elements of the slice.  We also extend this notation in the
obvious way to allow two or more $*$s to appear; we call them
\emph{double slices}, etc.
\end{defn}

Here are some basic properties of invariants in $\E_\Bbbk(n,r)$, for
$r \ge 2$. (If $r=1$, invariants in $\E_\Bbbk(n,r)$ are just $n
\times n$ GDS matrices; see Section \ref{sec:GDS}.)

\begin{prop}\label{prop:inv-properties}
  Suppose that $r \ge 2$ and that $\bA$ is an element of
  $\E_\Bbbk(n,r)$. Then:
  \begin{enumerate}
  \item For any $\bp$, $\bq \in I(n,r-1)$, the scalars
    $b^\bp_\bq(\alpha)$ appearing in Proposition \ref{prop:GHS}(a) are
    independent of $\alpha$. That is, all the slice sums determined by
    $\bp, \bq$ have the same value $b^\bp_\bq$.
  \item For any $\bp$, $\bq \in I(n,r-1)$, let $\bA^\bp_\bq$ be the $n
    \times n$ block $\bA^\bp_\bq = (a^{\bp\, i}_{\bq\, j})_{i,j = 1,
      \dots, n}$. Then $\bA^\bp_\bq$ is GDS, with common row and
    column sums equal to $b^\bp_\bq$.
  \item If $\bi = i_1 \cdots i_r$, $\bj = j_1 \cdots j_r$ are in
    $I(n,r)$ and $\vt(\bi) \ne \vt(\bj)$ then $a^\bi_\bj = 0$.
  \item If $\bi = i_1 \cdots i_r$, $\bj = j_1 \cdots j_r$ are in
    $I(n,r)$, $\vt(\bi) = \vt(\bj)$, and $i_r$ appears in $\bp = i_1
    \cdots i_{r-1}$, then $j_r$ also appears in the same place in $\bq
    = j_1 \cdots j_{r-1}$, and $a^\bi_\bj = b^\bp_\bq$.
  \end{enumerate}
\end{prop}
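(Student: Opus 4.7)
The plan is to establish the four parts in order, exploiting in each that $\bA$ lies in all three centralisers $G_\Bbbk(n,r)$, $H_\Bbbk(n,r)$, $S_\Bbbk(n,r)$ of Proposition \ref{prop:GHS}. For (a) the key ingredient is symmetric-group invariance. Since $\bA \in S_\Bbbk(n,r)$, we have $a^\bi_\bj = a^{\bi^\sigma}_{\bj^\sigma}$ for every $\sigma \in \Sym_r$. Given two slots $\alpha < \beta \le r$, I would exhibit a single permutation $\sigma_{\alpha,\beta} \in \Sym_r$ (the appropriate cyclic permutation, depending only on $\alpha, \beta$ and not on the inserted value or on $\bp, \bq$) that carries the length-$r$ multi-index obtained by inserting a free variable $i$ into slot $\alpha$ of $\bp$ onto the one obtained by inserting $i$ into slot $\beta$ of $\bp$, and likewise for $\bj$. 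Since the sum ranges over all values of $i$, $\Sym_r$-invariance converts the slice sum defining $b^\bp_\bq(\alpha)$ into that defining $b^\bp_\bq(\beta)$ term by term, giving equality.

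Parts (b) and (c) are then essentially formal. Part (b) is Proposition \ref{prop:GHS}(a) applied at $\alpha = r$: the row and column sums of $\bA^\bp_\bq$ are exactly the slice sums $\sum_j a^{\bp\, i}_{\bq\, j}$ and $\sum_i a^{\bp\, i}_{\bq\, j}$, and by part (a) these share the common value $b^\bp_\bq$; hence $\bA^\bp_\bq$ is GDS in the sense of Definition \ref{def:GDS}. Part (c) is just the translation of $\bA \in H_\Bbbk(n,r)$ furnished by Proposition \ref{prop:GHS}(b).

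For (d) I would first use the combinatorial structure of value-types. A value-type is a set partition of $\{1,\dots,r\}$ whose blocks record positions of equal value, so $i_\alpha = i_r$ places $\alpha$ and $r$ in the same block of $\vt(\bi) = \vt(\bj)$; consequently $j_\alpha = j_r$, yielding the first assertion. For the equality $a^\bi_\bj = b^\bp_\bq$, I would apply the slice identity from (a) at slot $r$:
\[
\sum_{i=1}^n a^{\bp\, i}_{\bq\, j_r} = b^\bp_\bq.
\]
For any $i \ne i_r$, the source multi-index $\bp\, i$ has distinct values in positions $\alpha$ and $r$ (namely $i_r$ and $i$), whereas the target $\bq\, j_r$ has equal values in those positions; so $\vt(\bp\, i) \ne \vt(\bq\, j_r)$, and part (c) forces $a^{\bp\, i}_{\bq\, j_r} = 0$. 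Only the term $i = i_r$ survives, giving $a^\bi_\bj = b^\bp_\bq$.

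The argument presents no deep obstacle. The only step that requires care is (d), where one must verify that value-type preservation kills every off-diagonal term of the slot-$r$ slice sum; once that is done, identification with $b^\bp_\bq$ is immediate from (a).
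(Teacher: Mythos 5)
Your proof is correct and follows essentially the same route as the paper: (a) via $\Sym_r$-invariance forcing slice sums to be independent of $\alpha$, (b) and (c) by reading off Proposition \ref{prop:GHS}(a) and (b), and (d) by killing off-diagonal terms of a slot-$r$ slice sum using value-type preservation. The only cosmetic difference is in (d), where you sum over the source index $\sum_i a^{\bp\,i}_{\bq\,j_r}$ while the paper sums over the target index $\sum_j a^{\bp\,i_r}_{\bq\,j}$; both are valid by part (b).
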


\begin{proof}
(a) Since $\E_\Bbbk(n,r)$ is the intersection of $G_\Bbbk(n,r)$,
$H_\Bbbk(n,r)$, and $S_\Bbbk(n,r)$ it is clear that $\bA$ must be
constant on place-permutation orbits, by Proposition
\ref{prop:GHS}(c). This immediately implies that its slice sums are
independent of $\alpha$.

(b) This follows from part (a). Explicitly, for any $\alpha = 1,
\dots, r$ and any given $\bp = i_1 \cdots i_{\alpha-1} i_{\alpha+1}
\cdots i_r$, $\bq = j_1 \cdots j_{\alpha-1} \, j_{\alpha+1} \cdots
j_r$ in $I(n,r-1)$, part (a) says that there exists a scalar
$b^\bp_\bq$ in $\Bbbk$ such that
\[
\sum a^{i_1\cdots i_{\alpha-1} \,*\, i_{\alpha+1}\cdots
  i_r}_{j_1\cdots j_{\alpha-1} \, j \, j_{\alpha+1}\cdots j_r} =
b^\bp_\bq \qand \sum a^{i_1\cdots i_{\alpha-1} \, i \,
  i_{\alpha+1}\cdots i_r}_{j_1\cdots j_{\alpha-1} \,*\,
  j_{\alpha+1}\cdots j_r} = b^\bp_\bq
\]
for any $i,j = 1, \dots, n$.  Part (b) is the particular case where
$\bp = i_1 \cdots i_{r-1}$, $\bq = j_1 \cdots j_{r-1}$.

(c) This is the same as part (b) of Proposition \ref{prop:GHS}, repeated
here for the sake of convenience.

(d) This follows from parts (b) and (c). By part (c), under these
hypotheses, all terms except $a^{\bp\,i_r}_{\bq\,j_r}$ of the slice
sum $\sum a^{\bp\,i_r}_{\bq\,*} = b^{\bp}_{\bq}$ must be zero.
\end{proof}

\begin{defn}\label{def:restr}
  Assume that $r \ge 1$. Let $\bA \in \E_\Bbbk(n,r)$ be given.  For
  any $\bp$, $\bq \in I(n,r-1)$ let $b^\bp_\bq$ be the common value of
  the slice sums in $\bA$ indexed by $\bp, \bq$. (This is a single
  scalar if $r=1$.)  The matrix $\bB = (b^\bp_\bq)_{\bp,\bq \in
    I(n,r-1)}$ in $\Mat_{I(n,r-1)}(\Bbbk)$ is the \emph{restriction}
  of $\bA$. We write $\rho(\bA) = \bB$ for the restriction.
\end{defn}

Proposition \ref{prop:inv-properties}(b) says that the invariant
$\bA$ is obtained by ``blowing up'' its restriction $\bB = \rho(\bA)$
by a process which replaces each matrix entry $b^\bp_\bq$ of $\bB$ by
an $n \times n$ GDS matrix with row and column sums equal to
$b^\bp_\bq$. Of course, $\bA$ must also be invariant under
place-permutations, so blowing up is not the only consideration.

\begin{prop}\label{prop:restr-is-invariant}
  Let $r \ge 1$, and suppose that $\bA \in \E_\Bbbk(n,r)$. Then the
  restriction $\rho(\bA)$ belongs to $\E_\Bbbk(n,r-1)$. Thus, $\rho$
  is a $\Bbbk$-linear map from $\E_\Bbbk(n,r)$ into $\E_\Bbbk(n,r-1)$.
\end{prop}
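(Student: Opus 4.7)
I plan to prove that $\rho(\bA) = \bB \in \E_\Bbbk(n,r-1)$ by verifying separately each of the three defining conditions from Proposition \ref{prop:GHS}: $\bB$ lies in $S_\Bbbk(n,r-1)$, $H_\Bbbk(n,r-1)$, and $G_\Bbbk(n,r-1)$. Throughout, I will use the characterisation $b^\bp_\bq = \sum_{k} a^{\bp\,k}_{\bq\,k'}$ (the slice sum of $\bA$ at position $r$, which is independent of the choice of $k'$).

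For the symmetric group invariance ($\bB \in S_\Bbbk(n,r-1)$): given $\sigma \in \Sym_{r-1}$, I regard $\sigma$ as an element of $\Sym_r$ fixing position $r$, so that $(\bp\,k)^\sigma = \bp^\sigma\,k$ for $\bp \in I(n,r-1)$. Then $b^{\bp^\sigma}_{\bq^\sigma} = \sum_k a^{\bp^\sigma\,k}_{\bq^\sigma\,k'} = \sum_k a^{(\bp\,k)^\sigma}_{(\bq\,k')^\sigma} = \sum_k a^{\bp\,k}_{\bq\,k'} = b^\bp_\bq$ by $\Sym_r$-invariance of $\bA$ (Proposition \ref{prop:GHS}(c)).

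For the value-type condition ($\bB \in H_\Bbbk(n,r-1)$): if $\vt(\bp) \ne \vt(\bq)$ for $\bp, \bq \in I(n,r-1)$, then for every $k, k'$ the set partitions $\vt(\bp\,k)$ and $\vt(\bq\,k')$ restrict to distinct partitions of $\{1,\dots,r-1\}$, hence $\vt(\bp\,k) \ne \vt(\bq\,k')$. By Proposition \ref{prop:inv-properties}(c), each term $a^{\bp\,k}_{\bq\,k'} = 0$, so $b^\bp_\bq = 0$.

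For the GDS condition ($\bB \in G_\Bbbk(n,r-1)$), which I expect to be the main obstacle: fix $\alpha \in \{1,\dots,r-1\}$ and $\bp', \bq' \in I(n,r-2)$, and write $\bp'_{(i)} \in I(n,r-1)$ for the tuple obtained by inserting $i$ at position $\alpha$. I must show that $\sum_i b^{\bp'_{(i)}}_{\bq'_{(j)}}$ is independent of $j$ and equals $\sum_j b^{\bp'_{(i)}}_{\bq'_{(j)}}$ (independent of $i$). The plan is to expand $\bB$ in terms of $\bA$ and interchange sums:
\[
\sum_{i} b^{\bp'_{(i)}}_{\bq'_{(j)}} = \sum_{i}\sum_{k} a^{(\bp'_{(i)})\,k}_{(\bq'_{(j)})\,k'} = \sum_{k}\left(\sum_{i} a^{(\bp'_{(i)})\,k}_{(\bq'_{(j)})\,k'}\right).
\]
The inner sum is precisely an $\alpha$-slice sum of $\bA$. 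By Proposition \ref{prop:inv-properties}(a), this slice sum equals a scalar depending only on the remaining positions (in particular on $\bp', \bq', k, k'$) but not on $j$; moreover, the row-slice and column-slice values at $\alpha$ coincide. Hence the same interchange applied to $\sum_{j} b^{\bp'_{(i)}}_{\bq'_{(j)}}$ yields the identical quantity $\sum_{k} b^{\bp'\,k}_{\bq'\,k'}$, giving the common row/column slice-sum property required for $\bB$. The delicate point, and the reason for invoking Proposition \ref{prop:inv-properties}(a), is that the slice-sum description of $\bB$ uses position $r$ while the GDS condition to verify is at position $\alpha \ne r$; the uniformity of slice sums across positions is what makes the double sum well-defined and independent of both $i$ and $j$ in the appropriate ways.
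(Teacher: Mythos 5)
Your proof is correct and follows essentially the same strategy as the paper's: express the entries of $\rho(\bA)$ as $r$-slice sums of $\bA$, interchange the order of summation, and invoke the slice-sum equalities of Proposition \ref{prop:GHS}(a) together with Proposition \ref{prop:inv-properties}(a) to identify the result and establish the GDS conditions. The paper treats $r=2$ explicitly with three double-slice computations and then says the general case is ``similar,'' whereas you carry out the general-$r$ computation directly and also spell out the routine checks of conditions (b) and (c), which the paper dismisses as ``easy to see''---a cosmetic but not substantive difference.
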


\begin{proof}
Set $\bB = \rho(\bA)$. If $r=1$ then $\bB$ is just a scalar and there
is nothing to prove, since $\E_\Bbbk(n,0) = \Bbbk$. For the moment, we
assume that $r=2$. Given any $i_1i_2 \in I(n,2)$, we compute the
double-slice sum $\sum a^{i_1i_2}_{**}$ two ways, by applying the
independence in Proposition \ref{prop:inv-properties} and changing
the order of summation:
\begin{align*}
\sum a^{i_1i_2}_{**} &= \sum_{j=1}^n \sum_{j'=1}^n a^{i_1i_2}_{jj'} =
\sum_{j=1}^n b^{i_1}_j = \sum b^{i_1}_*\\
&= \sum_{j'=1}^n \sum_{j=1}^n a^{i_1i_2}_{jj'} =
\sum_{j=1}^n b^{i_2}_{j'} = \sum b^{i_2}_* .
\end{align*}
Thus all the row sums in $\bB$ have a common value. Similarly, by
considering the double-slice sum $\sum a^{**}_{j_1j_2}$ we see that
$\sum b^*_{j_1} = \sum b^*_{j_2}$, for arbitrary $j_1j_2$ in $I(n,2)$,
so all the column sums in $\bB$ have a common value. Finally, for
arbitrary $i_1, j_2 = 1, \dots, n$ we compute the mixed-double-slice
sum $\sum a^{i_1*}_{*j_2}$ two ways:
\begin{align*}
\sum a^{i_1*}_{*j_2} &= \sum_{i=1}^n \sum_{j=1}^n a^{i_1i}_{jj_2} =
\sum_{i=1}^n b^{i}_{j_2} = \sum b^{*}_{j_2}\\
&= \sum_{j=1}^n \sum_{i=1}^n a^{i_1i}_{jj_2} =
\sum_{j=1}^n b^{i_1}_{j} = \sum b^{i_1}_* .
\end{align*}
Thus $\sum b^{*}_{j_2} = \sum b^{i_1}_{*}$. This shows that all the
row sums are equal to all the column sums in $\bB$. In other words,
$\bB$ is GDS, and thus belongs to $\E_\Bbbk(n,1)$. This proves the
result in case $r=2$.

Now assume that $r>2$. We need to show that $\bB$ satisfies the
conditions of Proposition \ref{prop:GHS}(a)--(c). It is easy to see
that conditions (b), (c) hold for $\bB$ since they hold for
$\bA$. Then calculations similar to the above (and place-permutation
invariance) show that $\bB$ satisfies the slice-sum equations in part
(a). Thus $\bB$ belongs to $\E_\Bbbk(n,r-1)$ as claimed.
\end{proof}

Going forward, we will often regard a given $\bA = (a^{i_1
  \cdots i_r}_{j_1 \cdots j_r})_{i_1 \cdots i_r, j_1 \cdots j_r \in
  I(n,r)}$ in $\E_\Bbbk(n,r)$ as the $n \times n$ block matrix
\begin{equation}\label{eq:blocks}
\bA = (\bA^i_j)_{i,j = 1, \dots,n} = 
\begin{bmatrix}
  \bA^1_1 & \cdots & \bA^1_n \\
  \vdots & \ddots & \vdots \\
  \bA^n_1 & \cdots & \bA^n_n
\end{bmatrix}
\end{equation}
where each block $\bA^i_j$ is defined by $\bA^i_j = (a^{i\, i_2 \cdots
  i_r}_{j\, j_2 \cdots j_r})_{i_2 \cdots i_r, j_2 \cdots j_r \in
  I(n,r-1)}$.

\begin{rmk}\label{rmk:block-sums}
The block notation just introduced provides a convenient description
of the restriction map. Given $\bA$ in $\E_\Bbbk(n,r)$, we have
\[
\rho(\bA) = \textstyle \sum \bA^i_* = \sum \bA^*_j
\]
for any $i,j$.  That is, the restriction $\bB=\rho(\bA)$ is the common
value of the block row and column sums in the block matrix
\eqref{eq:blocks}.
\end{rmk}

\begin{example}\label{ex:n23-r2-inv}
We now illustrate how, using only the value-type condition of
Proposition \ref{prop:GHS}(b) we can obtain general forms for the
invariants $\E_\Bbbk(n,r)$ for small values of $n$ and $r$ (we will
use blank entries to denote entries which are zero due to value-type
mismatches).  The general form of invariants in $\E_\Bbbk(2,2)$,
$\E_\Bbbk(3,2)$ are displayed below, respectively:
\[
\setlength{\arraycolsep}{2.7pt}\scriptsize
\begin{array}{{c|cc|cc|}}
  &\rt{11}&\rt{12}&\rt{21}&\rt{22}\\ \hline
  11&*& & &*\\
  12& &*&*& \\ \hline
  21& &*&*& \\
  22&*& & &*\\ \hline
\end{array} \; ,
\qquad
\begin{array}{c|ccc|ccc|ccc|}
  &\rt{11}&\rt{12}&\rt{13}&\rt{21}&\rt{22}&\rt{23}&
  \rt{31}&\rt{32}&\rt{33} \\ \hline
  11&*& &  & &*&  & & &*\\
  12& &*&* &*& &* &*&*& \\
  13& &*&* &*& &* &*&*& \\ \hline
  21& &*&* &*& &* &*&*& \\  
  22&*& &  & &*&  & & &*\\
  23& &*&* &*& &* &*&*& \\ \hline
  31& &*&* &*& &* &*&*& \\
  32& &*&* &*& &* &*&*& \\
  33&*& &  & &*&  & & &*\\  \hline
\end{array}\; .
\]
  The general
form of invariants in $\E_\Bbbk(4,2)$ is displayed below:
\[
\setlength{\arraycolsep}{2.7pt}\scriptsize
\begin{array}{c|cccc|cccc|cccc|cccc|}
  &\rt{11}&\rt{12}&\rt{13}&\rt{14}&\rt{21}&\rt{22}&\rt{23}&\rt{24}&
  \rt{31}&\rt{32}&\rt{33}&\rt{34}&\rt{41}&\rt{42}&\rt{43}&\rt{44}\\ \hline
  11&*& & & & &*& & & & &*& & & & &*\\
  12& &*&*&*&*& &*&*&*&*& &*&*&*&* & \\
  13& &*&*&*&*& &*&*&*&*& &*&*&*&*& \\
  14& &*&*&*&*& &*&*&*&*& &*&*&*&*& \\  \hline
  21& &*&*&*&*& &*&*&*&*& &*&*&*&*&  \\  
  22&*& & & & &*& & & & &*& & & & &*\\
  23& &*&*&*&*& &*&*&*&*& &*&*&*&*& \\  
  24& &*&*&*&*& &*&*&*&*& &*&*&*&*& \\ \hline  
  31& &*&*&*&*& &*&*&*&*& &*&*&*&*& \\
  32& &*&*&*&*& &*&*&*&*& &*&*&*&*& \\
  33&*& & & & &*& & & & &*& & & & &*\\  
  34& &*&*&*&*& &*&*&*&*& &*&*&*&*& \\ \hline
  41& &* &* &* &*& &*&*&*&*& &*&*&*&*& \\
  42& &* &* &* &*& &*&*&*&*& &*&*&*&*& \\
  43& &* &* &* &*& &*&*&*&*& &*&*&*&*& \\
  44&* & & & & &*& & & & &*& & & & &*\\ \hline
\end{array} \;.
\]
In these depictions, starred entries can be non-zero, but they are not
arbitrary, because they must be invariant under place-permutations and
their slice sums must satisfy the GDS conditions in Proposition
\ref{prop:inv-properties}(b).
\end{example}

Let $\bP(w) = \Phi_{n,1}(w)$ be the permutation matrix representing $w
\in W_n$. As a linear endomorphism of $\V$, $\bP(w)$ is the linear map
sending $\bv_j$ to $\bv_{w(j)}$, for $j = 1, \dots, n$. In terms of
matrix coordinates, $\bP(w) = ( \delta_{i, w(j)} )_{i,j = 1,
  \dots, n}$. It follows that
\begin{equation}\label{eq:Kron-power}
\Phi_{n,r}(w) = \bP(w)^{\otimes r} = \big( \delta_{i_1, w(j_1)} \cdots
\delta_{i_r, w(j_r)} \big)_{i_1 \cdots i_r}^{j_1 \cdots j_r}.
\end{equation}
It is easy to compute the restriction of such matrices.

\begin{lem}\label{lem:rho-of-Kron}
  For any $w \in W_n$ and any $r \ge 1$, $\rho(\bP(w)^{\otimes r}) =
  \bP(w)^{\otimes(r-1)}$. In particular, $\rho(\bP(w)) = 1$.
\end{lem}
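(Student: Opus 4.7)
The plan is to compute $\rho(\bP(w)^{\otimes r})$ directly from Definition \ref{def:restr} by evaluating a single slice sum, using the explicit formula \eqref{eq:Kron-power}. First I would note that $\bP(w)^{\otimes r} = \Phi_{n,r}(w)$ lies in the image of $\Phi_{n,r}$, hence commutes with the $\Ptn_r(n)$-action and therefore belongs to $\E_\Bbbk(n,r)$. By Proposition \ref{prop:inv-properties}(a), all slice sums agree, so I am free to pick whichever $\alpha$ makes the bookkeeping easiest.

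Next I would fix $\alpha$ (say $\alpha = r$) together with $\bp = i_1\cdots i_{r-1}$, $\bq = j_1\cdots j_{r-1}$ in $I(n,r-1)$ and any $j \in \{1,\dots,n\}$. By \eqref{eq:Kron-power} the corresponding slice sum is
\[
\sum_{i=1}^n \delta_{i_1,w(j_1)} \cdots \delta_{i_{r-1},w(j_{r-1})} \, \delta_{i,w(j)}
= \left(\prod_{\beta=1}^{r-1} \delta_{i_\beta,w(j_\beta)}\right) \sum_{i=1}^n \delta_{i,w(j)}.
\]
Since $w$ is a bijection, exactly one value of $i$ makes $\delta_{i,w(j)}=1$, so the inner sum equals $1$. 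The slice sum therefore reduces to $\prod_{\beta=1}^{r-1}\delta_{i_\beta,w(j_\beta)}$, which by \eqref{eq:Kron-power} is precisely the $(\bp,\bq)$-entry of $\bP(w)^{\otimes(r-1)}$. This establishes $\rho(\bP(w)^{\otimes r}) = \bP(w)^{\otimes(r-1)}$ for all $r \ge 2$.

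Finally, the case $r=1$ is the same computation with no surviving factors: the slice sum becomes $\sum_{i=1}^n \delta_{i,w(j)} = 1$, giving $\rho(\bP(w)) = 1$, under the convention $\bP(w)^{\otimes 0} = 1 \in \Bbbk = \E_\Bbbk(n,0)$. There is essentially no obstacle here; the only subtlety is noting at the outset that $\bP(w)^{\otimes r} \in \E_\Bbbk(n,r)$, which legitimises the use of Definition \ref{def:restr} and of the $\alpha$-independence in Proposition \ref{prop:inv-properties}(a).
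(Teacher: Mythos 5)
Your proposal is correct and is essentially the same calculation as the paper's, just phrased at the entry level rather than the block level: the paper writes $\bA = \bP(w)^{\otimes r}$ as an $n\times n$ block matrix, observes $\bA^i_j = \delta_{i,w(j)}\,\bP(w)^{\otimes(r-1)}$, and invokes Remark \ref{rmk:block-sums}, whereas you unpack the same Kronecker structure via \eqref{eq:Kron-power} and Definition \ref{def:restr} to compute a single slice sum directly. Both arguments hinge on the observation that summing over one index in $\bP(w)$ yields $1$, and your preliminary check that $\bP(w)^{\otimes r}=\Phi_{n,r}(w)\in \E_\Bbbk(n,r)$ correctly justifies applying the restriction machinery.
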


\begin{proof}
Write $\bA = \bP(w)^{\otimes r}$.  Express $\bA$ as an $n \times n$
block matrix $\bA = (\bA^i_j)$ as in \eqref{eq:blocks}. Then by
\eqref{eq:Kron-power}, $\bA^i_j$ is given by
\[
\bA^i_j = \delta_{i, w(j)} \, \bP(w)^{\otimes (r-1)}, \quad \text{all
} i,j = 1, \dots, n.
\]
The result now follows by Remark \ref{rmk:block-sums}. 
\end{proof}

Since the actions of $W_n$ and $\Ptn_r(n)$ on $\V^{\otimes r}$
commute, we have induced left and right actions of $W_n$ on
$\E_\Bbbk(n,r) = \End_{\Ptn_r(n)}(\V^{\otimes r})$, which are given by
left and right multiplication of the corresponding permutation
matrices. Explicitly,
\begin{equation}\label{eq:W_n-actions}
  (w, \bA) \mapsto \Phi(w) \bA = \bP(w)^{\otimes r} \bA, \quad
  (\bA,w) \mapsto \bA \Phi(w)  = \bA \bP(w)^{\otimes r} 
\end{equation}
defines the left and right actions, respectively. In other words, the
algebra $\E_\Bbbk(n,r)$ is stable under row and column permutations by
$r$th Kronecker powers of $\bP(w)$ for $w \in W_n$.

\section{Description of the invariants $\E_\Bbbk(n,r+\fhalf)$}%
\label{sec:special}\noindent
We now study how the algebra $\E_\Bbbk(n,r+\fhalf)
= \End_{\Ptn_{r+\half}(n)}(\V^{\otimes r})$ is related to
$\E_\Bbbk(n,r) = \End_{\Ptn_r(n)}(\V^{\otimes r})$. Recall that in
this context we identify $\V^{\otimes r}$ with $\V^{\otimes r}
\otimes \bv_n$.  The following terminology is useful for our study.

\begin{defn}\label{def:special}
(i) If $i_1 \cdots i_r \in I(n,r)$, and if $j \in \{1, \dots, n\}$,
let $\Lambda_j(i_1 \cdots i_r)$ be the set of places in which the
value $j$ appears:
\[
\Lambda_j(i_1 \cdots i_r) = \{ \alpha \in \{1, \dots, r\} : i_\alpha =
j \}.
\]
We say that $i_1\cdots i_r$ \emph{contains} $j$ if $\Lambda_j(i_1
\cdots i_r)$ is non-empty.

(ii) Fix $i,j$ such that $1 \le i,j \le n$. Let $\E_\Bbbk(n,r)^i_j$ be
the set of invariants $\bA = (a^{i_1 \cdots i_r}_{j_1 \cdots j_r})$ in
$\E_\Bbbk(n,r)$ satisfying the following condition:
\[
\text{if $a^{i_1 \cdots i_r}_{j_1 \cdots j_r} \ne 0$ then
  $\Lambda_i(i_1 \cdots i_r) = \Lambda_j(j_1 \cdots j_r)$.}
\]
We call elements of any $\E_\Bbbk(n,r)^i_j$ \emph{special
  invariants}. Note that $\E_\Bbbk(n,r)^i_j$ is a $\Bbbk$-module, for
any $i,j$. For any $i = 1, \dots, n$, $\E_\Bbbk(n,r)^i_i$ is an
algebra over $\Bbbk$, the algebra of $\bv_i$-fixing invariants.
\end{defn}

\begin{lem}\label{lem:special-criterion}
  Suppose that $r\ge 1$ and $\bA$ is in $\E_\Bbbk(n,r)$. If all blocks
  except for $\bA^i_j$ in the $ith$ block row and $j$th block column
  are zero blocks, then $\bA$ must be a special invariant in
  $\E_\Bbbk(n,r)^i_j$.
\end{lem}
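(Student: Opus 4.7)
The plan is to unpack the hypothesis in terms of matrix coordinates and then use place-permutation invariance (from Proposition~\ref{prop:GHS}(c)) to bootstrap from the first tensor position to all positions.

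First I would translate the block hypothesis into coordinate form. By the block decomposition in \eqref{eq:blocks}, the block $\bA^{i'}_{j'}$ collects all entries $a^{\bi}_{\bj}$ with $i_1 = i'$ and $j_1 = j'$. The assumption that every block in the $i$th block row and $j$th block column other than $\bA^i_j$ is zero therefore says: whenever $a^{\bi}_{\bj} \neq 0$, one has $i_1 = i \iff j_1 = j$. Equivalently, $1 \in \Lambda_i(\bi) \iff 1 \in \Lambda_j(\bj)$.

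Next I would upgrade this ``first position'' statement to all positions by conjugating with a symmetric group element. Since $\E_\Bbbk(n,r) \subseteq S_\Bbbk(n,r)$, Proposition~\ref{prop:GHS}(c) gives $a^{\bi^\sigma}_{\bj^\sigma} = a^{\bi}_{\bj}$ for every $\sigma \in \Sym_r$. Fix any place $\alpha \in \{1, \dots, r\}$ and take $\sigma = (1,\alpha)$; then the first coordinates of $\bi^\sigma, \bj^\sigma$ are $i_\alpha, j_\alpha$ respectively. Applying the coordinate form of the hypothesis to the entry $a^{\bi^\sigma}_{\bj^\sigma}$ (which is nonzero since $a^\bi_\bj$ is) yields $i_\alpha = i \iff j_\alpha = j$.

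Since $\alpha$ was arbitrary, this shows $\Lambda_i(\bi) = \Lambda_j(\bj)$ whenever $a^{\bi}_\bj \ne 0$, which is exactly the defining condition for $\bA \in \E_\Bbbk(n,r)^i_j$ in Definition~\ref{def:special}(ii). There is no real obstacle here: the only subtlety is keeping the bookkeeping straight, namely that the block row/column hypothesis is precisely a condition on position~1, and that place-permutation invariance is what allows us to transport it to every other position.
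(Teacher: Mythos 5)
Your argument is correct and, like the paper's, is driven by place-permutation invariance together with the coordinate translation of the block hypothesis. The paper's own proof proceeds by a case split (whether or not $\bi$ contains $i$ or $\bj$ contains $j$), first reduces to the first position via place-permutation invariance, and then additionally invokes value-type preservation (Proposition~\ref{prop:GHS}(b)) to propagate the matching from position $1$ to the full sets $\Lambda_i(\bi)$, $\Lambda_j(\bj)$. Your version is a touch more economical: conjugating by the transposition $(1,\alpha)$ for every place $\alpha$ directly yields $i_\alpha = i \iff j_\alpha = j$ from the block hypothesis alone, so you need only Proposition~\ref{prop:GHS}(c), and the paper's two cases collapse into one. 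This is a valid and slightly cleaner route to the same conclusion.
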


\begin{proof}
Suppose $a^{i_1 \cdots i_r}_{j_1\cdots j_r} \ne 0$. We must show
that $\Lambda_i(i_1 \cdots i_r) = \Lambda_j(j_1 \cdots j_r)$.

\emph{Case 1.} Assume that $i_1 \cdots i_r$ contains $i$ or $j_1
\cdots j_r$ contains $j$. Since $\bA$ is invariant under
place-permutations we can assume that $i_1=i$ or $j_1=j$. Then the
hypothesis implies that \emph{both} $i_1=i$ and $j_1=j$. Since $\bA$
preserves value-type, it follows that the places in $i_1 \cdots i_r$
containing $i$ must agree with the places in $j_1 \cdots j_r$
containing $j$, so $\Lambda_i(i_1 \cdots i_r) = \Lambda_j(j_1 \cdots
j_r)$.

\emph{Case 2.} Otherwise, $i_1 \cdots i_r$ does not contain $i$ and
$j_1 \cdots j_r$ does not contain $j$. Again $\Lambda_i(i_1 \cdots
i_r) = \Lambda_j(j_1 \cdots j_r)$, as both sets are empty.
\end{proof}

The next result says in particular that the blocks of any invariant
are always special invariants in the previous degree.

\begin{lem}\label{lem:blocks-are-special}
  Suppose that $r \ge 1$ and that $\bA$ is in $\E_\Bbbk(n,r)$. For any
  fixed $1 \le i,j \le n$, the block matrix $\bA^i_j = (a^{i\, i_2
    \cdots i_r}_{j\,j_2 \cdots j_r})_{i_2 \cdots i_r,\, j_2 \cdots j_r
    \in I(n,r-1)}$ belongs to $\E_\Bbbk(n,r-1)^i_j$, after re-indexing
  its rows and columns via the forgetful map that omits the initial
  term of each multi-index.
\end{lem}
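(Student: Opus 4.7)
The plan is to unpack both assertions — membership in $\E_\Bbbk(n,r-1)$ and membership in the special submodule $\E_\Bbbk(n,r-1)^i_j$ — directly from the characterisation of invariants in Proposition \ref{prop:GHS} together with Definition \ref{def:special}(ii). Fix $1 \le i,j \le n$ and write $\bA^i_j = (c^\bp_\bq)_{\bp,\bq \in I(n,r-1)}$ with $c^\bp_\bq = a^{i\,\bp}_{j\,\bq}$. I would verify conditions (a), (b), (c) of Proposition \ref{prop:GHS} in turn.

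For the value-type condition (b), if $c^\bp_\bq \neq 0$ then $a^{i\bp}_{j\bq}\ne 0$, so $\vt(i\bp)=\vt(j\bq)$ as set partitions of $\{1,\dots,r\}$. Restricting both partitions to $\{2,\dots,r\}$ and relabelling as $\{1,\dots,r-1\}$ yields $\vt(\bp)=\vt(\bq)$. For the place-permutation condition (c), any $\sigma\in\Sym_{r-1}$ extends to $\tilde\sigma\in\Sym_r$ fixing the first position, and $a^{i\bp}_{j\bq}=a^{(i\bp)^{\tilde\sigma}}_{(j\bq)^{\tilde\sigma}}=a^{i\,\bp^\sigma}_{j\,\bq^\sigma}$, giving $c^\bp_\bq = c^{\bp^\sigma}_{\bq^\sigma}$. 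For the slice-sum condition (a), the GDS conditions in Proposition \ref{prop:GHS}(a) for $\bA$ at places $\alpha = 2,\dots,r$ translate directly, after fixing the first coordinate to $(i,j)$, into the corresponding conditions at places $1,\dots,r-1$ for $\bA^i_j$. Hence $\bA^i_j\in\E_\Bbbk(n,r-1)$.

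It remains to check the special condition: if $c^\bp_\bq=a^{i\bp}_{j\bq}\ne 0$, then $\Lambda_i(\bp)=\Lambda_j(\bq)$. Since $\vt(i\bp)=\vt(j\bq)$ by value-type preservation, the block of the common set partition containing position $1$ equals both $\{1\}\cup\{\alpha\ge 2 : i_\alpha=i\}$ (reading from $i\bp$) and $\{1\}\cup\{\alpha\ge 2 : j_\alpha=j\}$ (reading from $j\bq$). Subtracting $1$ to re-index places $\{2,\dots,r\}$ as $\{1,\dots,r-1\}$ gives $\Lambda_i(\bp)=\Lambda_j(\bq)$, which is precisely the condition in Definition \ref{def:special}(ii).

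No step is particularly difficult; the only thing to watch is that the block-row/column indices $(i,j)$ interact correctly with both the value-type preservation (to get the special condition) and the slice sums (to get the GDS conditions at the remaining $r-1$ places). Writing out the value-type identification carefully after restriction is the main technical point, and handling it upfront makes the rest a direct bookkeeping exercise.
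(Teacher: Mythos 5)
Your proof is correct and takes essentially the same approach as the paper. The paper's own proof simply asserts that $\bA^i_j$ satisfies the conditions of Proposition \ref{prop:GHS} as ``clear'' and then notes the implication $\Lambda_i(i\,i_2\cdots i_r)=\Lambda_j(j\,j_2\cdots j_r)\Rightarrow\Lambda_i(i_2\cdots i_r)=\Lambda_j(j_2\cdots j_r)$; you have spelled out the same verification (restricting value-types, extending place-permutations fixing position $1$, and specialising the slice-sum conditions at places $2,\dots,r$) in more detail, but the substance is identical.
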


\begin{proof}
Clearly $\bA^i_j$ belongs to $\E_\Bbbk(n,r-1)$, since it satisfies the
conditions of Proposition \ref{prop:GHS}. Furthermore, the fact that
$\bA$ preserves value-type implies that
\[
\text{ if } a^{i\, i_2 \cdots i_r}_{j\,j_2 \cdots j_r} \ne 0 \text{
  then } \Lambda_i(i_2 \cdots i_r) = \Lambda_j(j_2 \cdots j_r)
\]
since $\Lambda_i(i\, i_2 \cdots i_r) = \Lambda_j(j\,j_2 \cdots j_r)$
by hypothesis.  So $\bA^i_j \in \E_\Bbbk(n,r-1)^i_j$, as
required.
\end{proof}

\begin{lem}\label{lem:half-iso}
$\E_\Bbbk(n,r+\fhalf)$ is isomorphic to $\E_\Bbbk(n,r)^n_n$ (as
  algebras). In particular, $\E_\Bbbk(n,r+\fhalf)$ embeds in
  $\E_\Bbbk(n,r)$.
\end{lem}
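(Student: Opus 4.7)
The strategy is to characterise $\E_\Bbbk(n, r+\fhalf)$ as a subalgebra of $\End_\Bbbk(\V^{\otimes r})$ by working out exactly which endomorphisms of $\V^{\otimes r} \otimes \bv_n$ commute with the $\Ptn_{r+\half}(n)$-action. First, I would observe that $\Ptn_{r+\half}(n)$ contains a copy of $\Ptn_r(n)$, embedded by adjoining the trivial block $\{r+1, (r+1)'\}$ to each diagram of $\Ptn_r(n)$. Under the identification $\bv_\bi \otimes \bv_n \leftrightarrow \bv_\bi$, a direct computation with \eqref{eq:Ptn-action} shows that this embedded copy acts on $\V^{\otimes r} \otimes \bv_n$ exactly as the original $\Ptn_r(n)$ acts on $\V^{\otimes r}$, so restriction already gives an inclusion $\E_\Bbbk(n, r+\fhalf) \subseteq \E_\Bbbk(n,r)$.

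Next, I would determine the additional constraints coming from diagrams which involve position $r+1$ non-trivially. The simplest such are the diagrams whose only non-trivial block is $\{\alpha, r+1, \alpha', (r+1)'\}$ for some $\alpha \in \{1, \dots, r\}$; formula \eqref{eq:Ptn-action} shows that such a diagram acts on $\V^{\otimes r} \otimes \bv_n$ as the projector $E_\alpha$ defined by $E_\alpha(\bv_\bi) = \bv_\bi$ if $i_\alpha = n$ and $E_\alpha(\bv_\bi) = 0$ otherwise. For a general diagram $d \in \Ptn_{r+\half}(n)$, let $S$ be the intersection of the block of $d$ containing $r+1$ with $\{1, \dots, r, 1', \dots, r'\}$. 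The constraint $i_{r+1} = j_{r+1} = n$ then forces every position in $S$ (primed or unprimed) to carry the value $n$, which lets one factor the action of $d$ on the submodule as a composition of projectors $E_\beta$ (one for each element of $S$) with an element of $\Ptn_r(n)$ obtained by restricting $d$ to $\{1, \dots, r, 1', \dots, r'\}$. Consequently, commuting with $\Ptn_{r+\half}(n)$ is the same as commuting with $\Ptn_r(n)$ and with each $E_\alpha$.

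Finally, for $f \in \E_\Bbbk(n,r)$ with matrix $(a^\bi_\bj)$, the commutation $f E_\alpha = E_\alpha f$ forces $a^\bi_\bj = 0$ whenever $\delta_{i_\alpha, n} \ne \delta_{j_\alpha, n}$; imposing this for every $\alpha = 1, \dots, r$ yields $a^\bi_\bj = 0$ unless $\Lambda_n(\bi) = \Lambda_n(\bj)$, which is precisely the condition defining $\E_\Bbbk(n,r)^n_n$ in Definition \ref{def:special}(ii). This identifies $\E_\Bbbk(n, r+\fhalf)$ with $\E_\Bbbk(n,r)^n_n$ as subsets of $\End_\Bbbk(\V^{\otimes r})$, and since the identification is just restriction of scalars from $\End_\Bbbk(\V^{\otimes r})$, it is automatically an algebra isomorphism, yielding the embedding into $\E_\Bbbk(n,r)$. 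The main technical subtlety will be the factorisation in the second paragraph — especially the borderline case $S = \emptyset$, where the diagram contributes no $E_\beta$'s but one must check that the ``restricted'' diagram $d'$ is correctly defined so that the reduction to the joint action of $\Ptn_r(n)$ and the $E_\alpha$ really is faithful.
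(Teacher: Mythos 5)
Your proposal is correct and is, in fact, more complete than the paper's own argument. The paper's proof is terse (``more or less immediate from the definitions''): it identifies the generators of $\Ptn_{r+\half}(n)$ acting on $\V^{\otimes r}\otimes\bv_n$, observes that value-type preservation together with the constraint $i_{r+1}=j_{r+1}=n$ forces the $\Lambda_n(\bi)=\Lambda_n(\bj)$ condition, and then simply asserts that re-indexing gives an invariant in $\E_\Bbbk(n,r)$. That establishes the inclusion $\E_\Bbbk(n,r+\fhalf)\hookrightarrow \E_\Bbbk(n,r)^n_n$; the reverse inclusion (that \emph{every} element of $\E_\Bbbk(n,r)^n_n$ commutes with the full action of $\Ptn_{r+\half}(n)$) is not spelled out. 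Your second paragraph is precisely what fills that gap: the factorisation of the restricted action of a general $d\in\Ptn_{r+\half}(n)$ into projectors $E_\beta$ and a diagram $d'\in\Ptn_r(n)$ is what shows that commuting with $\Ptn_r(n)$ and the $E_\alpha$ implies commuting with everything.

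Your factorisation claim is correct, but you should state it a bit more precisely than ``a composition of projectors $E_\beta$ with an element of $\Ptn_r(n)$,'' since the order of composition matters. With $S$ the intersection of the block of $d$ containing $r+1$ with $\{1,\dots,r,1',\dots,r'\}$, the restricted action is
\[
\Bigl(\prod_{\alpha\in S\ \text{unprimed}} E_\alpha\Bigr)\circ \Psi_{n,r}(d')\circ\Bigl(\prod_{\beta'\in S\ \text{primed}} E_\beta\Bigr),
\]
where $d'$ is $d$ restricted to $\{1,\dots,r,1',\dots,r'\}$; the unprimed indices of $S$ contribute constraints $\delta_{i_\alpha,n}$ on the row side and the primed ones contribute $\delta_{j_\beta,n}$ on the column side, and the remaining constraint that all positions of $S$ carry a common value is already enforced by $d'$, so the factorisation is exact. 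In the borderline case $S=\emptyset$ the block $\{r+1,(r+1)'\}$ is trivial and $d$ is the embedded image of $d'$, so the reduction is immediate. With this made explicit, your argument gives a clean proof of both inclusions, and the identification is an algebra isomorphism for the reason you state. This is a sound and slightly more careful version of the same strategy the paper uses.
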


\begin{proof}
This is more or less immediate from the definitions. Thanks to the
identification of $\V^{\otimes r}$ with $\V^{\otimes r} \otimes
\bv_n$, an invariant in
\[
\E_\Bbbk(n,r+\fhalf) = \End_{\Ptn_{r+\,1/2}(n)}(\V^{\otimes r})
\]
is a $\Bbbk$-linear endomorphism of $\V^{\otimes r} \otimes \bv_n$
commuting with the action of $\Ptn_{r+\,1/2}(n)$. So it must preserve
value-type, which means that it must fix $\bv_n$ in all tensor places,
since it does so in the last place. Also, it is constant on
place-permutation orbits for $\Sym_r$ acting on the first $r$ places,
and satisfies the slice equations in Proposition \ref{prop:GHS}(a) in
all places. If we index rows and columns of the invariant by elements
of $I(n,r)$, by forgetting the last tensor factor (of $\bv_n$), then
we get an invariant in $\E_\Bbbk(n,r)$.
\end{proof}

\begin{example}\label{ex:special-inv}
A special invariant in $\E_\Bbbk(4,2)^4_4 \cong \E_\Bbbk(4,2+\fhalf)$
is of the form
\[
\setlength{\arraycolsep}{2.7pt}\scriptsize
\begin{array}{c|cccc|cccc|cccc|cccc|}
  &\rt{11}&\rt{12}&\rt{13}&\rt{14}&\rt{21}&\rt{22}&\rt{23}&\rt{24}&
  \rt{31}&\rt{32}&\rt{33}&\rt{34}&\rt{41}&\rt{42}&\rt{43}&\rt{44}\\ \hline
  11&*& & & & &*& & & & &*& & & & & \\ 12& &*&*& &*& &*& &*&*& & & & &
  & \\ 13& &*&*& &*& &*& &*&*& & & & & & \\ 14& & & &*& & & &*& & &
  &*& & & & \\ \hline 21& &*&*& &*& &*& &*&*& & & & & & \\ 22&*& & & &
  &*& & & & &*& & & & & \\ 23& &*&*& &*& &*& &*&*& & & & & & \\ 24& &
  & &*& & & &*& & & &*& & & & \\ \hline 31& &*&*& &*& &*& &*&*& & & &
  & & \\ 32& &*&*& &*& &*& &*&*& & & & & & \\ 33&*& & & & &*& & & &
  &*& & & & & \\ 34& & & &*& & & &*& & & &*& & & & \\ \hline 41& & & &
  & & & & & & & & &*&*&*& \\ 42& & & & & & & & & & & & &*&*&*& \\ 43&
  & & & & & & & & & & & &*&*&*& \\ 44& & & & & & & & & & & & & & &
  &*\\ \hline
\end{array}
\]
where all blank positions must be zero, and the starred positions can
be non-zero. This should be compared with Example \ref{ex:n23-r2-inv}.
All special invariants in $\E_\Bbbk(4,2)$ look like this, up to a
reordering of rows and columns.

Notice that deleting the rows and columns indexed by labels containing
4 yields the general form of an invariant in $\E_\Bbbk(3,2)$; see
Example \ref{ex:n23-r2-inv}. This observation motivates Proposition
\ref{prop:n-reduce-iso} below.
\end{example}

The reader may wish to refer to Example \ref{ex:special-inv} when
working through the proof of the next result.

\begin{prop}\label{prop:n-reduce-iso}
Suppose that $n \ge 2$. For any $1 \le p,q \le n$, there is a
$\Bbbk$-linear isomorphism
\[
\E_\Bbbk(n,r)^p_q \xrightarrow{\;\approx\;} \E_\Bbbk(n-1,r)
\]
given by respectively excising all rows, columns labeled by a
multi-index containing $p$, $q$ respectively and re-indexing the sets
$\{1, \dots, p-1, p+1, \dots, n\}$ and $\{1, \dots, q-1, q+1 \dots,
n\}$ to match $\{1, \dots, n-1\}$. In particular, we have a
$\Bbbk$-linear isomorphism
\[
\E_\Bbbk(n,r+\fhalf) \xrightarrow{\;\approx\;} \E_\Bbbk(n-1,r)
\]
given by excising all rows and columns labeled by a multi-index
containing $n$ (with no re-indexing needed).
\end{prop}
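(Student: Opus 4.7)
The plan is to construct and analyze the excision map
\[
\Phi \colon \E_\Bbbk(n,r)^p_q \to \E_\Bbbk(n-1, r), \qquad \Phi(\bA)^{\bi'}_{\bj'} = a^{\phi(\bi')}_{\psi(\bj')},
\]
where $\phi, \psi \colon \{1, \ldots, n-1\} \to \{1, \ldots, n\}$ are the unique order-preserving injections with images $\{1, \ldots, n\} \setminus \{p\}$ and $\{1, \ldots, n\} \setminus \{q\}$ respectively. The second statement of the proposition will follow from the first by applying Lemma \ref{lem:half-iso} with $p = q = n$. Linearity of $\Phi$ is immediate; it remains to prove well-definedness and bijectivity.

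For well-definedness, I verify the three conditions of Proposition \ref{prop:GHS} for $\Phi(\bA)$. Value-type preservation and place-permutation invariance are automatic from the injectivity of $\phi, \psi$ and their compatibility with place-permutations. For the slice-sum condition, the $\Phi(\bA)$-slice sum over $\{1, \ldots, n-1\}$ at a given position equals the corresponding $\bA$-slice sum over $\{1, \ldots, n\}$ minus the extra term at the omitted value $p$ (respectively $q$); this extra term vanishes by the special condition, since inserting $p$ in a row of an entry whose column index (coming from $\psi$) avoids $q$ forces $\Lambda_p(\text{row}) \ne \Lambda_q(\text{col})$. The very same computation establishes the key commutativity $\Phi \circ \rho = \rho \circ \Phi$ on $\E_\Bbbk(n,r)^p_q$, and a short check shows that $\rho$ sends $\E_\Bbbk(n, r)^p_q$ into $\E_\Bbbk(n, r-1)^p_q$.

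For injectivity, suppose $\Phi(\bA) = 0$ and let $(\bi, \bj) \in I(n,r)^2$. If $\Lambda_p(\bi) \ne \Lambda_q(\bj)$ then $a^\bi_\bj = 0$ by the special condition. Otherwise, with $S = \Lambda_p(\bi) = \Lambda_q(\bj)$, place-permute to write $\bi = \bi^\flat p^{|S|}$ and $\bj = \bj^\flat q^{|S|}$, where $\bi^\flat, \bj^\flat$ avoid $p, q$ respectively. Iterated application of Proposition \ref{prop:inv-properties}(d) (stripping $|S|-1$ inner $(p,q)$ pairs) together with a concluding slice-sum reduction at the outermost $p, q$ (the $m \ne p$ terms vanishing by the special condition) yields the uniform formula $a^\bi_\bj = \rho^{|S|}(\bA)^{\bi^\flat}_{\bj^\flat}$, trivial when $|S| = 0$. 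This is an excised entry of $\rho^{|S|}(\bA) \in \E_\Bbbk(n, r-|S|)^p_q$, and by the commutativity it equals $\rho^{|S|}(\Phi(\bA))^{\phi^{-1}(\bi^\flat)}_{\psi^{-1}(\bj^\flat)} = 0$.

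For surjectivity, given $\bA' \in \E_\Bbbk(n-1, r)$ I define $\bA$ by the formula suggested by the injectivity argument: set $a^\bi_\bj = 0$ if $\Lambda_p(\bi) \ne \Lambda_q(\bj)$, and otherwise $a^\bi_\bj = \rho^{|S|}(\bA')^{\phi^{-1}(\bi^\flat)}_{\psi^{-1}(\bj^\flat)}$, with notation as above. This is well-defined on $\Sym_r$-orbits, and $\Phi(\bA) = \bA'$ holds tautologically. The main obstacle will be verifying that $\bA \in \E_\Bbbk(n,r)^p_q$: value-type preservation, the special condition, and place-permutation invariance are immediate from the construction, but the slice-sum condition of Proposition \ref{prop:GHS}(a) requires a case analysis according to whether the summation index $m$ takes the value $p$ (in the row) or the fixed transverse index takes the value $q$ (in the column). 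The contributions at $m = p$ or transverse value $q$ either vanish by the special condition or collapse, via one further restriction, to combine with the sum over the remaining $n-1$ values --- itself a slice sum in $\rho^{|S|}(\bA')$ --- giving the common value $\rho^{|S|+1}(\bA')^{\phi^{-1}(\bp^\flat)}_{\psi^{-1}(\bq^\flat)}$ independent of the transverse index.
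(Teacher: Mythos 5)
Your proof is correct but takes a genuinely different route from the paper's. The paper's proof also uses the excision map $\eta^p_q$, but constructs its inverse $\theta^p_q$ by induction on $r$: for $r=1$ the blow-up is given by an explicit formula using the common row/column sum $s(\bC)$, and for $r>1$ the map $\theta_r$ is defined block-by-block by recursively applying $\theta_{r-1}$ and filling the $(n,n)$ block with the block-sum $\mathbf{S}$; the verification that $\theta$ and $\eta$ are mutually inverse is then left to the reader. Your argument instead proves injectivity and surjectivity directly, exploiting iterated restriction to obtain the closed-form description $a^{\bi}_{\bj} = \rho^{|S|}(\bA)^{\bi^\flat}_{\bj^\flat}$ (with $S = \Lambda_p(\bi) = \Lambda_q(\bj)$) for any special invariant, which you then use both to show $\ker\Phi = 0$ and to write down a preimage. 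The advantage of your route is that it makes the inverse map fully explicit in a single global formula, and it incidentally establishes the intertwining relation $\Phi\circ\rho = \rho\circ\Phi$, which the paper records separately as Lemma \ref{lem:theta-rho-commute} (also with proof left to the reader). The paper's inductive-on-$r$ block construction, by contrast, tracks more cleanly which data in each degree determine the next, which is the bookkeeping the authors need later for free patterns. One small warning: when you invoke ``a short check shows that $\rho$ sends $\E_\Bbbk(n,r)^p_q$ into $\E_\Bbbk(n,r-1)^p_q$'' you must not appeal to Proposition \ref{prop:special-ext-form}, since part (b) of that result already quotes the present proposition; the check should instead be done directly from the special condition and Remark \ref{rmk:block-sums} (namely, the special condition forces all blocks $\bA^p_j$, $j\ne q$, and $\bA^i_q$, $i\ne p$, to vanish, whence $\rho(\bA) = \bA^p_q$, which is special by Lemma \ref{lem:blocks-are-special}). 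Your well-definedness verification is carried out at the right level of detail; the surjectivity case analysis is sketched but the cases you name (summation index $m=p$ vs.\ transverse index $q$) are exactly the ones that matter, and the collapsing you describe does occur.
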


\begin{proof}
We first prove the special case in which $p=q=n$. Suppose that $\bA
\in \E_\Bbbk(n,r)^n_n$. We obtain a corresponding invariant
$\eta(\bA) \in \E_\Bbbk(n-1,r)$ by excising all rows and columns of
$\bA$ indexed by a label containing $n$. This defines a $\Bbbk$-linear
map $\eta: \E_\Bbbk(n,r)^n_n \to \E_\Bbbk(n-1,r)$.

For the opposite direction, suppose that $\bC \in \E_\Bbbk(n-1,r)$ is
given. We define a $\Bbbk$-linear map $\theta_r: \E_\Bbbk(n-1,r) \to
\E_\Bbbk(n,r)^n_n$ by induction on $r$, holding $n$ fixed. If $r=1$,
we set $\theta_1(\bC) = \bA = (a^i_j)_{i,j = 1, \dots, n}$, where
\[
a^i_j = 
\begin{cases}
  c^i_j & \text{ if $i \ne n$ and $j \ne n$} \\
  s & \text{ if $i = n$ and $j = n$} \\
  0 & \text{ otherwise.}
\end{cases}
\]
Here, $s$ is the common value of the row and column sums in the given
GDS matrix $\bC$.  If $r>1$, we regard $\bC = (\bC^i_j)_{i,j = 1,
  \dots, n-1}$ as an $(n-1) \times (n-1)$ block matrix, where each
$\bC^i_j = (b^{i\, i_2 \cdots i_r}_{j\, j_2 \cdots j_r})_{i_2 \cdots
  i_r, j_2 \cdots j_r \in I(n-1,r)}$, and then we set $\theta_r(\bC) =
\bA = (\bA^i_j)_{i,j = 1, \dots, n}$, again as a block matrix, where
the blocks $\bA^i_j = (a^{i\, i_2 \cdots i_r}_{j\,j_2 \cdots j_r})_{i_2
  \cdots i_r, j_2 \cdots j_r \in I(n,r-1)}$ are given by
\[
\bA^i_j =
\begin{cases}
  \theta_{r-1}(\bC^i_j) & \text{ if $i \ne n$ and $j \ne n$} \\
  \mathbf{S} & \text{ if $i=n$ and $j=n$} \\
  \bzero & \text{ otherwise.}
\end{cases}
\]
Here, $\mathbf{S}$ is the common value of the block sum of the first
$n-1$ block rows and columns in $\bA$; that is, $\mathbf{S} = \sum
\bA^*_j = \sum \bA^i_*$, for any $i, j = 1, \dots,
n-1$. Alternatively, $\mathbf{S} = \theta_{r-1}(\mathbf{S}')$, where
$\mathbf{S}' = \sum \bC^i_* = \sum \bC^*_j$ for any $i,j = 1, \dots,
n-1$.

Having defined $\theta = \theta_r$, we claim that $\theta$ is a two-sided
inverse of $\eta$, so $\eta$ is the desired isomorphism (and
$\eta^{-1} = \theta$). Details are left to the reader.  This proves
the special case.  The general case, for arbitrary $1 \le p,q \le n$,
follows from the special case by re-indexing (interchange $n$ with $p$
and $q$ in row and column indices, respectively). 
\end{proof}

\begin{rmk}\label{rmk:eta-theta}
For general $1 \le p,q \le n$, whenever necessary we
will denote the $\Bbbk$-linear isomorphisms $\eta$, $\theta$ in the
above proof by $\eta^p_q$, $\theta^p_q$ respectively. 
\end{rmk}

Lemma \ref{lem:blocks-are-special} tells us that blocks of any
invariant are always special invariants. If the given invariant is
itself special, then we can be more precise about the nature of its
blocks. Let $\ov{p}$ be the image of $p$ under the renumbering
bijection $\{1, \dots, i-1, i+1, \dots, n\} \cong \{1, \dots, n-1\}$;
similarly $\ov{q}$ is the image of $q$ under $\{1, \dots, j-1, j+1,
\dots, n\} \cong \{1, \dots, n-1\}$.

\begin{prop}\label{prop:special-ext-form}
   If $\bA \in \E_\Bbbk(n,r)^i_j$ is a special invariant then with
   $\eta = \eta^i_j$ we have:
   \begin{enumerate}
   \item $\bA^i_q = \bzero$ for any $q \ne j$ and $\bA^p_j = \bzero$ for
     any $p \ne i$.
   \item If $p \ne i$ and $q \ne j$ then $\eta(\bA^p_q) \in
     \E_\Bbbk(n-1,r-1)^{\ov{p}}_{\ov{q}}$\,. 
   \item $\rho(\bA) = \bA^i_j$. Thus $\rho(\bA) \in
     \E_\Bbbk(n,r-1)^i_j$\,.
   \end{enumerate}
\end{prop}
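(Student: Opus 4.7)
The plan is to prove (a) directly from the definition of a special invariant, deduce (c) quickly from (a), and prove (b) by a commutation argument that identifies $\eta^i_j(\bA^p_q)$ with a single block of $\eta^i_j(\bA)$.

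\emph{For (a).} I would inspect a typical entry $a^{i\,i_2\cdots i_r}_{q\,j_2\cdots j_r}$ of the block $\bA^i_q$, where $q\ne j$. Since the first row coordinate is $i$, we have $1\in\Lambda_i(i\,i_2\cdots i_r)$; since the first column coordinate is $q\ne j$, we have $1\notin\Lambda_j(q\,j_2\cdots j_r)$. Hence $\Lambda_i\ne\Lambda_j$, and the defining condition of $\E_\Bbbk(n,r)^i_j$ forces the entry to be $0$. The case $\bA^p_j=\bzero$ for $p\ne i$ is symmetric.

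\emph{For (c).} By (a), the $i$th block row of $\bA$ collapses to a single summand: $\sum_q \bA^i_q = \bA^i_j$. Remark~\ref{rmk:block-sums} identifies this common block row sum with $\rho(\bA)$, so $\rho(\bA)=\bA^i_j$. Lemma~\ref{lem:blocks-are-special} applied to the block $\bA^i_j$ then places it in $\E_\Bbbk(n,r-1)^i_j$, as claimed.

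\emph{For (b).} The key point is that the operations ``take the $(p,q)$-block'' and ``apply $\eta^i_j$'' commute provided $p\ne i$ and $q\ne j$. Unwinding the definitions, both $\eta^i_j(\bA^p_q)$ and $(\eta^i_j(\bA))^{\ov{p}}_{\ov{q}}$ are matrices indexed (after the re-indexing via $\sigma_i,\sigma_j$) by $I(n-1,r-1)\times I(n-1,r-1)$, whose entries are precisely the $a^{p\,i_2\cdots i_r}_{q\,j_2\cdots j_r}$ for which $i_2\cdots i_r$ avoids $i$ and $j_2\cdots j_r$ avoids $j$; the hypothesis $p\ne i$, $q\ne j$ is exactly what prevents the first coordinate from being excised inside $\eta^i_j(\bA)$. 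Thus $\eta^i_j(\bA^p_q)=(\eta^i_j(\bA))^{\ov{p}}_{\ov{q}}$. Proposition~\ref{prop:n-reduce-iso} gives $\eta^i_j(\bA)\in\E_\Bbbk(n-1,r)$, and Lemma~\ref{lem:blocks-are-special} applied to its $(\ov{p},\ov{q})$-block yields the desired conclusion $\eta^i_j(\bA^p_q)\in\E_\Bbbk(n-1,r-1)^{\ov{p}}_{\ov{q}}$.

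The main obstacle will be the multi-index bookkeeping underlying part (b); the identity $\eta^i_j(\bA^p_q)=(\eta^i_j(\bA))^{\ov{p}}_{\ov{q}}$ is visually transparent but requires careful tracking of how the renaming bijections $\sigma_i$, $\sigma_j$ interact with the block decomposition. Parts (a) and (c) are essentially immediate from the defining condition of special invariants together with Remark~\ref{rmk:block-sums} and Lemma~\ref{lem:blocks-are-special}.
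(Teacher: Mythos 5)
Your proposal is correct and follows essentially the same route as the paper: (a) directly from the defining condition on $\Lambda_i,\Lambda_j$, (c) from (a) together with Remark~\ref{rmk:block-sums} and Lemma~\ref{lem:blocks-are-special}, and (b) by applying Proposition~\ref{prop:n-reduce-iso} and Lemma~\ref{lem:blocks-are-special} to $\eta(\bA)$. The only difference is that you make explicit the identity $\eta^i_j(\bA^p_q)=(\eta^i_j(\bA))^{\ov{p}}_{\ov{q}}$ for $p\ne i$, $q\ne j$, which the paper leaves implicit in its one-line justification of (b); that is a useful clarification rather than a deviation.
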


\begin{proof}
 (a) Clear from the definition of $\E_\Bbbk(n,r)^i_j$.

 (b) This follows from Proposition \ref{prop:n-reduce-iso} and Lemma
   \ref{lem:blocks-are-special}, applied to $\eta(\bA)$.

 (c) Set $\bB = \rho(\bA)$. By Remark \ref{rmk:block-sums}, all the
   block row and column sums of $\bA$ are equal to $\bB$. Thus, by part
   (a), we have $\bB = \bA^i_j$. The last statement in (c) then follows
   by Lemma \ref{lem:blocks-are-special}.
\end{proof}

Note that Proposition \ref{prop:special-ext-form} is well illustrated
by Example \ref{ex:special-inv}.

We finish this section with the following observation. 

\begin{lem}\label{lem:theta-rho-commute}
  With $\theta = \theta^i_j$ and $\eta = \eta^i_j$ the
  following diagram commutes:
   \[
   \begin{tikzcd}[column sep = large]
   \E_\Bbbk(n-1,r) \arrow{r}{\theta} \arrow{d}{\rho}
   & \E_\Bbbk(n,r)^i_j \arrow{d}{\rho}\\
   \E_\Bbbk(n-1,r-1) \arrow{r}{\theta}& \E_\Bbbk(n,r-1)^i_j 
   \end{tikzcd} \; .
   \]
   In other words, the restriction map $\rho$ commutes with $\theta$.
   (It also commutes with $\eta = \theta^{-1}$.)
\end{lem}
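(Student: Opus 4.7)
The plan is to reduce to the case $i=j=n$, since by Remark~\ref{rmk:eta-theta} the general maps $\theta^i_j$ and $\eta^i_j$ differ from $\theta^n_n$, $\eta^n_n$ only by a re-indexing of rows and columns that clearly commutes with the restriction map $\rho$. So I will assume $i=j=n$ and prove $\rho\circ\theta_r = \theta_{r-1}\circ\rho$ on $\E_\Bbbk(n-1,r)$ by induction on $r$.

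The key ingredient is Proposition~\ref{prop:special-ext-form}(c), which says that for any special invariant $\bA \in \E_\Bbbk(n,r)^n_n$ the restriction $\rho(\bA)$ equals the $(n,n)$-block $\bA^n_n$. I will apply this to $\bA = \theta_r(\bC)$ for $\bC \in \E_\Bbbk(n-1,r)$; this $\bA$ is special by Lemma~\ref{lem:special-criterion}, because the $n$th block row and column of $\bA$ vanish outside position $(n,n)$ by construction. By the inductive clause in the definition of $\theta_r$, we have $\bA^n_n = \mathbf{S} = \theta_{r-1}(\mathbf{S}')$, where $\mathbf{S}'$ is the common block row/column sum of $\bC$, and by Remark~\ref{rmk:block-sums} this $\mathbf{S}'$ is exactly $\rho(\bC)$. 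Chaining these identities gives
\[
\rho(\theta_r(\bC)) = \bA^n_n = \theta_{r-1}(\mathbf{S}') = \theta_{r-1}(\rho(\bC)),
\]
which is the desired commutativity. The base case $r=1$ reduces to an immediate inspection of the formula for $\theta_1$: both $\rho(\theta_1(\bC))$ and $\rho(\bC)$ equal the common row/column sum $s$ of $\bC$.

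The parenthetical statement for $\eta = \theta^{-1}$ is formal: substituting $\bC = \eta(\bA)$ into $\rho\circ\theta = \theta\circ\rho$ and applying $\eta$ yields $\eta\circ\rho = \rho\circ\eta$. I do not foresee a genuine obstacle; the lemma is essentially built into the inductive recipe for $\theta$, and the only nontrivial input, namely the identity $\rho(\bA)=\bA^i_j$ for special $\bA$, has already been isolated in Proposition~\ref{prop:special-ext-form}(c). The mildest subtlety is bookkeeping when the general $(i,j)$ is reduced to $(n,n)$, but this is purely notational since the re-indexing bijections commute entry-wise with taking block row/column sums.
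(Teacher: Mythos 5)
Your argument is correct, and since the paper's own proof is simply ``Left to the reader,'' there is nothing to contrast it against; you have supplied exactly the kind of argument the authors intended, built on Proposition~\ref{prop:special-ext-form}(c), the inductive clause $\bA^n_n = \mathbf{S} = \theta_{r-1}(\mathbf{S}')$ in the definition of $\theta_r$, and Remark~\ref{rmk:block-sums}. One small remark on presentation: you announce an induction on $r$, but your chain $\rho(\theta_r(\bC)) = \bA^n_n = \theta_{r-1}(\mathbf{S}') = \theta_{r-1}(\rho(\bC))$ never actually invokes the inductive hypothesis $\rho\circ\theta_{r-1} = \theta_{r-2}\circ\rho$; the argument is already direct for any $r\ge 2$, and $r=1$ is a separate base computation. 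You could drop the induction scaffolding entirely. Also, the appeal to Lemma~\ref{lem:special-criterion} is redundant: $\theta_r$ is asserted by Proposition~\ref{prop:n-reduce-iso} to land in $\E_\Bbbk(n,r)^n_n$, so $\bA=\theta_r(\bC)$ is special by that proposition without any further verification. Neither point is a gap; the proof stands.
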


\begin{proof}
Left to the reader.
\end{proof}

Restriction $\rho: \E_\Bbbk(n,r) \to \E_\Bbbk(n,r-1)$ gives a way of
obtaining invariants in degree $r-1$ from invariants in degree $r$
(for $r\ge 1$). The opposite problem is the \emph{extension problem:}
\begin{quote}
  Given $\bB$ in $\E_\Bbbk(n,r-1)$, find some $\bA$ in $\E_\Bbbk(n,r)$
  such that $\rho(\bA) = \bB$.
\end{quote}
A closely related problem is the \emph{decomposition problem:}
\begin{quote}
  Given $\bA$ in $\E_\Bbbk(n,r)$, write $\bA$ as a sum of special
  invariants.
\end{quote}
We will prove in Theorem \ref{thm:properties} ahead that both problems
can always be solved. We show now that this implies the main result of
this paper.

\begin{thm}\label{thm:main1}
  Let $\Bbbk$ be a commutative ring. For any $n \ge 2$, $r \ge
  1$ the maps $\Phi_{n,r}: \Bbbk W_n \to \E_\Bbbk(n,r)$ and
  $\Phi_{n,r+\half}: \Bbbk W_{n-1} \to \E_\Bbbk(n,r+\fhalf)$ are
  surjective.
\end{thm}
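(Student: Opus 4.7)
I would proceed by induction on $n$, treating parts~(1) and~(2) in tandem. The base case is $n=1$ (an auxiliary case lying outside the range stated in the theorem): there $\V^{\otimes r}$ has rank~$1$, $W_1$ is trivial, and $\Phi_{1,r}$ is the identity map $\Bbbk \to \Bbbk$, hence surjective. Now assume the theorem holds for $n-1 \ge 1$.

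First I would deduce part~(2) for $(n,r)$ from part~(1) for $(n-1,r)$. Composing the isomorphisms of Lemma~\ref{lem:half-iso} and Proposition~\ref{prop:n-reduce-iso} gives a $\Bbbk$-linear isomorphism
\[
\E_\Bbbk(n, r+\fhalf) \;\xrightarrow{\;\cong\;}\; \E_\Bbbk(n-1, r).
\]
For $w \in W_{n-1}$, regarded as an element of $W_n$ fixing $n$, $\Phi_{n, r+\half}(w) = \bP(w)^{\otimes r}$ acts on $\V^{\otimes r} \otimes \bv_n$ fixing the last factor, and a direct check using the descriptions of $\eta^n_n$ and $\theta^n_n$ shows that its image under the displayed isomorphism is $\Phi_{n-1,r}(w)$. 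Hence surjectivity of $\Phi_{n,r+\half}$ is equivalent to surjectivity of $\Phi_{n-1,r}$, which holds by hypothesis.

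For part~(1) with $n \ge 2$, given $\bA \in \E_\Bbbk(n,r)$, I would argue in three steps. Step~1: apply the forthcoming decomposition part of Theorem~\ref{thm:properties} to write $\bA = \sum_{i,j=1}^{n} \bA_{ij}$ with $\bA_{ij} \in \E_\Bbbk(n,r)^{i}_{j}$. Step~2: for each pair $(i,j)$, choose $u_i, u_j \in W_n$ with $u_i(i)=n$ and $u_j(j)=n$, and form
\[
\bA'_{ij} \;:=\; \bP(u_i)^{\otimes r} \, \bA_{ij} \, \bP(u_j^{-1})^{\otimes r}.
\]
Using the commuting $W_n$-actions of~\eqref{eq:W_n-actions} and unwinding Definition~\ref{def:special}, one verifies that $\bA'_{ij} \in \E_\Bbbk(n,r)^{n}_{n}$. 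Step~3: apply $\eta^n_n$; by the inductive hypothesis (part~(1) for $(n-1,r)$), $\eta^n_n(\bA'_{ij}) = \sum_{w} c_w \bP(w)^{\otimes r}$ for some coefficients $c_w \in \Bbbk$ indexed by $w \in W_{n-1}$. A direct verification shows that $\theta^n_n(\bP(w)^{\otimes r}) = \bP(\tilde{w})^{\otimes r}$, where $\tilde{w} \in W_n$ is the extension of $w$ fixing $n$; hence $\bA'_{ij} = \sum_w c_w \bP(\tilde{w})^{\otimes r}$ lies in the image of $\Phi_{n,r}$. Undoing the conjugation of Step~2 and summing over $(i,j)$ then writes $\bA$ as a $\Bbbk$-linear combination of Kronecker powers $\bP(w')^{\otimes r}$ with $w' \in W_n$, as required.

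The main obstacle is Theorem~\ref{thm:properties} itself, i.e.\ the existence of the decomposition of an arbitrary invariant into special ones. This is a substantive combinatorial statement which presumably requires a delicate analysis of the slice-sum, value-type and place-permutation conditions of Propositions~\ref{prop:GHS} and~\ref{prop:inv-properties}, together with the block structure in~\eqref{eq:blocks}. Once that decomposition is in hand, the inductive argument above is essentially formal, relying only on the structural isomorphisms $\eta^n_n$, $\theta^n_n$ and the $W_n$-equivariance built into the construction.
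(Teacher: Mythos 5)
Your proposal follows essentially the same route as the paper: reduce part~(2) to part~(1) for $(n-1,r)$ via Proposition~\ref{prop:n-reduce-iso}, then prove part~(1) by inducting on $n$ and using the decomposition of Theorem~\ref{thm:properties} into special invariants, each of which is handled via $\eta$/$\theta$ (or, equivalently, by conjugating by a suitable $\bP(u)^{\otimes r}$) and the inductive hypothesis. One small imprecision in your Step~1: Property~2 in the paper yields a decomposition indexed by a \emph{single} subscript along a chosen block row or column, namely $\bA = \bA(1)+\cdots+\bA(n)$ with $\bA(j) \in \E_\Bbbk(n,r)^n_j$ for fixed $i=n$, not a double sum $\sum_{i,j}\bA_{ij}$ over all pairs $(i,j)$; fixing $i=n$ and running your Steps~2--3 on each $\bA(j)$ gives exactly the paper's argument. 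Your Step~3 usefully spells out the fact (implicit in the paper) that $\theta^n_j$ carries $\bP(w)^{\otimes r}$, $w\in W_{n-1}$, to $\bP(\tilde w)^{\otimes r}$ for the extension $\tilde w\in W_n$ sending $j\mapsto n$, which is what makes $\theta^n_j\eta^n_j\bA(j)$ land in the image of $\Phi_{n,r}$.
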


\begin{proof}
By Proposition \ref{prop:n-reduce-iso}, the surjectivity of
$\Phi_{n,r+\half}$ follows from the surjectivity of $\Phi_{n-1,r}$, so
it suffices to prove the surjectivity of $\Phi_{n,r}$. This
surjectivity is trivial if $r=0$ since $\E_\Bbbk(n,r) \cong \Bbbk$. We
proceed by induction on $n$.  Let $\bA \in \E_\Bbbk(n,r)$. By Theorem
\ref{thm:properties}, we can write
\[
\bA = \bA(1) + \cdots + \bA(n)
\]
where $\bA(j)$ is in $\E_\Bbbk(n,r)^n_j$ for each $j = 1, \dots,
n$. By Proposition \ref{prop:n-reduce-iso} and the inductive
hypothesis, each $\eta^n_j \bA(j)$ belongs to the image of
$\Phi_{n-1,r}$, for $j = 1, \dots, n$. This implies that each $\bA(j)
= \theta^n_j \eta^n_j \bA(j)$ belongs to the image of
$\Phi_{n,r}$. Hence so does $\bA$, and the proof is complete.
\end{proof}

\begin{rmk}\label{rmk:alt-pf}
Another proof of Theorem \ref{thm:main1} is based on the existence of
extensions (also proved in Theorem \ref{thm:properties}). First note
that it is easy to prove Theorem \ref{thm:main1} if $n=r$, because
after all we just need to solve the equation
\[
\bA = \textstyle \sum_{w \in W_n} x_w \Phi_{n,r}(w). 
\]
When $n=r$ the equation has at most one solution, given by setting
\[
x_w = a_{12\cdots n}^{w(1)w(2)\cdots w(n)} \quad \text{for each $w \in W_n$}.
\]
This works because only one permutation $w$ can contribute to that
entry in the matrix $\bA$.  It is not difficult to check that this is
actually a solution.  It follows immediately (given the existence of
extensions) that the ``same'' linear combination
\[
\bA' = \sum_{w \in W_n} x_w \Phi_{n,r+1}(w)
\]
is the unique extension of $\bA$ in $\E_\Bbbk(n,n+1)$. Repeating the
argument inductively, we see that Theorem \ref{thm:main1} holds for
all $r \ge n$.

Assume now that $r<n$, which is the difficult case. The existence of
extensions implies in particular that the restriction map $\rho$ is
always surjective.  Thus the commutative diagram
\[
\begin{tikzcd}
\E_\Bbbk(n,n) \arrow[r, "\rho^{n-r}"] & \E_\Bbbk(n,r) \\
\Bbbk W_n \arrow[u, "\Phi_{n,n}"] \arrow[ur, "\Phi_{n,r}"']
\end{tikzcd}
\]
expresses the map $\Phi_{n,r}$ as the composite of two surjections,
hence it is a surjection, and the theorem is proved. The idea behind
this proof is: given $\bA$ in $\E_\Bbbk(n,r)$ we first extend up to
degree $n$, where we can read off a solution, and then restrict it
back down to degree $r$.
\end{rmk}

\begin{cor}\label{cor:ker-of-rho}
  The kernel of restriction $\rho: \E_\Bbbk(n,r) \to \E_\Bbbk(n,r-1)$
  is isomorphic to $\Phi_{n,r}(\ker \Phi_{n,r-1}) \cong (\ker
  \Phi_{n,r-1})/(\ker \Phi_{n,r})$.
\end{cor}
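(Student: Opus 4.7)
The plan is to exploit the commutative square
\[
\begin{tikzcd}[column sep=large]
\Bbbk W_n \arrow[r,"\Phi_{n,r}"] \arrow[rd,"\Phi_{n,r-1}"'] & \E_\Bbbk(n,r) \arrow[d,"\rho"] \\
& \E_\Bbbk(n,r-1)
\end{tikzcd}
\]
whose commutativity is immediate from Lemma \ref{lem:rho-of-Kron}: for any $w \in W_n$,
\[
\rho(\Phi_{n,r}(w)) = \rho(\bP(w)^{\otimes r}) = \bP(w)^{\otimes (r-1)} = \Phi_{n,r-1}(w),
\]
and this extends $\Bbbk$-linearly to all of $\Bbbk W_n$. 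By Theorem \ref{thm:main1}, both $\Phi_{n,r}$ and $\Phi_{n,r-1}$ are surjective, and this surjectivity is essentially all we need.

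First I would show $\ker \rho = \Phi_{n,r}(\ker \Phi_{n,r-1})$. The inclusion $\supseteq$ is immediate from the commutative diagram: if $\Phi_{n,r-1}(x) = 0$, then $\rho(\Phi_{n,r}(x)) = \Phi_{n,r-1}(x) = 0$. For the reverse inclusion, given $\bA \in \ker\rho$, surjectivity of $\Phi_{n,r}$ (Theorem \ref{thm:main1}) yields $x \in \Bbbk W_n$ with $\Phi_{n,r}(x) = \bA$; then $\Phi_{n,r-1}(x) = \rho(\bA) = 0$, so $x \in \ker \Phi_{n,r-1}$, whence $\bA \in \Phi_{n,r}(\ker \Phi_{n,r-1})$.

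Next, observe that commutativity of the diagram gives the containment $\ker \Phi_{n,r} \subseteq \ker \Phi_{n,r-1}$, since $\Phi_{n,r}(x) = 0$ forces $\Phi_{n,r-1}(x) = \rho(\Phi_{n,r}(x)) = 0$. Hence $\ker \Phi_{n,r} \cap \ker \Phi_{n,r-1} = \ker \Phi_{n,r}$. Applying the first isomorphism theorem to the restriction of $\Phi_{n,r}$ to the submodule $\ker \Phi_{n,r-1} \subseteq \Bbbk W_n$, whose image is $\Phi_{n,r}(\ker \Phi_{n,r-1})$ and whose kernel is $\ker \Phi_{n,r}$, yields
\[
\Phi_{n,r}(\ker \Phi_{n,r-1}) \cong (\ker \Phi_{n,r-1})/(\ker \Phi_{n,r}).
\]

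There is no real obstacle here: the corollary is a formal consequence of Theorem \ref{thm:main1} together with the trivial observation from Lemma \ref{lem:rho-of-Kron} that $\rho$ intertwines the maps $\Phi_{n,r}$ and $\Phi_{n,r-1}$. All the substance is packaged into the surjectivity statement proved in the main theorem.
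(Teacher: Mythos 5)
Your proof is correct and takes essentially the same approach as the paper: commutativity of the triangle (via Lemma \ref{lem:rho-of-Kron}) together with surjectivity of $\Phi_{n,r}$ from Theorem \ref{thm:main1} gives $\ker\rho = \Phi_{n,r}(\ker\Phi_{n,r-1})$, and the first isomorphism theorem applied to $\Phi_{n,r}|_{\ker\Phi_{n,r-1}}$ gives the quotient description. You are in fact slightly more careful than the paper: you correctly observe that only surjectivity of $\Phi_{n,r}$ is needed, whereas the paper also invokes surjectivity of $\rho$ (from Remark \ref{rmk:alt-pf}), which is superfluous for the diagram chase.
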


\begin{proof}
Since $\Phi_{n,r}$ is surjective (by Theorem \ref{thm:main1}) and
$\rho$ is surjective (Remark \ref{rmk:alt-pf}), the commutativity of
the diagram
\[
\begin{tikzcd}
\E_\Bbbk(n,r) \arrow[r, "\rho"] & \E_\Bbbk(n,r-1) \\
\Bbbk W_n \arrow[u, "\Phi_{n,r}"] \arrow[ur, "\Phi_{n,r-1}"']
\end{tikzcd}
\]
implies that $\ker \rho \cong \Phi_{n,r}(\ker \Phi_{n,r-1})$. 
\end{proof}

In \cite{BDM:second}, the authors find an explicit cellular basis for
the kernel of $\Phi_{n,r}$ for all $n,r$. This means that we have an
explicit basis for the kernel of $\rho$ in Corollary
\ref{cor:ker-of-rho}.

\section{Extensions and decompositions}\label{sec:extensions}\noindent
It remains to prove the existence of extensions and
decompositions. This is the purpose of Sections \ref{sec:extensions}
and \ref{sec:proof}. 

First we discuss the extension problem. Let $\bB$ be a given fixed
invariant in $\E_\Bbbk(n,r-1)$. Suppose that $\bA \in \E_\Bbbk(n,r)$
is an extension of the given $\bB$, and write $\bA =
(a^\bi_\bj)_{\bi,\bj \in I(n,r)}$. Then the matrix coordinates
$a^\bi_\bj$ of $\bA$ must satisfy the conditions:
\begin{align}
  & a^{(i_1\cdots i_{r-1} i_{r-1})^\sigma}_{(j_1\cdots j_{r-1}
    j_{r-1})^\sigma} = b^{i_1\cdots i_{r-1}}_{j_1\cdots j_{r-1}} \text{ for
      all } \sigma \in \Sym_r \, , \tag{I-1} \\
  & a^{i_1\cdots i_r}_{j_1\cdots j_r} = 0 \text{ whenever }
  \vt(i_1\cdots i_r) \ne \vt(j_1 \cdots j_r) \hspace{2cm}\tag{I-2}
\end{align}
for all $i_1\cdots i_{r-1}$, $j_1\cdots j_{r-1}$ in $I(n,r-1)$ and all
$i_1\cdots i_r$, $j_1\cdots j_r$ in $I(n,r)$.

Condition (I-1) comes from Proposition \ref{prop:inv-properties}(d),
while condition (I-2) restates value-type preservation, from
Proposition \ref{prop:GHS}(b) (also Proposition
\ref{prop:inv-properties}(c)).  We call (I-1), (I-2)
\emph{initialisation conditions}. They determine the value of all
entries $a^\bi_\bj = a^{i_1 \cdots i_r}_{j_1\cdots j_r}$ for which
either $\#(\bi)<r$ or $\#(\bj)<r$, where we define $\#(\bi)$ to be the
number of distinct values appearing in the multi-index $\bi$. 

Thus, to find an extension $\bA$ of the given $\bB$, we start by
initialising $\bA \in \Mat_{I(n,r)}(\Bbbk)$ to satisfy (I-1),
(I-2). Then it only remains to assign values to the $a^{i_1 \cdots
  i_r}_{j_1 \cdots j_r}$ for which $\#(i_1 \cdots i_r) = r = \#(j_1
\cdots j_r)$.  In other words, if we set
\[
I'(n,r) = \{\bi \in I(n,r): \#(\bi) = r\}
\]
then we can focus just on how to assign entries $a^\bi_\bj$ such that
$\bi,\bj \in I'(n,r)$. By Proposition \ref{prop:GHS}, those entries of
$\bA$ must satisfy the following conditions.  For any $\bp = i_1
\cdots i_{\alpha-1} i_{\alpha+1} \cdots i_r$, $\bq = j_1 \cdots
j_{\alpha-1} j_{\alpha+1} \cdots j_r$ in $I'(n,r-1)$,
\begin{align}
\textstyle \sum a^{i_1\cdots i_{\alpha-1} \, i
  \, i_{\alpha+1}\cdots i_r}_{j_1\cdots j_{\alpha-1} \,*\,
  j_{\alpha+1}\cdots j_r} = b^\bp_\bq \text{ for all } i = 1, \dots,
n\label{eq:slice-row} \\
\textstyle \sum a^{i_1\cdots i_{\alpha-1} \,*\, i_{\alpha+1}\cdots
  i_r}_{j_1\cdots j_{\alpha-1} \, j \, j_{\alpha+1}\cdots j_r} =
b^\bp_\bq \text{ for all } j = 1, \dots,
n \label{eq:slice-col}\\
\intertext{and for all $\bi = i_1 \cdots i_r$,
  $\bj = j_1 \cdots j_r$ in $I'(n,r)$,}
a^{i_1 \cdots i_r}_{j_1 \cdots j_r} - a^{(i_1 \cdots
  i_r)^\sigma}_{(j_1 \cdots j_r)^\sigma} = 0 \text{ all } \sigma \in
\Sym_r \label{eq:stability}.
\end{align}
So finding an extension $\bA$ of the given $\bB$ is (after
initialisation) equivalent to solving the linear system given by
equations \eqref{eq:slice-row}--\eqref{eq:stability}.

Any special invariant $\bA$ in $\E_\Bbbk(n,r)^i_j$ is necessarily an
extension of its block $\bA^i_j$. This follows from Proposition
\ref{prop:special-ext-form}(a) and Remark \ref{rmk:block-sums}. So we
sometimes refer to such special invariants as \emph{special
  extensions}.

Our goal now is to establish the following four interrelated
properties, the first of which is about the existence of
extensions. They will be established by an interleaved double
induction on $n,r$. Note that that each property is based on some
fixed (but arbitrary) block row or column.

\begin{property}[extension property]
  For any $\bB$ in $\E_\Bbbk(n,r-1)$, there exists some $\bA$ in
  $\E_\Bbbk(n,r)$ such that $\rho(\bA) = \bB$. More precisely, for any
  fixed $1\le i \le n$ (respectively, $1 \le j \le n$), there exist
  $\bA(j)$ (resp., $\bA(i)$) in $\E_\Bbbk(n,r)^i_j$ such that $\bA =
  \bA(1) + \cdots + \bA(n)$. Call such an $\bA$ an $i$th block row
  (resp., $j$th block column) extension of $\bB$.
\end{property}

A priori, it is not clear that every extension can be constructed as a
block row or column extension. However, the next property guarantees
that every extension so arises.

\begin{property}[decomposition property]
  For any given $\bA$ in $\E_\Bbbk(n,r)$ and any fixed $1\le i \le n$
  (respectively, $1 \le j \le n$), there exist $\bA(j)$ (resp.,
  $\bA(i)$) in $\E_\Bbbk(n,r)^i_j$ such that $\bA = \bA(1) + \cdots +
  \bA(n)$. Call such a decomposition an $i$th block row (resp., $j$th
  block column) decomposition.
\end{property}

Property 1 for $(n,r)$ immediately implies Property 2 for $(n,r-1)$;
this follows from Remark \ref{rmk:block-sums}, because if $\bA$ is in
$\E_\Bbbk(n,r)$ and $\rho(\bA)=\bB$ then the sum of any chosen block
row or column of $\bA$ is equal to $\bB$.  It should be noted however
that we work in the opposite direction: we need Property 2 for
$(n,r-1)$ as an inductive hypothesis in order to prove Property 1 for
$(n,r)$.

Now we introduce free patterns, which index a set of entries in a
matrix which can be freely assigned to arbitary values in the ring
$\Bbbk$.  Free patterns correspond to a choice of free variables in
the solution of a consistent linear system.  Once the free pattern
entries have been assigned, the system has a unique solution.

\begin{property}[free patterns for extensions]
  For any fixed index $1\le i \le n$ (respectively, $1 \le j \le n$),
  there exists a subset $F(n,r)$ of $I'(n,r) \times I'(n,r)$, possibly
  empty and depending on the index, such that for any $\bB$ in
  $\E_\Bbbk(n,r-1)$ and any assignment $f: F(n,r) \to \Bbbk$ there
  exists a unique $\bA$ in $\E_\Bbbk(n,r)$ with $\rho(\bA) = \bB$.
\end{property}

\begin{rmk}
We often identify the elements of $F(n,r)$ with the matrix entries
they index. 
\end{rmk}

It follows from the symmetry in equations
\eqref{eq:slice-row}--\eqref{eq:stability} or from the existence of
the actions in \eqref{eq:W_n-actions} that if $F(n,r)$ is a given free
pattern, then by applying an arbitrary permutation $y\in W_n$ to all
its row or column indices, we obtain another free
pattern. Furthermore, by interchanging row and column indices in
$F(n,r)$, i.e., transposing, we obtain another free pattern.

\begin{property}[free patterns for decompositions]
  For any given $\bA$ in $\E_\Bbbk(n,r)$ and any fixed $1\le i \le n$
  (respectively, $1 \le j \le n$), there exists a subset $D(n,r)$ of
  $\{1, \dots, n\} \times I'(n,r) \times I'(n,r)$ such that for any
  given assignment $f: D(n,r) \to \Bbbk$ there exist unique $\bA(j)$
  (resp., $\bA(i)$) in $\E_\Bbbk(n,r)^i_j$ such that $\bA = \bA(1) +
  \cdots + \bA(n)$.
\end{property}

Properties 3, 4 are refinements of Properties 1, 2 (respectively) that
precisely quantify the amount of freedom in constructing solutions to
their underlying linear systems.

Suppose that $\bA$ is in $\E_\Bbbk(n,r)$. Fix $i$ and consider its
$i$th block row $\bA^i_* = (\bA^i_j)_{j=1}^n$. By Lemma
\ref{lem:blocks-are-special}, each block $\bA^i_j$ of $\bA^i_*$ gives
an invariant $\bB(j)$ in $\E_\Bbbk(n,r-1)^i_j$, and the correspondence
is given by
\begin{equation}\label{eq:label-iso}
  a^{i\, i_2 \cdots i_r}_{j\,j_2 \cdots j_r} = b(j)^{i_2 \cdots i_r}_{j_2 \cdots j_r}
\end{equation}
as $j$ runs from $1$ to $n$. We let $\pi^i$ be the bijection between
the set of pairs $(i\, i_2 \cdots i_r, j\,j_2 \cdots j_r)$ indexing
entries of $\bA^i_*$ on the left hand side of \eqref{eq:label-iso} and
the set of triples $(j, i_2 \cdots i_r, j_2 \cdots j_r)$ indexing
entries on the right hand side. Similarly, transposing rows and
columns and $i,j$ we obtain a bijection $\pi_j$ between the set of
pairs indexing entries of the $j$th block column $\bA^*_j$ and a
corresponding set of triples.  The following lemma is immediate from
Remark \ref{rmk:block-sums}.

\begin{lem}\label{lem:bijections}
Let $\bA \in \E_\Bbbk(n,r)$ and set $\bB = \rho(\bA) \in
\E_\Bbbk(n,r-1)$.  The map $\pi^i$ (resp., $\pi_j$) is a bijection
between a set of labels for the entries of $\bA^i_*$ (resp.,
$\bA^*_j$) and a set of labels for a solution $\bB = \bB(1)+ \cdots +
\bB(n)$ to the decomposition problem in degree $r-1$.
\end{lem}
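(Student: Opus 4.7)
The plan is to directly unwind the definition of $\pi^i$ and apply Remark \ref{rmk:block-sums}. First I would fix $i \in \{1,\dots,n\}$ and note that, by Lemma \ref{lem:blocks-are-special}, for each $j=1,\dots,n$ the block $\bA^i_j$ yields (after the forgetful re-indexing that drops the leading coordinate of each multi-index) an element $\bB(j) \in \E_\Bbbk(n,r-1)^i_j$, with entries given by the formula \eqref{eq:label-iso}. This identification is precisely the content of the map $\pi^i$: it carries the pair-label $(i\, i_2 \cdots i_r,\, j\, j_2 \cdots j_r)$ for an entry of $\bA^i_*$ to the triple-label $(j,\, i_2 \cdots i_r,\, j_2 \cdots j_r)$ that records which summand $\bB(j)$ the entry lives in and where in that summand. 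Since $\pi^i$ merely relocates the leading column coordinate $j$ from inside the column multi-index to a separate ``block'' coordinate, while leaving everything else unchanged, it is evidently a bijection between its source and target label sets.

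Next I would check that $\{\bB(j)\}_{j=1}^n$ genuinely constitutes a decomposition of $\bB$ in the sense of Property 2. By Remark \ref{rmk:block-sums} applied to $\bA$, the restriction satisfies
\[
\bB = \rho(\bA) = \sum_{j=1}^n \bA^i_j,
\]
i.e.\ $\bB$ is the $i$th block-row sum of $\bA$. Translating this identity through the relabeling \eqref{eq:label-iso} yields $\bB = \bB(1) + \cdots + \bB(n)$ with each summand $\bB(j)$ lying in $\E_\Bbbk(n,r-1)^i_j$, which is exactly an $i$th block row decomposition of $\bB$ in degree $r-1$. Thus under $\pi^i$, the data of the entries of $\bA^i_*$ corresponds bijectively to the data of a decomposition of $\bB$.

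The argument for $\pi_j$ is entirely parallel: one replaces the $i$th block row sum by the $j$th block column sum $\bB = \sum_{i=1}^n \bA^i_j$ (again provided by Remark \ref{rmk:block-sums}), and reads off the block column decomposition $\bB = \bB(1) + \cdots + \bB(n)$ with $\bB(i) \in \E_\Bbbk(n,r-1)^i_j$. There is no substantive obstacle: the lemma is a pure bookkeeping translation between two equivalent labelings of the same set of matrix entries, and all the mathematical content has already been established in Lemma \ref{lem:blocks-are-special} and Remark \ref{rmk:block-sums}.
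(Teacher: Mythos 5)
Your argument is correct and follows exactly the route the paper intends: the paper's proof is the one-liner ``immediate from Remark \ref{rmk:block-sums},'' and you have simply spelled out that remark (giving $\bB = \sum_j \bA^i_j$) together with Lemma \ref{lem:blocks-are-special} (placing each block in $\E_\Bbbk(n,r-1)^i_j$) and the definition of $\pi^i$ as the coordinate relocation in \eqref{eq:label-iso}. Nothing is missing and no alternative ideas are introduced.
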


Now suppose that $F(n,r)$ exists, in other words, that the free
extension pattern for constructing $\bA$ (from $\bB$) in Lemma
\ref{lem:bijections} exists (note that $F(n,r)$ is necessarily based
on a designated block row or column).  If $F(n,r)$ is based on the
$i$th block row (resp., $j$th block column) then we define
\begin{equation}\label{eq:F'-def}
  F'(n,r) = \{(i_1 \cdots i_r, j_1 \cdots j_r) \in F(n,r): i_1 = i\ 
  (\text{resp.},\ j_1 = j)\}.
\end{equation}
Furthermore, we define $F''(n,r)=F(n,r) \setminus F'(n,r)$, so that
\begin{equation}\label{eq:crux}
F(n,r) = F'(n,r) \sqcup F''(n,r).
\end{equation}
This disjoint decomposition is the crux of the interleaved induction
that will prove Properties 1--4.  Of great importance for our results
is the fact that the map $\pi^i$ (resp., $\pi_j$) in Lemma
\ref{lem:bijections} restricts to a bijection
\[
F'(n,r)\cong D(n,r-1),
\]
where the righthand-side is the free decomposition pattern for writing
$\bB = \bB(1)+ \cdots + \bB(n)$ in Lemma \ref{lem:bijections}.  In
particular, this means that Property 3 for $(n,r)$ immediately implies
Property 4 for $(n,r-1)$.  However, our induction proceeds in the
reverse direction, using the inverse of the above bijection to
construct $F'(n,r)$ from $D(n,r-1)$. Then we construct $F''(n,r)$ and
glue them together according to \eqref{eq:crux} in order to obtain
$F(n,r)$.  In this manner, we explicitly construct and interrelate the
free extension patterns and free decomposition patterns.

\begin{defn}
From now on we order $I(n,r)$ lexicographically, and we do the same
for the row and column indices of any matrix in
$\Mat_{I(n,r)}(\Bbbk)$.  A free pattern $F(n,r)$ is \emph{row-initial}
(resp., \emph{column-initial}) if, after identifying free-pattern
elements with their corresponding matrix entries, free-pattern entries
precede all other entries in each row (resp., column)
slice. Similarly, it is \emph{row-terminal} (resp.,
\emph{column-terminal}) if, after identifying free-pattern elements
with matrix entries, free-pattern entries come after all other entries
in each row (resp., column) slice.
\end{defn}

It is evident that free patterns $F(n,1)$ exist. Indeed, it is easily
checked that
\[
F(n,1) = \{(i,j): 2 \le i,j \le n\}
\]
is a row- and column-terminal pattern for $(n,1)$. To see this,
notice that once an assignment $f:F(n,1) \to \Bbbk$ has been chosen,
there is a unique extension $\bA$ in $\E_\Bbbk(n,1)$ of any given $b
\in \E_\Bbbk(n,0) = \Bbbk$ satisfying $a^i_j = f(i,j)$ for all $(i,j)
\in F(n,1)$. Note that $a^1_i$ and $a^i_1$ are then uniquely forced
for $ i>1 $. The first entry $a^1_1$ is forced in two ways, but
easily seen to be well-defined.

By applying appropriate permutations to the rows and/or columns of
$F(n,1)$ above, one gets row- and column-terminal, row-initial and
column-terminal, and row-terminal and column-initial free patterns for
$(n,1)$. Our proof of the four basic properties in the next section
will show that these variations of free patterns always exist, for any
$(n,r)$. Note that the distinguished element $w_0 \in W_n$ given by
$w_0(j) = n+1-j$ for $j = 1, \dots, n$ interchanges initial and
terminal patterns.

We conclude this section with the following algorithm,
which (as we show in Corollary \ref{cor:colouring}) determines a
terminal (respectively, initial) free pattern in a randomly chosen row
or column of an extension $\bA$ in $\E_\Bbbk(n,r)$ of a given $\bB$ in
$\E_\Bbbk(n,r-1)$, assuming that it is the first such row or column to
be completed. The algorithm is independent of Properties 1--4, and
independent of the inductive proof of those properties.

\begin{algorithm}\label{algo}
The $\alpha$-slices of $I'(n,r)=\{\bi \in I(n,r): \#(\bi)=r\}$ are the
subsets
\[
\{i_1 \cdots i_{\alpha-1}\, i\, i_{\alpha+1}\cdots i_r: i = 1, \dots,
n\}
\]
for $\alpha = 1, \dots, r$.  We start by listing the elements of
$I'(n,r)$ in lexicographical order.  We recursively assign a colour (0
or 1) to each element of $I'(n,r)$ as follows.

As long as uncoloured elements exist, we find the largest
(respectively, smallest) uncoloured element, colour it, and repeat.
Colouring an element consists of the following two steps:
\begin{enumerate}\renewcommand{\theenumi}{\roman{enumi}}
\item Examine all the element's slices. If the element is the only
  uncoloured element in one of its slices, we say it is forced, and
  colour it 0. Otherwise, colour the element 1 to indicate that it is
  free.
\item If colouring the current element forces any additional elements
  in any of its slices, then colour those elements as well. (This
  happens when there is just one remaining uncoloured element in the
  slice, after the current element is coloured.)
\end{enumerate}
We note that colouring is a recursive process, because of (ii). Let
$I'_1(n,r)$ be the set of elements of $I'(n,r)$ coloured 1 by the
above procedure.
\end{algorithm}

\begin{example}
By always choosing the largest uncoloured element, for the case of
$I'_1(5,2)$ the algorithm produces the following colouring:
\[
  I'_1(5,2): \quad
 \setlength{\arraycolsep}{2.7pt}\scriptsize
\begin{array}{c|cccc|cccc|cccc|cccc|cccc|}
  &\rt{12}&\rt{13}&\rt{14}&\rt{15}&\rt{21} &\rt{23}&\rt{24}&\rt{25}
  &\rt{31}&\rt{32} &\rt{34}&\rt{35}&\rt{41}&\rt{42}&\rt{43}  &\rt{45} 
    &\rt{51}&\rt{52}&\rt{53}  &\rt{54}   \\ \hline
       &0&0&0&0	&0&0&1&1	 &0&1&1&1	&0&1&1&1 &0&1&1&1
       \\ 
     \hline
\end{array}\; .
\]
Thus we have $I'_1(5,2) = \{54, 53, 52, 45, 43, 42, 35, 34, 32, 25, 24\}$.
\end{example}

Let $\bB$ in $\E_\Bbbk(n,r-1)$ be given, and fix some index $\bi$
(respectively, $\bj$) in $I'(n,r)$. Then any assignment
  \[
  f: \{ a^\bi_\bj : \bj \in I'_1(n,r)\} \to \Bbbk \qquad (resp., f: \{
  a^\bi_\bj : \bi \in I'_1(n,r)\} \to \Bbbk )
  \]
determines the $\bi$th row $a^\bi_*$ (resp., $\bj$th column $a^*_\bj$)
of a matrix $\bA$ satisfying equations \eqref{eq:slice-row} (resp.,
\eqref{eq:slice-col}) with respect to the given $\bB$.

\begin{rmk}\label{rmk:promise}
(i) In Corollary \ref{cor:colouring} we will show that that, under
suitable inductive hypotheses, there exists an extension $\bA$ of the
given $\bB$ which agrees with the row or column determined by the
above algorithm.

(ii) We prefer to work with terminal free patterns, because they are
compatible with restriction, in the sense that by excising all indices
containing an $n$ we obtain a free pattern for $n-1$. This preference
pervades all of the examples and some of the proofs in the next
section.
\end{rmk}

\section{Proof of Properties 1--4}\label{sec:proof}\noindent
We remind the reader that $\Bbbk$ is an arbitrary commutative
ring. Now we are ready to start the inductive proof of the four
properties. For each property, we need to assume an earlier instance
of one or more properties. We begin with Property 1.

%
%

\begin{prop}\label{prop:extend}
Assume Property 1 for $(n-1,r)$.  Then:
\begin{enumerate}
\item For any $i$, $j$ with $1 \le i,j \le n$ and any given $\bB$ in
  $\E_\Bbbk(n,r-1)^i_j$, there exists some $\bA$ in
  $\E_\Bbbk(n,r)^i_j$ such that $\rho(\bA) = \bB$.

\item
  \begin{enumerate}
  \item Fix some $j$ in $\{1, \dots, n\}$. Suppose given
    the data
  \[\bB(1), \dots, \bB(n)\] with $\bB(i)$ in
  $\E_\Bbbk(n,r-1)^i_j$ for $i = 1, \dots, n$. Then there exist
  corresponding $\bA(i)$ in $\E_\Bbbk(n,r)^i_j$ for $i = 1, \dots, n$
  such that $\rho(\bA(i)) = \bB(i)$ for all $i$. 

  \item Similarly, fix some $i$ in $\{1, \dots, n\}$. Suppose
    given the data
  \[\bB(1), \dots, \bB(n)\] with 
  $\bB(j)$ in
  $\E_\Bbbk(n,r-1)^i_j$ for $j = 1, \dots, n$. Then there exist
  corresponding $\bA(j)$ in $\E_\Bbbk(n,r)^i_j$ for $j = 1, \dots, n$
  such that $\rho(\bA(j)) = \bB(j)$ for all $j$. 

  \item In either case (i) or (ii), the sum $\bA(1) + \cdots + \bA(n)$
    extends $\bB(1) + \cdots + \bB(n)$.
  \end{enumerate}
  
\item If in addition Property 2 holds for $(n,r-1)$ then Property 1
  holds for $(n,r)$.
\end{enumerate}
\end{prop}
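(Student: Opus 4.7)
The plan is to exploit the isomorphism $\theta^i_j : \E_\Bbbk(n-1,r) \xrightarrow{\;\sim\;} \E_\Bbbk(n,r)^i_j$ from Proposition \ref{prop:n-reduce-iso} together with its compatibility with restriction (Lemma \ref{lem:theta-rho-commute}) to reduce each of (a), (b), (c) to a single invocation of either Property 1 at $(n-1,r)$ or Property 2 at $(n,r-1)$. Nothing deeper than these packaged tools should be needed.

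For (a), given $\bB \in \E_\Bbbk(n,r-1)^i_j$, first transport down via $\eta^i_j$ to obtain $\eta^i_j(\bB) \in \E_\Bbbk(n-1,r-1)$. Property 1 for $(n-1,r)$ then supplies an extension $\bA' \in \E_\Bbbk(n-1,r)$ with $\rho(\bA') = \eta^i_j(\bB)$. Set $\bA := \theta^i_j(\bA') \in \E_\Bbbk(n,r)^i_j$; Lemma \ref{lem:theta-rho-commute} and Remark \ref{rmk:eta-theta} give
\[
\rho(\bA) = \rho(\theta^i_j(\bA')) = \theta^i_j(\rho(\bA')) = \theta^i_j \eta^i_j(\bB) = \bB,
\]
which is exactly what is required. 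For (b), simply apply (a) to each $\bB(i)$ (respectively each $\bB(j)$) individually to produce the corresponding $\bA(i)$ (resp.\ $\bA(j)$); the final assertion that the two sums match under $\rho$ is just $\Bbbk$-linearity of restriction, namely $\rho(\sum_\alpha \bA(\alpha)) = \sum_\alpha \rho(\bA(\alpha)) = \sum_\alpha \bB(\alpha)$.

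For (c), start with an arbitrary $\bB \in \E_\Bbbk(n,r-1)$ and a chosen index $i$ (respectively $j$). Property 2 for $(n,r-1)$ decomposes $\bB = \bB(1) + \cdots + \bB(n)$ into special pieces $\bB(j) \in \E_\Bbbk(n,r-1)^i_j$ (resp.\ $\bB(i) \in \E_\Bbbk(n,r-1)^i_j$). Apply part (b) to lift this to pieces $\bA(j)$ (resp.\ $\bA(i)$) in the matching $\E_\Bbbk(n,r)^i_j$, and set $\bA = \sum_\alpha \bA(\alpha)$. By the last clause of (b), $\rho(\bA) = \bB$, and by construction $\bA$ is a block row (resp.\ column) extension in the sense required by Property 1 at $(n,r)$.

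The proposition itself presents no serious obstacle once the reduction isomorphism $\theta^i_j$ and its commutation with $\rho$ are in hand; the argument is essentially formal. The substantive work is externalised into the inductive hypotheses: (i) Property 1 at $(n-1,r)$, handled by an outer induction on $n$, and (ii) Property 2 at $(n,r-1)$, whose availability at the moment we need it is the genuinely delicate point. Ensuring that Property 2 at $(n,r-1)$ can be fed in before Property 1 at $(n,r)$ is what forces the interleaved inductive scheme, and is precisely the role of the free-pattern machinery of the ensuing propositions.
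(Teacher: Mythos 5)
Your proof is correct and follows essentially the same route as the paper's: part (a) via the isomorphism of Proposition \ref{prop:n-reduce-iso} together with Lemma \ref{lem:theta-rho-commute}, part (b) by applying (a) piecewise plus linearity of $\rho$, and part (c) by combining Property 2 for $(n,r-1)$ with part (b). You merely spell out the commuting-square computation that the paper leaves implicit; the content is identical.
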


\begin{proof}
(a) This follows from Proposition \ref{prop:n-reduce-iso}, which
  reduces the question to the problem of extending from
  $\E_\Bbbk(n-1,r-1)$ to $\E_\Bbbk(n-1,r)$, which is solved by the
  hypothesis.

(b) is immediate from part (a) and the linearity of $\rho$.

(c) Let $\bB$ in $\E_\Bbbk(n,r-1)$ be given. By the decomposition
  property for $(n,r-1)$, we can find $\bB(j)$ in
  $\E_\Bbbk(n,r-1)^n_j$ for $j = 1, \dots, n$ such that $\bB = \bB(1)
  + \cdots + \bB(n)$. By part (b)(ii), there exist corresponding
  $\bA(j)$ in $\E_\Bbbk(n,r)^n_j$ such that $\rho(\bA(j)) = \bB(j)$
  for all $j = 1, \dots, n$. Put $\bA = \bA(1) + \cdots +
  \bA(n)$. Then by linearity of $\rho$ it follows that $\rho(\bA) =
  \bB$, as required. This shows that $\bA$ is a last block row
  extension of $\bB$. The proof for any other block row or column is
  similar.
\end{proof}

Having dealt with the existence of extensions, we now consider the
question of their uniqueness.

\begin{lem}\label{lem:rho-injects}
  Suppose that $n \le r$. Then
  \begin{enumerate}
  \item Extensions (if any) from $\E_\Bbbk(n,r-1)$ to $\E_\Bbbk(n,r)$
    are unique.
  \item Restriction $\rho: \E_\Bbbk(n,r) \to \E_\Bbbk(n,r-1)$ is
    injective.
  \item $\bzero$ is the only extension in $\E_\Bbbk(n,r)$ of $\bzero$.
  \end{enumerate}
\end{lem}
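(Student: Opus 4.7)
The three statements (a), (b), (c) are equivalent via the linearity of $\rho$: (a) and (b) are the same assertion, (a) trivially implies (c), and (c) implies (a) because any two extensions of a common $\bB$ differ by an extension of $\bzero$. My plan is therefore to prove (c): if $\bA \in \E_\Bbbk(n,r)$ satisfies $\rho(\bA) = \bzero$, then $\bA = \bzero$.

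The first step is to observe that when $\bB = \bzero$, the initialisation conditions (I-1), (I-2) together force $a^\bi_\bj = 0$ whenever $\#(\bi) < r$ or $\#(\bj) < r$. Indeed, if $\vt(\bi) \ne \vt(\bj)$ then (I-2) kills the entry directly; if $\vt(\bi) = \vt(\bj)$ and $\bi$ has a repeated value, place-permutation invariance (Proposition \ref{prop:GHS}(c)) lets us move a pair of equal entries into the last two positions of both $\bi$ and $\bj$ simultaneously, and then (I-1) equates the entry to an entry of $\bB = \bzero$. If $n < r$, this already finishes the argument, since every $\bi \in I(n,r)$ satisfies $\#(\bi) \le n < r$.

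The remaining case is $n = r$, where $I'(n,r)$ consists exactly of the permutations of $(1, \dots, n)$. For $\bi, \bj \in I'(n,r)$, I would apply the row slice-sum equation of Proposition \ref{prop:GHS}(a) at position $\alpha = 1$:
\[
0 = b^{i_2 \cdots i_r}_{j_2 \cdots j_r} = \sum_{i=1}^{n} a^{i\, i_2 \cdots i_r}_{j_1\, j_2 \cdots j_r}.
\]
Since $\bi$ is a permutation of $(1, \dots, n)$, every value $i \ne i_1$ already appears among $i_2, \dots, i_r$, so the row index $i\, i_2 \cdots i_r$ has a repeated entry and the corresponding term vanishes by the previous step. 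Only the summand $i = i_1$ survives, forcing $a^\bi_\bj = 0$, and hence $\bA = \bzero$. I do not anticipate any real obstacle; the hypothesis $n \le r$ is used precisely to guarantee that each slice-sum contains at most one index in $I'(n,r)$, which is exactly what allows the initialisation cascade to collapse the system to the trivial solution. (This also pinpoints why the statement fails for $n > r$, where several terms of each slice-sum may remain undetermined.)
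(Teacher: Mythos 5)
Your proposal is correct and follows essentially the same route as the paper's own proof: use value-type preservation and place-permutation invariance to handle all entries $a^\bi_\bj$ where $\bi$ (equivalently $\bj$) has a repeated value, then, in the borderline case $n=r$, invoke a single slice-sum equation and note that only one term of the sum can be nonzero. The differences are cosmetic — you reduce (a) and (b) to (c) by linearity before arguing, and you sum over position $\alpha = 1$, whereas the paper proves (a) directly and sums over position $\alpha = r$ (justifying the collapse of the slice sum via value-type rather than via the repeated-value step, but this amounts to the same observation).
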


\begin{proof}
(a) Suppose that $\bA$ in $\E_\Bbbk(n,r)$ extends $\bB$ in
  $\E_\Bbbk(n,r-1)$. We use Proposition \ref{prop:inv-properties}. Let
  $\bi = i_1 \cdots i_r$, $\bj = j_1 \cdots j_r$ be in $I(n,r)$. If
  $\vt(\bi) \ne \vt(\bj)$ then by Proposition
  \ref{prop:inv-properties}(c), $a^\bi_\bj = 0$.

  So assume for the rest of the proof that $\vt(\bi) = \vt(\bj)$. If
  $\bi$ has a repeated value, then by Proposition
  \ref{prop:inv-properties}(d), $a^\bi_\bj = b^\bp_\bq$, where $\bp,
  \bq$ are obtained from $\bi, \bj$ by removing one of the duplicate
  values (from the same place). So such entries of $\bA$ are
  determined by $\bB$. If $r>n$, then all multi-indices in $I(n,r)$
  must have at least one duplicate value, so we are done in that case.

  We are left with the case $r=n$ and $\#(\bi) = n$ ($\bi$ has no
  repeated values). Then the same is true of $\bj$, and by Proposition
  \ref{prop:inv-properties}(b), with $\bp = i_1 \cdots i_{n-1}$ and
  $\bq = j_1 \cdots j_{n-1}$ we have
  \[
  \textstyle \sum a^{\bp\,*}_{\bq\,j_r} = b^\bp_\bq .
  \]
  Exactly $n-1$ of the possible $n$ values from $\{1, \dots, n\}$
  appear in $\bi$; similarly for $\bj$. Hence at most one term in the
  above sum can be non-zero, because of value-type, so $a^\bi_\bj =
  b^\bp_\bq$. So in this case, $\bA$ is also determined by $\bB$. This
  proves part (a).

(b) This follows from part (a). If $\bA \in \ker \rho$, then $\bA =
  \bzero$ by uniqueness.

(c) This follows from part (b).
\end{proof}

\begin{rmk}\label{rmk:sharper}
  Suppose that $n \le r+1$. If $\bC$ is any invariant in
  $\E_\Bbbk(n,r)$ with zero last block row, then $\bC = \bzero$. The
  same holds for any other block row or column. This follows from the
  proof of Lemma \ref{lem:rho-injects}(a); the assumption of a zero
  block row removes one degree of freedom from column slice sums, so
  the uniqueness conclusion is still valid, so $\bC=\bzero$.
\end{rmk}

The next result gives conditions under which we can construct
extensions with prescribed partial information in a chosen block row
or column. This is a crucial technical result needed to prove
Property 2. The chosen block row or column is controlled by its
free-pattern $F'(n,r) \cong D(n,r-1)$. We say that the prescribed
information is \emph{compatible} with the extension problem for a
given $\bB$ in $\E_\Bbbk(n,r-1)$ if there is some partial assignment
from a subset of some $F'(n,r)$ to $\Bbbk$ which gives the prescribed
information.

\begin{lem}\label{lem:partial}
Assume Property 1 for $(n-1,r)$ and Property 4 for $(n,r-1)$.  Suppose
that $\bB$ in $\E_\Bbbk(n,r-1)$ is given. Fix a choice of block row
(respectively, block column) and a choice of any number of compatible
prescribed rows (resp., columns) in the chosen block row (resp.,
column).  Then there exists an $\bA$ in $\E_\Bbbk(n,r)$ satisfying
$\rho(\bA)=\bB$ which agrees with the prescribed rows (resp.,
columns).
\end{lem}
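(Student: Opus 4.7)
The plan is to reduce the construction of $\bA$ to a decomposition problem in degree $r-1$ and then apply Proposition \ref{prop:extend}. Fix the chosen block row; say it is the $i$th (the block column case is entirely analogous, via $\pi_j$ in place of $\pi^i$). By Lemma \ref{lem:bijections}, specifying the $i$th block row of an invariant $\bA \in \E_\Bbbk(n,r)$ with $\rho(\bA) = \bB$ is equivalent, via the bijection $\pi^i$, to specifying a decomposition $\bB = \bB(1) + \cdots + \bB(n)$ with $\bB(j) \in \E_\Bbbk(n,r-1)^i_j$ (concretely, $\bA^i_j = \bB(j)$). Thus prescribing rows in $\bA^i_*$ corresponds to prescribing rows across the summands $\bB(j)$.

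By hypothesis, the prescribed information is compatible, so it arises from a partial assignment on a subset of $F'(n,r)$. Under the bijection $F'(n,r) \cong D(n,r-1)$, which is available because Property 4 is assumed at $(n,r-1)$, this transports to a partial assignment on a subset of $D(n,r-1)$. Property 4 at $(n,r-1)$ then allows us to extend this arbitrarily to a full assignment $f \colon D(n,r-1) \to \Bbbk$, and this determines a unique decomposition $\bB = \bB(1) + \cdots + \bB(n)$ with $\bB(j) \in \E_\Bbbk(n,r-1)^i_j$ whose entries match the prescribed data in the relevant slots of the $i$th block row.

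Finally, by Proposition \ref{prop:extend}(b)(ii), which uses Property 1 at $(n-1,r)$, each $\bB(j)$ admits a special extension $\bA(j) \in \E_\Bbbk(n,r)^i_j$ with $\rho(\bA(j)) = \bB(j)$. Setting $\bA = \bA(1) + \cdots + \bA(n)$, linearity of $\rho$ gives $\rho(\bA) = \bB$. By Proposition \ref{prop:special-ext-form}(a) the off-diagonal special blocks vanish, so $\bA^i_j = \bA(j)^i_j$, and by Proposition \ref{prop:special-ext-form}(c), $\bA(j)^i_j = \rho(\bA(j)) = \bB(j)$. Hence the $i$th block row of $\bA$ coincides with the prescribed rows, as required. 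The main delicate point is the bookkeeping in the middle step: one must verify that the composition of $\pi^i$ with $F'(n,r) \cong D(n,r-1)$ sends the prescribed entries exactly onto a legitimate partial assignment on the free decomposition pattern, which is precisely what the term \emph{compatible} is designed to encode.
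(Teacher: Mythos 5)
Your proof is correct and follows essentially the same approach as the paper's: it uses the bijection $\pi^i$ from Lemma \ref{lem:bijections} to transport the compatible prescribed data to a partial assignment on $D(n,r-1)$, extends it via Property 4 for $(n,r-1)$, and then invokes Proposition \ref{prop:extend}(b) (relying on Property 1 for $(n-1,r)$) to lift each summand to a special extension. The explicit verification via Proposition \ref{prop:special-ext-form}(a) and (c) that the resulting $\bA$ reproduces the prescribed block row is a small but welcome elaboration of a point the paper leaves tacit.
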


\begin{proof}
Suppose we have fixed on an $i$th block row extension (the argument
for a block column extension is similar and left to the reader). By
hypothesis, a set $D(n,r-1)$ exists satisfying Property 4 for
$(n,r-1)$.  Any assignment to this set determines a decomposition of
the given $\bB$, say $\bB = \bB(1)+ \cdots + \bB(n)$ where each
$\bB(j) \in \E_\Bbbk(n,r-1)^i_j$.  We define $F'(n,r)$ to correspond
to $D(n,r-1)$ under the bijection $\pi^i$ of Lemma
\ref{lem:bijections}.  Assignments to $D(n,r-1)$ correspond to
assignments to $F'(n,r)$, which complete the $i$th block row of $\bA$
in a way that satisfies all relevant extension equations. Under this
correspondence, $\bA^i_j = \bB(j)$ for each $j=1,\dots,n$.  Once the
$i$th block row of $\bA$ is complete, we apply Proposition
\ref{prop:extend}(b) to find special extensions $\bA(j)$ in
$\E_\Bbbk(n,r)^i_j$ for all $j$ such that $\bA=\bA(1)+ \cdots +
\bA(n)$ extends $\bB$.

To finish, we simply note that by hypothesis the prescribed rows in
the $i$th block row are compatible with the extension problem for the
given $\bB$, so they are specified by a partial assignment $f'$ to
some subset of some $F'(n,r)$. We can extend $f'$ to an assignment
$f:F'(n,r)\to \Bbbk$ which then determines the $i$th block row of
$\bA$ as above, in a way that coincides with the prescribed
information.
\end{proof}

\begin{example}
We illustrate the above proof. Take $(n,r)=(4,2)$, and suppose that
row 42 of an extension has been prescribed, for a given $\bB$ in
$\E_\Bbbk(4,1)$. The prescribed row is part of the final block row, so
we construct a final block row extension.  The following
\[
F'(4,2): \quad
 \setlength{\arraycolsep}{2.7pt}\scriptsize
\begin{array}{c|ccc|ccc|ccc|ccc|}
  &\rt{12}&\rt{13}&\rt{14}&\rt{21} &\rt{23}&\rt{24}&\rt{31}&\rt{32}
  &\rt{34}&\rt{41}&\rt{42}&\rt{43}   \\ \hline 
    42  &&&		&&&\cm		&&\cm&\cm	&&\cm&\cm\\ 
   43    &&&		&&&\cm		&&\cm&\cm	&&\cm&\cm\\ 
     \hline
\end{array}\; 
\]
depicts a row- and column-terminal free-pattern $F'(4,2)$ for the last
block row of our desired extension $\bA$, where the checkmarked entries
correspond to elements of $F'(4,2)$, which can be freely assigned in
order to determine the last block row of a general extension. This
free pattern is calculated in Example \ref{Dn1} below.

The prescribed $42$-row merely determines the values of the five free
entries in that row. By assigning arbitrary values to the remaining
five entries in $F'(4,2)$ we complete the last block row of $\bA$, and
then complete $\bA$ by choosing special extensions of each block in
the last block row, and summing, as in the first paragraph of the
proof of Lemma \ref{lem:partial}.
\end{example}

We note the following immediate consequence of Lemma
\ref{lem:partial}, which was promised in Remark \ref{rmk:promise}.

\begin{cor}\label{cor:colouring}
Assume the same hypotheses as in Lemma \ref{lem:partial}. Suppose that
$(a^\bi_*)$ or $(a^*_\bj)$ is a row or column (where $\bi$ or $\bj \in
I'(n,r)$) determined by an assignment to the variables in that row or
column labelled by the set $I_1(n,r)$ in Algorithm \ref{algo}. Then
that row or column is a row or column of some extension $\bA$ of any
given $\bB$ in $\E_\Bbbk(n,r-1)$. 
\end{cor}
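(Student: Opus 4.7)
The plan is to reduce the corollary to Lemma \ref{lem:partial}. Once it is established that the row (or column) produced by Algorithm \ref{algo} both satisfies the slice equations \eqref{eq:slice-row} (resp., \eqref{eq:slice-col}) for that row and is compatible with the extension problem in the sense of Lemma \ref{lem:partial}, the existence of the desired $\bA$ follows immediately. I focus on the row case for a fixed $\bi \in I'(n,r)$; the column case will be handled by the symmetric argument, interchanging the roles of $\bi$ and $\bj$ throughout.

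First I would verify that the algorithm produces a consistent row. The algorithm colours elements of $I'(n,r)$ either $1$ (free) or $0$ (forced). Whenever an element $\bj$ is coloured $0$, it is the unique uncoloured entry of some $\alpha$-slice $S$, and its value is set by the slice equation $\sum_{\bj' \in S} a^{\bi}_{\bj'} = b^{\bp}_{\bq}$. The technical heart of the proof is showing that this forced value is consistent with every other slice $S'$ containing $\bj$: if $S'$ still has uncoloured entries, the forcing merely constrains a subsequent step, whereas if $S'$ is already fully determined, the equation for $S'$ must already hold. I would argue by induction on the colouring order that every such cross-slice consistency condition reduces to a sum identity among entries of $\bB$ that holds because $\bB$ is an invariant in $\E_\Bbbk(n,r-1)$. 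Concretely, the independence of slice sums from $\alpha$ stated in Proposition \ref{prop:inv-properties}(a), together with value-type preservation (Proposition \ref{prop:inv-properties}(c)), guarantees that summing the row equations for $\bA$ in different orders yields the same total.

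Having shown that the algorithm's row satisfies all the equations \eqref{eq:slice-row}, I would then invoke Lemma \ref{lem:partial}. Under the bijection $\pi^{i_1}$ of Lemma \ref{lem:bijections} (with $i_1$ the first entry of $\bi$), the free entries $\{(\bi,\bj) : \bj \in I'_1(n,r)\}$ correspond to a set of variables appearing in a decomposition $\bB = \bB(1) + \cdots + \bB(n)$ provided by Property 4 for $(n,r-1)$. Any assignment to these variables therefore realises the algorithm's row as a partial assignment to some $F'(n,r)$ constructed in the proof of Lemma \ref{lem:partial}, which is precisely the definition of compatibility in that lemma. The lemma then produces the desired $\bA \in \E_\Bbbk(n,r)$ with $\rho(\bA) = \bB$ whose $\bi$th row coincides with the one prescribed by the algorithm.

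The main obstacle is the cross-slice consistency check in the first step. Unlike the free-pattern-based approach in Lemma \ref{lem:partial}, where all consistency is handled implicitly by a single global construction, Algorithm \ref{algo} fixes entries one at a time in a prescribed order, so the constraints imposed by different slices on a shared entry must be reconciled explicitly. The essential point making the algorithm work is that every such reconciliation is governed by the invariance of $\bB$; any apparent overdetermination in the row of $\bA$ is resolved by an identity already encoded in the properties of invariants in $\E_\Bbbk(n,r-1)$.
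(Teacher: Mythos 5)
The paper disposes of this Corollary in one sentence — ``we note the following immediate consequence of Lemma \ref{lem:partial}'' — so there is no official proof to compare against. Your broad strategy (show the row is compatible and then invoke Lemma \ref{lem:partial}) is exactly what the paper's one-liner is gesturing at, and your recognition that the cross-slice consistency of the row built by Algorithm \ref{algo} is nontrivial is a correct observation: that consistency is asserted by the paper in the paragraph just preceding the Corollary (``\dots determines the $\bi$th row $a^\bi_*$ \dots of a matrix $\bA$ satisfying equations \eqref{eq:slice-row}\dots'') but never proved there.

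The difficulty is in your second step, which is where the real content of the Corollary lies, and it contains a genuine gap. Compatibility in the sense of Lemma \ref{lem:partial} means the prescribed row is ``specified by a partial assignment $f'$ to some subset of some $F'(n,r)$.'' You assert that under $\pi^{i_1}$ the free entries $\{(\bi,\bj): \bj\in I'_1(n,r)\}$ ``correspond to a set of variables appearing in a decomposition provided by Property 4 for $(n,r-1)$,'' and conclude compatibility. But every entry of the row maps under $\pi^{i_1}$ to a label of a decomposition entry; what is actually needed is that the free entries of the algorithm land inside the free decomposition pattern $D(n,r-1)$, so that $\pi^{i_1}$ carries $\{(\bi,\bj): \bj\in I'_1(n,r)\}$ into $D(n,r-1)$ and hence (by the bijection $\pi^i$ used in Lemma \ref{lem:partial}'s proof) into $F'(n,r)$. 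That containment is neither obvious nor proved. The set $D(n,r-1)$ is constructed in Proposition \ref{prop:property-4} using a \emph{modified} version of Algorithm \ref{algo}, with pre-colourings depending on $j$ and on the nested sets $\Lin_{j-1}^{\Sym_r}$, whereas your $I'_1(n,r)$ comes from the unmodified algorithm applied once to $I'(n,r)$. Without an explicit comparison of the two colouring outputs (or, alternatively, an argument that some $F'(n,r)$ can always be constructed so that its row-$\bi$ slice equals $I'_1(n,r)$), the compatibility hypothesis of Lemma \ref{lem:partial} is not verified and the lemma cannot be applied. This is the same point the paper elides by calling the Corollary immediate, but it is a real gap in your proposal as written.
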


Now we are ready to prove Property 2. 

\begin{prop}\label{prop:decomp}
Assume Property 1 for $(n-1,r)$ and Property 4 for $(n-1,r-1)$. Then
Property 2 holds for $(n,r)$.
\end{prop}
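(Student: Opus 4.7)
The plan is to construct the decomposition $\bA = \bA(1) + \cdots + \bA(n)$ block-by-block, using Property 4 for $(n-1,r-1)$ to choose the off-diagonal blocks and Property 1 for $(n-1,r)$ to guarantee that the pieces assemble into genuine invariants. Fix $i$; the block-column case is entirely symmetric.

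First I would determine the forced structure of each $\bA(j)$. Proposition \ref{prop:special-ext-form}(a) implies that the only potentially non-zero blocks of any candidate $\bA(j) \in \E_\Bbbk(n,r)^i_j$ are $\bA(j)^i_j$ and $\bA(j)^p_q$ for $p \ne i$, $q \ne j$. Comparing $i$th block rows of $\bA$ and $\sum_k \bA(k)$ forces $\bA(j)^i_j = \bA^i_j$, while matching the remaining $(p,q)$-blocks (for $p \ne i$) requires
\[
\bA^p_q = \sum_{k \ne q} \bA(k)^p_q.
\]
To produce the off-diagonal blocks I would apply Property 4 for $(n-1,r-1)$ block-wise. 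For each $(p,q)$ with $p \ne i$, Lemma \ref{lem:blocks-are-special} places $\bA^p_q$ in $\E_\Bbbk(n,r-1)^p_q$, so the map $\eta^p_q$ of Proposition \ref{prop:n-reduce-iso} identifies it with an element of $\E_\Bbbk(n-1,r-1)$. Property 4 for $(n-1,r-1)$, with fixed row index equal to the image of $i$, provides a parameterized decomposition into $n-1$ special pieces indexed by $\bar{k} \in \{1,\dots,n-1\}$; pulling back via $\theta^p_q$ yields candidate blocks $\bA(k)^p_q \in \E_\Bbbk(n,r-1)^p_q \cap \E_\Bbbk(n,r-1)^i_k$ (for $k \ne q$) whose sum is $\bA^p_q$.

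The main obstacle is verifying that these independently chosen blocks assemble into coherent invariants $\bA(j) \in \E_\Bbbk(n,r)^i_j$, or equivalently (via $\eta^i_j$) into invariants $\tilde{\bA}(j) \in \E_\Bbbk(n-1,r)$. By Lemma \ref{lem:theta-rho-commute} the restriction of $\tilde{\bA}(j)$ is forced to equal $\eta^i_j(\bA^i_j)$, so by Remark \ref{rmk:block-sums} all block row and column sums of $\tilde{\bA}(j)$ must equal $\eta^i_j(\bA^i_j)$. I would address this by first applying Property 1 for $(n-1,r)$ (via Proposition \ref{prop:extend}(a)) to obtain an initial extension $\bA(j)_0 \in \E_\Bbbk(n,r)^i_j$ of $\bA^i_j$, and then using the parameter freedom provided by the set $D(n-1,r-1)$ in Property 4 to synchronize the off-diagonal block choices across all $j$ simultaneously. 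The bookkeeping in Lemma \ref{lem:bijections} ensures that the decomposition parameters in Property 4 for $(n-1,r-1)$ correspond precisely to free entries in extensions in $\E_\Bbbk(n-1,r)$, so compatible coherent choices can indeed be made for every $j$. Once coherence is established, the identity $\sum_j \bA(j) = \bA$ holds block-by-block by construction: the $(i,q)$-blocks match because only $\bA(q)$ contributes, and the $(p,q)$-blocks for $p \ne i$ match by the decomposition coming from Property 4.
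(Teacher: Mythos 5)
The proposal correctly identifies the block structure forced on each $\bA(j) \in \E_\Bbbk(n,r)^i_j$ (via Proposition~\ref{prop:special-ext-form}(a)) and correctly pinpoints the central difficulty: the blocks $\bA(j)^p_q$ chosen for different pairs $(p,q)$ must assemble into a single genuine invariant for each $j$. However, your resolution of that difficulty --- ``using the parameter freedom provided by $D(n-1,r-1)$ to synchronize the off-diagonal block choices across all $j$ simultaneously'' --- is asserted, not proved, and this is precisely where the actual work lies. The invariant conditions in Proposition~\ref{prop:GHS} (place-permutation symmetry, slice-sum constraints, value-type) couple entries across distinct blocks $\bA(j)^p_q$ as $(p,q)$ varies, so decomposing each $\bA^p_q$ independently via Property 4 for $(n-1,r-1)$ and then ``tuning parameters'' does not obviously produce coherent $\bA(j)$'s. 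You haven't explained what the synchronization argument is, nor checked that the degrees of freedom in $D(n-1,r-1)$ are large enough to satisfy, simultaneously, the constraints across all blocks in a given $\bA(j)$.

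The paper avoids the simultaneous-coherence problem entirely by taking a \emph{sequential} route, and that sequential structure is the real idea that the proposal is missing. After splitting into the easy case $n\le r+1$ (handled by Remark~\ref{rmk:sharper}, which your proposal does not address), the paper processes $j = n, n-1, \dots, 2$ in order, forming a running difference $\bC(j)$. At each step it uses Lemma~\ref{lem:partial} for $(n-1,r)$ (not a block-by-block use of Property 4 for $(n-1,r-1)$) to choose $\bA(j)$ whose image under $\eta^n_j$ agrees with the running difference on columns indexed by the nested sets $\Lin_{j-1}$. This greedy ``zeroing out of columns'' ensures that after the final step the residual $\bC(2)$ has a zero first block column (outside one block) \emph{and} a zero last block row, so Lemma~\ref{lem:special-criterion} forces $\bC(2)$ to lie in $\E_\Bbbk(n,r)^n_1$, making it the last piece $\bA(1)$ automatically --- no separate coherence argument is needed. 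Without this mechanism (or an equivalent one), I don't see how to close the gap in your proposal; in particular, the fact that the leftover $\bC(2)$ is itself a special invariant is not something you can assume --- it is the conclusion of the sequence of carefully constrained choices, and that's why the $\Lin_j$ filtration is built in.
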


\begin{proof}
Let $\bA$ in $\E_\Bbbk(n,r)$ be given. We choose to decompose $\bA$
based on its last block row. (The argument for any other block row or
column is similar.) By Proposition \ref{prop:n-reduce-iso}, the
problem of extending from $\E_\Bbbk(n,r-1)^n_j$ to $\E_\Bbbk(n,r)^n_j$
is equivalent to the problem of extending from $\E_\Bbbk(n-1,r-1)$ to
$\E_\Bbbk(n-1,r)$.  So by the first hypothesis and Proposition
\ref{prop:extend}(a),
\begin{equation}\label{eq:spec-ext}
\text{there exists $\bA(j)$ in $\E_\Bbbk(n,r)^n_j$ such that
  $\rho(\bA(j)) = \bA^n_j$}
\end{equation}
for all $j = 1, \dots, n$.  Any choice of $\bA(j)$ satisfying
\eqref{eq:spec-ext} makes the last block row of $\bA$ agree with that
of the sum $\bA(1)+\cdots +\bA(n)$, so that the last block row of the
difference $\bC=\bA-\bA(1)-\cdots -\bA(n)$ is zero. We need to show
that it is always possible to choose the $\bA(j)$ satisfying
\eqref{eq:spec-ext} in such a way that $\bC=\bzero$.

\smallskip\noindent\textbf{Case 1.}  If $n \le r+1$ then $\bC$ as
above is an invariant whose last block row is zero. The zero invariant
$\bzero$ is another such invariant. By Remark \ref{rmk:sharper}, it
follows that $\bC= \bzero$. This completes Case 1.

\smallskip\noindent
\textbf{Case 2.} Assume for the rest of the proof that $n > r+1$ (so
$n-r \ge 2$).  We aim to show that $\bA(j)$ for $j=2, \dots, n$ can be
chosen satisfying \eqref{eq:spec-ext} in such a way that all but the
last block of the first block column of $\bA - \bA(2) - \cdots -
\bA(n)$ is zero.  Working in \emph{reverse} order from right to left
along the last block row of $\bA$, we choose $\bA(j)$ for $j = n, n-1,
\dots, 2$ satisfying \eqref{eq:spec-ext} by the following process. For
each $j$, assuming that $\bA(n), \dots, \bA(j)$ have already been
chosen, we set $\bC(j)=\bA-\bA(n)-\cdots -\bA(j)$.

\smallskip
\emph{Step 1.} First we choose arbitrary $\bA(n), \dots, \bA(r+2)$
subject only to condition \eqref{eq:spec-ext}. This step is not
inductive.

\smallskip
\emph{Step 2.} We proceed by reverse induction on $j$ running from
$r+1$ down to $2$. At each stage, we claim that $\bA(j)$ satisfying
\eqref{eq:spec-ext} can be chosen so that:
\begin{equation}\label{eq:agrees}
  \begin{minipage}{4in}
  $\eta^n_j(\bA(j))$ agrees with $\eta^n_j(\bC(j+1))$ on all columns
    indexed by a label in $\Lin_{j-1}$
  \end{minipage}
\end{equation}
where $\Lin_j = \{j_1\cdots j_r \in I'(n,r): j_\alpha=\alpha \text{
  for all } \alpha=1, \dots, j\}$.  To see this, we apply Lemma
\ref{lem:partial} in the case $(n-1,r)$, since $\eta^n_j(\bC(j+1))$
belongs to $\E_\Bbbk(n-1,r)$. (Property 1 for $(n-1,r)$ implies the
existence of special extensions for $(n-1,r)$, which is equivalent to
Property 1 for $(n-2,r)$, so the hypotheses of Lemma \ref{lem:partial}
for the case $(n-1,r)$ are satisfied.) By the lemma, we can find
$\bA'(j)$ in $\E_\Bbbk(n-1,r)$ which agrees on the image under
$\eta^n_j$ of the columns of $\bC(j)$ indexed by $\Lin_{j-1}$. Then we
set $\bA(j) = \theta^n_j \bA'(j)$. This matrix satisfies
\eqref{eq:agrees}, so the claim is proved.

At this point we have inductively chosen $\bA(r+1)$ down to $\bA(2)$,
and we are ready for the final step, choosing $\bA(1)$.  Since
$\Lin_1$ indexes all the non-initialised columns in the first block
column, the only nonzero block in the first block column of $\bC(2)$
is $\bC(2)^n_1$. By construction, the same is true of the blocks in
the last block row.  Thus, we may apply Lemma
\ref{lem:special-criterion} to conclude that $\bC(2)$ belongs to
$\E_\Bbbk(n,r)^n_1$, and hence by setting $\bA(1) = \bC(2)$ we are
done.
\end{proof}

Step 2 above starts with the unique column in $\Lin_r = \{1 \cdots
r\}$, which isn't affected by any special invariant in
$\E_\Bbbk(n,r)^n_j$ for $j<r+1$. So the last opportunity to zero that
column is when we choose $\bA(r+1)$.  Similarly, as the induction in
Step 2 proceeds, controlled by the nested sequence
\[
\Lin_{r} \subset \Lin_{r-1} \subset \cdots \subset \Lin_1,
\]
the new columns that are zeroed in the running difference are
precisely those columns that cannot be affected in subsequent steps.

\begin{example}\label{ex:decomp42}
Assume that $\bA \in \E_\Bbbk(4,2)$. We illustrate Case 2 of the above
proof for a last block row decomposition, working from right to left
through the last block row.  Step 1 consists of subtracting an
arbitrary $\bA(4)$ in $\E_\Bbbk(4,2)^4_4$ such that
$\rho(\bA(4))=\bA^4_4$. Referring to the matrix forms depicted in
Examples \ref{ex:n23-r2-inv}, \ref{ex:special-inv} we see that after
Step 1,
\[
\setlength{\arraycolsep}{2.7pt}\scriptsize
\bC(4) = \bA - \bA(4) = 
\begin{array}{c|cccc|cccc|cccc|cccc|}
  &\rt{11}&\rt{12}&\rt{13}&\rt{14}&\rt{21}&\rt{22}&\rt{23}&\rt{24}&
  \rt{31}&\rt{32}&\rt{33}&\rt{34}&\rt{41}&\rt{42}&\rt{43}&\rt{44}\\ \hline
  11&*& & & & &*& & & & &*& & & & &*\\
  12& &t_1&*&*&*& &*&*&*&*& &*&*&*&* & \\
  13& &t_2&*&*&*& &*&*&*&*& &*&*&*&*& \\
  14& &*&s&0&*& &*&0&*&*& &0&*&*&*& \\  \hline
  21& &t'_1&*&*&*& &*&*&*&*& &*&*&*&*&  \\  
  22&*& & & & &*& & & & &*& & & & &*\\
  23& &t'_2&*&*&*& &*&*&*&*& &*&*&*&*& \\  
  24& &*&s'&0&*& &*&0&*&*& &0&*&*&*& \\ \hline  
  31& &t''_1&*&*&*& &*&*&*&*& &*&*&*&*& \\
  32& &t''_2&*&*&*& &*&*&*&*& &*&*&*&*& \\
  33&*& & & & &*& & & & &*& & & & &*\\  
  34& &*&s''&0&*& &*&0&*&*& &0&*&*&*& \\ \hline
\end{array} 
\]
where blank entries represent zero as usual. We omit showing the last
block row, where there are no choices. The nine explicit zeros shown
above are place-permutation symmetric to entries of $\bC(4)^4_4$,
which is $\bzero$ by construction. We must have $t_1+t_2 = s$,
$t'_1+t'_2 = s'$, and $t''_1+t''_2 = s''$ thanks to local GDS
conditions in the blocks.

The above equations imply that the image of column 12 under $\eta^4_3$
is consistent with that column of an extension of
$\eta^4_3(\bA^4_3)$. By Lemma \ref{lem:partial}, we can find an
$\bA(3)$ in $\E_\Bbbk(4,2)^4_3$ such that $\rho(\bA(3)) = \bC(4)^4_3$
and $\eta^4_3 (\bA(3))$ agrees with $\eta^4_3(\bC(4))$ in column
12. (Note that $\Lin_2=\{12\}$.) This implies that $\bC(3)$ has the form
\[
\setlength{\arraycolsep}{2.7pt}\scriptsize
\bC(3) = \bC(4) - \bA(3) = 
\begin{array}{c|cccc|cccc|cccc|cccc|}
  &\rt{11}&\rt{12}&\rt{13}&\rt{14}&\rt{21}&\rt{22}&\rt{23}&\rt{24}&
  \rt{31}&\rt{32}&\rt{33}&\rt{34}&\rt{41}&\rt{42}&\rt{43}&\rt{44}\\ \hline
  11&*& & & & &*& & & & &*& & & & &*\\
  12& &0&*&*&0& &*&*&*&*& &*&*&*&* & \\
  13& &0&*&*&0& &*&*&*&*& &*&*&*&*& \\
  14& &*&0&0&*& &0&0&*&*& &0&*&*&0& \\  \hline
  21& &0&*&*&0& &*&*&*&*& &*&*&*&*&  \\  
  22&*& & & & &*& & & & &*& & & & &*\\
  23& &0&*&*&0& &*&*&*&*& &*&*&*&*& \\  
  24& &*&0&0&*& &0&0&*&*& &0&*&*&0& \\ \hline  
  31& &0&*&*&0& &*&*&*&*& &*&*&*&*& \\
  32& &0&*&*&0& &*&*&*&*& &*&*&*&*& \\
  33&*& & & & &*& & & & &*& & & & &*\\  
  34& &*&0&0&*& &0&0&*&*& &0&*&*&0& \\ \hline
  \end{array} \; .
\]
The explicit zeros in column 21 must be zero because invariants are
place-permutation invariant, and column 21 is place-permutation
symmetric to column 12.

By Lemma \ref{lem:partial} there exists an $\bA(2)$ in
$\E_\Bbbk(4,2)^4_2$ such that $\rho(\bA(2)) = \bA^4_2$ and $\eta^4_2
(\bA(2))$ agrees with $\eta^4_2(\bC(3))$ in columns $\Lin_1 = \{1*\} =
\{12, 13, 14\}$. Hence the difference $\bC(2) = \bC(3) - \bA(2)$
satisfies the property
\[
\text{$\bC(2)^i_4 = \bzero$ and $\bC(2)^4_j = \bzero$ for all $i, j
  <4$.}
\]
By Lemma \ref{lem:special-criterion} it follows that $\bC(2)$ is
in $\E_\Bbbk(4,2)^4_1$, so by setting $\bA(1) = \bC(2)$ we obtain the
desired decomposition $\bA = \bA(1)+ \cdots + \bA(4)$. This completes
Example \ref{ex:decomp42}.
\end{example}

To prove Property 3 we need the following result, which describes how
to construct a free pattern for the special extension problem, for a
given pair $i,j$ of indices in the set $\{1, \dots, n\}$. We remind
the reader that, by Proposition \ref{prop:special-ext-form}(c), the
restriction of any $\bA$ in $\E_\Bbbk(n,r)^i_j$ belongs to
$\E_\Bbbk(n,r-1)^i_j$. Let $\theta^i_j$ be the isomorphism in
Proposition \ref{prop:n-reduce-iso}.

\begin{lem}\label{lem:free-special}
Assume Property 3 for $(n-1,r)$. Then $F(n,r)^i_j =
\theta^i_jF(n-1,r)$ is a free pattern for the problem of extending
invariants from $\E_\Bbbk(n,r-1)^i_j$ to $\E_\Bbbk(n,r)^i_j$, in the
sense that for any given $\bB$ in $\E_\Bbbk(n,r-1)^i_j$ and any
assignment $f^i_j: F(n,r)^i_j \to \Bbbk$, there is a unique special
invariant $\bA$ in $\E_\Bbbk(n,r)^i_j$ such that $\rho(\bA) = \bB$.
\end{lem}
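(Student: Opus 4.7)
The plan is to transfer the free pattern for $\E_\Bbbk(n-1,r-1) \to \E_\Bbbk(n-1,r)$ guaranteed by Property 3 for $(n-1,r)$ across the $\Bbbk$-linear isomorphism $\theta^i_j \colon \E_\Bbbk(n-1,r) \xrightarrow{\approx} \E_\Bbbk(n,r)^i_j$ of Proposition \ref{prop:n-reduce-iso}, using Lemma \ref{lem:theta-rho-commute} to ensure the transfer commutes with restriction.

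To begin, I would set $\bB' = \eta^i_j(\bB) \in \E_\Bbbk(n-1,r-1)$, using that $\eta^i_j = (\theta^i_j)^{-1}$. Because $\eta^i_j$ acts by excising and then re-indexing rows and columns labeled by multi-indices containing $i$ or $j$, it induces a natural bijection between $I'(n-1,r) \times I'(n-1,r)$ and the subset of $I'(n,r) \times I'(n,r)$ whose first coordinate avoids $i$ and whose second coordinate avoids $j$. Under this bijection, the subset $F(n-1,r) \subseteq I'(n-1,r)\times I'(n-1,r)$ corresponds precisely to the definition $F(n,r)^i_j := \theta^i_j F(n-1,r)$, and assignments $f^i_j \colon F(n,r)^i_j \to \Bbbk$ correspond bijectively to assignments $f \colon F(n-1,r) \to \Bbbk$.

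Given such an $f$, Property 3 for $(n-1,r)$ yields a unique $\bA' \in \E_\Bbbk(n-1,r)$ with $\rho(\bA') = \bB'$ realising $f$ on $F(n-1,r)$. I then set $\bA = \theta^i_j(\bA') \in \E_\Bbbk(n,r)^i_j$. By Lemma \ref{lem:theta-rho-commute} we have
\[
\rho(\bA) = \rho(\theta^i_j(\bA')) = \theta^i_j(\rho(\bA')) = \theta^i_j(\bB') = \bB,
\]
and transport through the label bijection shows that the entries of $\bA$ at positions in $F(n,r)^i_j$ realise $f^i_j$. For uniqueness, any two candidate special extensions of $\bB$ in $\E_\Bbbk(n,r)^i_j$ agreeing on $F(n,r)^i_j$ push forward under $\eta^i_j$ to two elements of $\E_\Bbbk(n-1,r)$ extending $\bB'$ and agreeing on $F(n-1,r)$; Property 3 for $(n-1,r)$ forces them to coincide, and injectivity of $\eta^i_j$ then gives the desired uniqueness upstairs.

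The only subtlety is the label bookkeeping: one must check that $\theta^i_j$ sends elements of $F(n-1,r) \subseteq I'(n-1,r)\times I'(n-1,r)$ to elements of $I'(n,r)\times I'(n,r)$. This is automatic since the re-indexing bijections between $\{1,\dots,n\}\setminus\{i\}$ and $\{1,\dots,n-1\}$ (respectively between $\{1,\dots,n\}\setminus\{j\}$ and $\{1,\dots,n-1\}$) preserve distinctness of entries, so a multi-index with $r$ distinct entries maps to one with $r$ distinct entries. No serious obstacle is anticipated beyond this translation.
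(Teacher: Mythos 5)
Your proposal is correct and follows essentially the same strategy as the paper's proof: reduce the special extension problem for $\E_\Bbbk(n,r)^i_j$ to the ordinary extension problem for $\E_\Bbbk(n-1,r)$ via the isomorphism $\theta^i_j$, then invoke Property 3 for $(n-1,r)$. The paper's version is stated more tersely (describing the construction block-by-block and leaving the transfer argument implicit); yours makes the use of Lemma \ref{lem:theta-rho-commute} and the label bijection explicit, which fills in exactly the details the paper leaves to the reader.
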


\begin{proof}
Suppose some $\bB$ in $\E_\Bbbk(n,r-1)^i_j$ is given. To construct a
special extension $\bA$ in $\E_\Bbbk(n,r)^i_j$ we must set $\bA^i_j =
\bB$ and all other blocks in the $i$th block row and $j$th block
column to $\bzero$.  The rest of $\bA$ is determined by
place-permutation symmetry and the isomorphism $\theta^i_j:
\E_\Bbbk(n-1,r) \to \E_\Bbbk(n,r)^i_j$ of Proposition
\ref{prop:n-reduce-iso}, thus determined by assigning values to images
of the free variables in the free pattern $F(n-1,r)$.
\end{proof}

Now we are ready for the proof of Property 3.

\begin{prop}\label{prop:free-ptns-exist}
  Property 4 for $(n,r-1)$ and Property 3 for $(n-1,r)$ imply Property
  3 for $(n,r)$.
\end{prop}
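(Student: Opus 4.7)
Fix a block row index $i \in \{1, \dots, n\}$ (the column-index case is entirely analogous). The goal is to exhibit a free pattern $F(n,r) \subseteq I'(n,r) \times I'(n,r)$ based on the $i$th block row, and we will do so by constructing $F(n,r) = F'(n,r) \sqcup F''(n,r)$ as in \eqref{eq:crux}, feeding Property 4 for $(n,r-1)$ into $F'(n,r)$ and Property 3 for $(n-1,r)$ into $F''(n,r)$.

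First, build $F'(n,r)$ from Property 4 for $(n,r-1)$. That hypothesis supplies a free decomposition pattern $D(n,r-1) \subseteq \{1,\dots,n\} \times I'(n,r-1) \times I'(n,r-1)$ such that, for any invariant in $\E_\Bbbk(n,r-1)$, an assignment $D(n,r-1) \to \Bbbk$ uniquely determines its decomposition into special pieces based on the $i$th block row. Define $F'(n,r) := (\pi^i)^{-1}(D(n,r-1))$, where $\pi^i$ is the bijection of Lemma \ref{lem:bijections}. Then $F'(n,r)$ is a subset of $\{(\bi,\bj) \in I'(n,r) \times I'(n,r) : i_1 = i\}$, and under $\pi^i$ assignments to $F'(n,r)$ are in bijection with assignments to $D(n,r-1)$. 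Thus such an assignment determines (via the $i$th block row of the sought $\bA$) the decomposition $\rho(\bA) = \bB(1) + \cdots + \bB(n)$ with $\bB(j) \in \E_\Bbbk(n,r-1)^i_j$.

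Second, build $F''(n,r)$ from Property 3 for $(n-1,r)$. By Lemma \ref{lem:free-special}, for each $j$ the special extension problem $\E_\Bbbk(n,r-1)^i_j \to \E_\Bbbk(n,r)^i_j$ admits the free pattern $\theta^i_j F(n-1,r) \subseteq I'(n,r) \times I'(n,r)$, which consists of pairs whose row index avoids $i$ and column index avoids $j$. We assemble $F''(n,r)$ as the (disjoint) union of the $n$ sets $\theta^i_j F(n-1,r)$, realised inside $I'(n,r) \times I'(n,r)$ outside the $i$th block row, so that each element of $F''(n,r)$ is attached to a distinguished $j$ recording which special extension it parametrises.

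Finally, set $F(n,r) := F'(n,r) \sqcup F''(n,r)$ and verify the required property. Given any $\bB \in \E_\Bbbk(n,r-1)$ and any $f: F(n,r) \to \Bbbk$, the restriction $f|_{F'(n,r)}$ supplies a unique decomposition $\bB = \bB(1) + \cdots + \bB(n)$; for each $j$, the restriction of $f$ to the $j$th summand of $F''(n,r)$ yields, via Lemma \ref{lem:free-special}, a unique special extension $\bA(j) \in \E_\Bbbk(n,r)^i_j$ with $\rho(\bA(j)) = \bB(j)$; and $\bA := \bA(1) + \cdots + \bA(n)$ then lies in $\E_\Bbbk(n,r)$ and satisfies $\rho(\bA) = \bB$ by Proposition \ref{prop:extend}(b)(iii). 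Uniqueness follows by reversing this construction: the $i$th block row of $\bA$ recovers the $\bB(j)$'s (hence $f|_{F'(n,r)}$) via $\pi^i$, and subtracting off the block-row data exposes each $\bA(j)$ as a sum of special invariants in $\E_\Bbbk(n,r)^i_j$, which is parametrised uniquely by $f|_{F''(n,r)}$ through $\theta^i_j$. The main obstacle is the realisation step: a priori the sets $\theta^i_j F(n-1,r)$ can coincide as subsets of $I'(n,r) \times I'(n,r)$ (distinct $j,j'$ may both be absent from a given $\bj$). Handling this is the combinatorial crux, and it is managed by exploiting the value-type constraint $\Lambda_i(\bi) = \Lambda_j(\bj)$ satisfied by special invariants together with place-permutation symmetry, which canonically attaches each free entry of $F''(n,r)$ to a unique $j$.
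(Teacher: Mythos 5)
Your construction of $F'(n,r)$ via $(\pi^i)^{-1}D(n,r-1)$ and your use of Lemma~\ref{lem:free-special} to obtain the per-$j$ patterns $\theta^i_j F(n-1,r)$ match the paper. However, your treatment of $F''(n,r)$ has a genuine gap. A free pattern must, by Property~3, be a subset of $I'(n,r)\times I'(n,r)$, so $F''(n,r)$ has to be the actual union $\bigcup_j \theta^i_j F(n-1,r)$, not a formal disjoint union tagged by $j$. You correctly flag the overlap as the crux, but your proposed resolution --- that value-type constraints and place-permutation symmetry ``canonically attach each free entry to a unique $j$'' --- is simply false. With $F(3,2)=\{(32,32)\}$ one computes $\theta^4_1F(3,2)=\theta^4_2F(3,2)=\{(32,43)\}$, so the same pair arises from both $j=1$ and $j=2$; accordingly $F''(4,2)=\{(32,32),(32,42),(32,43)\}$ has only three elements for four values of $j$ (compare Example~\ref{ex:free-patterns}(i)). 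Worse, when $(\bi,\bj)$ lies in several $\theta^i_j F(n-1,r)$, the entry $a^\bi_\bj$ of the sought extension $\bA=\sum_j\bA(j)$ is a \emph{sum} of contributions from several special pieces, not the free parameter of any single $\bA(j)$, so assigning a value to that entry does not prescribe any individual $f(j)$. The paper deals with this by taking the genuine union, introducing $\bA_{F''}$ with $\bA_{F''}=\sum_j\bA(j)_{F''}$, and arguing separately that every assignment $f'':F''(n,r)\to\Bbbk$ arises from some choice of the $f(j)$, and that $f''$ (given the fixed block row) pins down $\bA$.

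The uniqueness step also needs more than ``reversing the construction.'' Given an extension $\bA$ with the prescribed $i$th block row, the individual summands $\bA(j)$ are \emph{not} recoverable from $\bA$: there are in general many decompositions $\bA=\sum_j\bA(j)$ with $\bA(j)\in\E_\Bbbk(n,r)^i_j$, so ``subtracting off the block-row data exposes each $\bA(j)$'' is not well-defined. The paper first invokes Proposition~\ref{prop:decomp} (whose hypotheses hold here) to guarantee that every extension with the given block row admits \emph{some} such decomposition, and then compares two extensions sharing the same $f''$ by observing that each $\bA(j)$ is the same fixed linear function of its restriction to $F(n,r)^i_j\subseteq F''(n,r)$, so $\sum_j\bA(j)_{F''}=\sum_j\bA'(j)_{F''}$ forces $\bA=\bA'$. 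Your proposal does not supply a substitute for this argument.
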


\begin{proof} 
We choose to base the construction of $F(n,r)$ on the last block
row. The argument for any other block row or column is similar.  Let
$\pi = \pi^n$ be the map in Lemma \ref{lem:bijections}.  Set $F'(n,r)
= \pi^{-1} D(n,r-1)$. Then $F'(n,r)$ is a free pattern for completing
the last block row of an extension $\bA$ of $\bB$, because doing so is
equivalent to decomposing $\bB$ along its last block row. Let $\{
\bA^n_j: j=1, \dots, n \}$ be the last block row determined by some
chosen assignment $f': F'(n,r) \to \Bbbk$. Fix this choice of last
block row of $\bA$ for the rest of the argument.

To find the desired extension $\bA$, we apply Proposition
\ref{prop:extend}(a) to choose arbitrary special extensions $\bA(j)$
in $\E_\Bbbk(n,r)^n_j$ such that $\rho(\bA(j)) = \bA^n_j$ for all $j$,
and set $\bA = \bA(1) + \cdots + \bA(n)$.  By Lemma
\ref{lem:free-special}, there exists a free pattern $F(n,r)^n_j$ for
each $j$ and an assignment $f(j): F(n,r)^n_j \to \Bbbk$ that
determines $\bA^n_j$. Let $F''(n,r) = \bigcup_{j=1}^n F(n,r)^n_j$. To
continue, we introduce the notation
\[
\bA_{F''} := (a^\bi_\bj)_{(\bi,\bj) \in F''(n,r)}
\]
for the restriction of $\bA$ to $F''(n,r)$, and similarly for each
$\bA(j)$. We have $\bA_{F''} = \sum_j \bA(j)_{F''}$, so the given
assignments $f(j)$ induce a corresponding assignment $f'': F''(n,r)
\to \Bbbk$. For the given fixed last block row of $\bA$, it is clear
that there is an extension $\bA$ whose restriction to $F''(n,r)$
induces $f''$, for an arbitrary assignment $f'': F''(n,r) \to \Bbbk$,
because we can choose the $f(j)$ entries arbitrarily.

Every extension $\bA$ of $\bB$ with the specified last block row must
be of the form $\bA = \bA(1)+\cdots +\bA(n)$, where $\bA(j)$ is in
$\E_\Bbbk(n,r)^n_j$ and $\rho(\bA(j)) = \bA^n_j$ for all $j$. This
follows from the hypotheses, which by Proposition \ref{prop:decomp}
imply that the decomposition property holds for $(n,r)$.

To finish, we need to argue that $\bA$ (with the fixed last block row)
is uniquely determined by its assignment $f''$. To see this, suppose
that $\bA'$ is another extension of $\bB$, with the same last block
row, given by the same assignment $f''$. Then $\bA_{F''} =
\bA'_{F''}$. Thus the equations $\bA = \sum_j \bA(j)$, $\bA' = \sum_j
\bA'(j)$ imply by restriction that
\[
\textstyle \sum _j \bA(j)_{F''} = \sum_j \bA'(j)_{F''} \, .
\]
Each entry of $\bA(j)$, $\bA'(j)$ is uniquely expressible by the same
linear combination of the free pattern variables $a(j)^\bi_\bj$,
$a'(j)^\bi_\bj$ indexed by the free pattern $F(n,r)^n_j$. Hence, any
linear relations among the $\{\bA(j)\}$, $\{\bA'(j)\}$ are determined
by their restriction to $F''(n,r)$, which contains $F(n,r)^n_j$.  So
the above displayed equality implies that
\[
\textstyle \sum _j \bA(j) = \sum_j \bA'(j) \, .
\]
Hence $\bA = \bA'$, as required. This proves the desired uniqueness
statement. We have now shown that $F''(n,r)$ is a free pattern for
constructing an extension $\bA$ having the specified last block
row. Thus, the disjoint union $F(n,r) = F'(n,r) \sqcup F''(n,r)$ is a
free pattern for the extension problem (for the given $\bB$).
\end{proof}

We note the following immediate consequence of the above proof.

\begin{cor}
  Under the same hypotheses as the preceeding result based on an $i$th
  block row (respectively, $j$th block column) construction,
  \[
  F(n,r) = F'(n,r) \sqcup F''(n,r)
  \]
  where $F''(n,r) = \bigcup_{j=1}^n F(n,r)^i_j$ (resp., $F''(n,r) =
  \bigcup_{i=1}^n F(n,r)^i_j$) and $F'(n,r)$ is determined by the
  condition $\pi F'(n,r) = D(n,r-1)$.
\end{cor}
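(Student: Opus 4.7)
The plan is to observe that the construction given in the proof of Proposition \ref{prop:free-ptns-exist} was phrased specifically for the last block row (the case $i=n$), but the argument transfers verbatim to any $i$th block row, and a symmetric argument handles any $j$th block column. So the proof consists of re-reading that construction with the appropriate index substitutions and verifying that every ingredient used there is available in the desired generality.

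For the $i$th block row case, I would proceed as follows. Replace the map $\pi = \pi^n$ used in the cited proof by $\pi = \pi^i$ from Lemma \ref{lem:bijections}, and set $F'(n,r) := \pi^{-1} D(n,r-1)$, where $D(n,r-1)$ is a free decomposition pattern for $(n,r-1)$ based on the $i$th block row (which exists by Property 4 for $(n,r-1)$, one of the hypotheses inherited from Proposition \ref{prop:free-ptns-exist}). Next, for each $j = 1, \dots, n$, apply Lemma \ref{lem:free-special} with the pair $(i,j)$ to obtain a free extension pattern $F(n,r)^i_j$ for the special extension problem from $\E_\Bbbk(n,r-1)^i_j$ to $\E_\Bbbk(n,r)^i_j$, and define $F''(n,r) := \bigcup_{j=1}^n F(n,r)^i_j$. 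Then I would run the existence and uniqueness argument of Proposition \ref{prop:free-ptns-exist} with every occurrence of ``last block row'' replaced by ``$i$th block row''; the decomposition property used in the uniqueness half is available for arbitrary block rows by Proposition \ref{prop:decomp}, and the special extensions needed for existence are available by Proposition \ref{prop:extend}(a), so the argument goes through with no modification of substance.

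For the $j$th block column case, the same strategy works with $\pi^i$ replaced by $\pi_j$ from Lemma \ref{lem:bijections}, $F''(n,r)$ replaced by $\bigcup_{i=1}^n F(n,r)^i_j$, and Proposition \ref{prop:decomp} invoked for column decompositions. Alternatively, one could appeal to the symmetry between rows and columns in equations \eqref{eq:slice-row}--\eqref{eq:stability}, or to the induced $W_n$-actions in \eqref{eq:W_n-actions}, to transport a row pattern to a column pattern by transposition.

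The only point requiring genuine checking (and the closest thing to an obstacle) is that each of the three ingredients invoked in the cited proof — the existence of special extensions, the decomposition property, and the free special extension patterns from Lemma \ref{lem:free-special} — really is available for an arbitrary block row or column, not merely the last one. But this is already built into the statements of Proposition \ref{prop:extend}(a), Proposition \ref{prop:decomp}, and Lemma \ref{lem:free-special}, so the verification is immediate and the corollary follows.
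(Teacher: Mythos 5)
Your proof is correct and takes essentially the same approach as the paper, which simply labels the corollary an immediate consequence of the proof of Proposition \ref{prop:free-ptns-exist}; that proof constructs exactly this disjoint decomposition $F(n,r) = F'(n,r) \sqcup F''(n,r)$ for the last block row and notes the argument is similar for any other block row or column. Your extra care in confirming that Lemma \ref{lem:free-special}, Proposition \ref{prop:extend}(a), and Proposition \ref{prop:decomp} are all stated for an arbitrary block row or column (not just the last) is exactly the check that makes the ``immediate'' claim rigorous.
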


\begin{example}\label{ex:free-patterns}
(i) We now construct $F(4,2)$, under the assumption that $F(3,2)$ and
  $D(4,1) \cong F'(4,2)$ are known.  It is easy to check that $F(3,2)
  = \{(32,32)\}$. Hence
\[
F''(4,2)=\bigcup_{1\leq j \leq 4}\theta^j_4(F(3,2)) =\bigcup_{1\leq j
  \leq 4}\theta^j_4\{(32),(32)\}
\]
where $\theta^1_4\{32,32\}= \{32,43\}$, $\theta^2_4\{32,32\}=
\{32,43\}$, $\theta^3_4\{32,32\}= \{32,42\}$, and
$\theta^4_4\{32,32\}= \{32,32\}$.  We refer to Example \ref{Dn1} for
$F'(4,2)$. It follows that $F(4,2)$ is as depicted below:
 \[ F(4,2): \quad
 \setlength{\arraycolsep}{2.7pt}\scriptsize
\begin{array}{c|ccc|ccc|ccc|ccc|}
  &\rt{12}&\rt{13}&\rt{14}&\rt{21} &\rt{23}&\rt{24}&\rt{31}&\rt{32}
  &\rt{34}&\rt{41}&\rt{42}&\rt{43}   \\ \hline
  32   &&&	&&&	&&\cm&	&&\cm&\cm\\ \hline
  42  &&&	&&&\cm	&&\cm&\cm	&&\cm&\cm\\ 
 43    &&&	&&&\cm	&&\cm&\cm	&&\cm&\cm\\ 
     \hline
\end{array}\; .
\]

(ii) Now we consider $F(5,3)$, under the assumption that $F(4,3)$ and
$D(5,2)$ are both known. It is easy to check that $F(4,3) = \{(432,
432)\}$.  It follows that
 \[
  F'' (5,3): \quad
\setlength{\arraycolsep}{2.7pt}\scriptsize
\begin{array}{c|c|ccc|ccccc|ccccc|}
  &\rt{254}&\rt{325}&\rt{352}&\rt{354}%
  &\rt{425}&\rt{432}&\rt{435}&\rt{452}%
  &\rt{453}&\rt{524}&\rt{532}&\rt{534}%
  &\rt{542}&\rt{543}\\ \hline

  432 &  && &  &&\cm&& &  &&\cm&&\cm&\cm \\ 
    \hline
\end{array}\,
 \]
since $\theta^5_5(432,432)=(432,432)$,
$\theta^4_5(432,432)=(432,532)$, $\theta^3_5(432,432)=(432,542)$, and
$\theta^2_5(432,432)=(432,543)$.  See Example \ref{ex:D52} for
$F'(5,3)$. Taking the union of $F''(5,3)$ with $F'(5,3)$ gives
$F(5.3)$ as depicted below:
\[
  F(5,3): \quad
\setlength{\arraycolsep}{2.7pt}\scriptsize
\begin{array}{c|c|ccc|ccccc|ccccc|}
  &\rt{254}&\rt{325}&\rt{352}&\rt{354}%
  &\rt{425}&\rt{432}&\rt{435}&\rt{452}%
  &\rt{453}&\rt{524}&\rt{532}&\rt{534}%
  &\rt{542}&\rt{543}\\ \hline
  432 &  && &  &&\cm&& &  &&\cm&&\cm&\cm \\ \hline
  532 &\cm &&\cm&\cm &&\cm&&\cm&\cm &&\cm&&\cm&\cm \\ 
  
  542 &\cm &\cm&\cm&\cm &\cm&\cm&\cm&\cm&\cm &\cm&\cm&\cm&\cm&\cm \\
  543 &\cm &\cm&\cm&\cm &\cm&\cm&\cm&\cm&\cm &\cm&\cm&\cm&\cm&\cm \\ \hline
\end{array}\,.
\]
This ends Example \ref{ex:free-patterns}.
\end{example}

It remains to prove Property 4. The proof given below closely follows the
proof of Proposition \ref{prop:decomp}.

\begin{prop}\label{prop:property-4}
Property 3 for $(n-1,r)$ implies Property 4 for $(n,r)$.
\end{prop}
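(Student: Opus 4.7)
The plan is to mirror the proof of Proposition \ref{prop:decomp}, but now track the degrees of freedom at each stage so as to extract an explicit free pattern $D(n,r)$. I will base the construction on the $i$th block row; the $j$th block column case is analogous by transposition.

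The first observation is that any decomposition $\bA = \sum_{j=1}^n \bA(j)$ with $\bA(j) \in \E_\Bbbk(n,r)^i_j$ forces $\bA(j)^i_j = \bA^i_j$ for each $j$ (since $\bA(k)^i_j = \bzero$ whenever $k \ne j$), and hence $\rho(\bA(j)) = \bA^i_j$ by Proposition \ref{prop:special-ext-form}(c). Thus each $\bA(j)$ must be a special extension of $\bA^i_j$. By Property 3 for $(n-1,r)$ combined with the isomorphism $\theta^i_j$ of Proposition \ref{prop:n-reduce-iso}, which intertwines restriction by Lemma \ref{lem:theta-rho-commute}, the set of such special extensions is in bijection with assignments to the transported free pattern $\theta^i_j F(n-1,r) \subset I'(n,r) \times I'(n,r)$.

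Next, I repeat the right-to-left construction of Proposition \ref{prop:decomp}. For $j = n, n-1, \ldots, r+2$ (if any) take $D_j := \theta^i_j F(n-1,r)$, tagged with index $j$; an arbitrary assignment $f_j: D_j \to \Bbbk$ determines $\bA(j)$ without constraint. For $j = r+1, r, \ldots, 2$ the choice of $\bA(j)$ is constrained by the agreement condition on columns indexed by $\Lin_{j-1}$ (as in the proof of Proposition \ref{prop:decomp}), which forces the values of certain entries of $\theta^i_j F(n-1,r)$; I set $D_j$ to be the complementary subset of unfixed entries, so that any assignment $f_j: D_j \to \Bbbk$ determines $\bA(j)$ uniquely via Lemma \ref{lem:partial} applied in the case $(n-1,r)$. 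Finally $\bA(1)$ is the forced residual $\bC(2) = \bA - \bA(2) - \cdots - \bA(n)$, which belongs to $\E_\Bbbk(n,r)^i_1$ by the argument of Proposition \ref{prop:decomp}; it carries no further free parameters. I then define $D(n,r) = \bigsqcup_{j=2}^n \{j\} \times D_j$.

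The main obstacle is establishing uniqueness: a given assignment $f: D(n,r) \to \Bbbk$ must yield a unique tuple $(\bA(1),\ldots,\bA(n))$. For $j \ge r+2$ this is immediate from the bijection with $F(n-1,r)$ supplied by Property 3 for $(n-1,r)$. For $j \le r+1$ one must verify that the agreement conditions together with the chosen free parameters uniquely pin down $\bA(j)$; this amounts to checking that the linear agreement system restricted to $\theta^i_j F(n-1,r)$ has solution set parametrized precisely by the unfixed entries, which follows from the uniqueness half of Property 3 for $(n-1,r)$ and the transportation of constraints through $\theta^i_j$. Uniqueness of $\bA(1)$ is automatic once $\bA(2),\ldots,\bA(n)$ are fixed. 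The technical subtlety is that Lemma \ref{lem:partial} in case $(n-1,r)$ formally requires Property 4 for $(n-1,r-1)$, but this is available in the interleaved double induction on $(n,r)$ that is already in force for Properties 1--4, as explicitly acknowledged in the paper's inductive setup.
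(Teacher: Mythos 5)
Your proposal follows the paper's strategy almost step for step: base the decomposition on a fixed block row, observe that each $\bA(j)$ must be a special extension of $\bA^i_j$, use $\theta^i_j F(n-1,r)$ as the candidate pool of free parameters, split into the ``unconstrained'' range $j\ge r+2$ and the ``constrained'' range $j\le r+1$, and absorb $\bA(1)$ as the forced residual. The organization into $D(n,r)=\bigsqcup_j \{j\}\times D_j$ is exactly the paper's $D(n,r)=D'(n,r)\cup D''(n,r)$.

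However, there is a genuine gap in your treatment of the constrained range $j=r+1,\dots,2$. You set $D_j$ to be ``the complementary subset of unfixed entries'' of $\theta^i_j F(n-1,r)$ and assert that the agreement system (on columns indexed by $\Lin_{j-1}$) ``has solution set parametrized precisely by the unfixed entries,'' claiming this follows from the uniqueness half of Property 3 for $(n-1,r)$. This does not follow automatically. The prescribed columns impose linear constraints on the parameters in $\theta^i_j F(n-1,r)$, but fixing an entry in a prescribed column can cascade to force entries in \emph{non}-prescribed columns via the slice-sum equations. It is therefore not a priori clear that the constraint system decomposes as ``a subset of $\theta^i_j F(n-1,r)$ is fixed; the complement is free.'' The paper closes this gap by running the modified Algorithm \ref{algo}: entries whose label contains $j$, and entries labelled by $\Lin_{j-1}^{\Sym_r}$, are initialised to colour $0$, and the algorithm then tracks the cascade of forced entries, producing $I'_j(n,r)$ and hence $\ov{F}(n,r)^n_j = \{(\bi,\bj)\in F(n,r)^n_j : \bj\in I'_j(n,r)\}$ as the genuinely free subset. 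Your proof needs this (or an equivalent explicit mechanism) to make $D_j$ well-defined and to verify the uniqueness of $\bA(j)$; waving at ``transportation of constraints through $\theta^i_j$'' does not supply it.

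Two smaller points. First, you do not address the boundary case $n\le r+1$, where the first range $j=n,\dots,r+2$ is empty and the second range would include indices $>n$; the paper handles this separately, observing via Remark \ref{rmk:sharper} that the decomposition is unique and hence $D(n,r)=\emptyset$. Second, your final remark on the availability of Property 4 for $(n-1,r-1)$ is more roundabout than necessary: the paper notes directly that Property 3 for $(n-1,r)$ implies Property 4 for $(n-1,r-1)$ (via the bijection $\pi$ of Lemma \ref{lem:bijections}), so there is no need to appeal to the global double induction at this point.
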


\begin{proof}
We need to prove the existence of the set $D(n,r)$ satisfying
Property~4. Property~3 for $(n-1,r)$ implies Property 4
for $(n-1,r-1)$ and also implies Property 1 for $(n-1,r)$, so the
hypotheses of Proposition \ref{prop:decomp} are satisfied, and hence
its conclusion holds. In other words, any given $\bA$ in
$\E_\Bbbk(n,r)$ has a decomposition based on a chosen block row or
column. We assume for concreteness that it is based on the last block
row, as in the proof of Proposition \ref{prop:decomp}. The argument
closely follows the proof of that proposition.

\smallskip\noindent\textbf{Case 1.}  If $n \le r+1$ then by Case 1 of
the proof of Proposition \ref{prop:decomp}, the desired decomposition
$\bA = \bA(1) + \cdots+ \bA(n)$ is unique. Hence $D(n,r)$ is empty in
this case.

\smallskip\noindent\textbf{Case 2.} Assume henceforth that $n > r+1$.
By Lemma \ref{lem:free-special} and the hypothesis, free patterns
\[F(n,r)^n_j = \theta^n_j F(n-1,r)\] are available for any
$j=1,\dots, n$. We may assume that $F(n-1,r)$ is row- and
column-terminal.

\smallskip\emph{Step 1.} For each $j=r+2, \dots, n$ we choose arbitrary
assignments $F(n,r)^n_j \to \Bbbk$. Each assignment uniquely
determines a matrix $\bA(j)$ in $\E_\Bbbk(n,r)_j^n$.  Therefore we
define
\begin{equation*}
D'(n,r) = \bigcup_{r+2 \le j \le n} \{j\}\times F(n,r)^n_j \, .
\end{equation*}
This set parametrises a set of free entries that uniquely determines
matrices $\bA(r+2), \dots, \bA(n)$ in Case 2 of the proof of
Proposition \ref{prop:decomp}. Set $\bC(j+2) = \bA - \sum_{j=r+2}^n
\bA(j)$.

\smallskip\emph{Step 2.}  Now we proceed by reverse induction on $j$
running from $r+1$ down to $2$, in order to identify a set $D''(n,r)$
of free entries parametrising the choice of $\bA(j+1), \dots, \bA(2)$
satisfying condition \eqref{eq:agrees} in the proof of Proposition
\ref{prop:decomp}. Clearly the union
\begin{equation*}\label{toomuch2}
\bigcup_{2\leq j \leq r+1} \{j\}\times F(n,r)^n_j 
\end{equation*}
is an upper bound on $D''(n,r)$. This bound is not tight due to linear
dependencies caused by the prescription of columns labelled by
elements of ${\Lin_{j-1}}^{\Sym_r}$ in the proof of Step 2 of
Proposition \ref{prop:decomp}. We have to remove those additional
dependencies in order to obtain $D''(n,r)$.

We do this by applying a modified version of Algorithm
\ref{algo}. Namely, for each fixed $j = r+1, \dots, 2$ we initialise
each entry $j_1 \cdots j_r$ of $I'(n,r)$ containing $j$ to colour 0,
and (following Step 2 in Case 2 of the proof of Proposition
\ref{prop:decomp}) do the same for each entry belonging to
${\Lin_{j-1}}^{\Sym_r}$. Then we run Algorithm \ref{algo} to determine
the independent (free) columns in a generic row of an extension. Let
$I'_j(n,r)$ be the set of elements coloured 1 in that algorithm. We
define
\[
\ov{F}(n,r)^n_j = \{(i_1\cdots i_r, j_1\cdots j_r) \in F(n,r)^n_j :
j_1\cdots j_r \in I'_j(n,r)\}.
\]
and we accordingly set
\[
D''(n,r) =  \bigcup_{2\leq j \leq r+1} \{j\}\times
  \ov{F}(n,r)^n_j \, .
\]
We claim that the desired pattern $D(n,r)$ is equal to the union
\[
D(n,r) = D'(n,r) \cup D''(n,r).
\]
To see this, observe that every decomposition $\bA = \bA(1) +\cdots+
\bA(n)$ in Proposition \ref{prop:decomp} determines a unique
assignment
\begin{equation}\label{eq:assign-D}
f: D(n,r) \to \Bbbk
\end{equation}
by setting $f(\{j\} \times (\bp,\bq)) = a(j)^\bp_\bq$.  Conversely,
for each $r+1 \ge j \ge 2$, each assignment to $\ov{F}(n,r)^n_j$ along
with the values in the prescribed columns indexed by
${\Lin_{j-1}}^{\Sym_r}$ forces a corresponding assignment to
$F(n,r)^n_j$. (Indeed, the algorithm was designed with that purpose in
mind.) It thus follows from the proof of Case 2 of Proposition
\ref{prop:decomp} that each assignment as in \eqref{eq:assign-D}
determines a unique decomposition of the form $\bA = \bA(1) +\cdots+
\bA(n)$ with the required properties.
\end{proof}

\begin{example}\label{Dn1}
To illustrate the above proof, we construct $D(n,1)$, assuming that
$F(n-1,1) = \{(i,j): 2 \le i,j \le n-1\}$. For each $j=1, \dots,
n$ we have
\[
F(n,1)^n_j = \{ (p,q): 2 \le p \le n-1,\ 2 \le q \le n,\ q \ne j\}.
\]
Furthermore, $\ov{F}(n,1)^n_2$ is obtained from $F(n,1)^n_2$ by
excising all entries in its leftmost column; that is,
\[
\ov{F}(n,1)^n_2 = \{ (p,q): 2 \le p \le n-1,\ 4 \le q \le n\}.
\]
Thus, as in the proof of Proposition \ref{prop:property-4}, we obtain
\[
D(n,1) = \{2\}\times \ov{F}(n,r)^n_2 \quad\cup\quad \bigcup_{3\leq j
  \leq n}\{j\}\times F(n,r)^n_j \, .
\]
For instance, when $n=4$ this can be depicted by the following table:
\[ D(4,1): \quad
 \setlength{\arraycolsep}{2.7pt}\scriptsize
\begin{array}{c|cccc||cccc||cccc||cccc|}
\hline
j=&\multicolumn{4}{c||}{1}	
&\multicolumn{4}{c||}{2}
&\multicolumn{4}{c||}{3}
&\multicolumn{4}{c|}{4}					\\ \hline
   &1&2&3&4 &1&2&3&4 &1&2&3&4 &1&2&3&4 \\ \hline
  1 &&&& &&&& &&&& &&&&\\
  2 &&&& &&&&\cm && \cm&&\cm && \cm&\cm&\\
  3 &&&& &&&&\cm && \cm&&\cm && \cm&\cm&\\ 
    4 &&&& &&&& &&&& &&&&\\ 
    \hline
\end{array}
\]
in which the indices labelling checkmarked positions correspond to
elements of $D(4,1)$. By Lemma \ref{lem:bijections}, it follows that
$F'(4,2)$ can be depicted by
\[
F'(4,2): \quad
 \setlength{\arraycolsep}{2.7pt}\scriptsize
\begin{array}{c|ccc|ccc|ccc|ccc|}
  &\rt{12}&\rt{13}&\rt{14}&\rt{21} &\rt{23}&\rt{24}&\rt{31}&\rt{32}
  &\rt{34}&\rt{41}&\rt{42}&\rt{43}   \\ \hline
  42  &&&	&&&\cm	&&\cm&\cm	&&\cm&\cm\\ 
  43    &&&	&&&\cm	&&\cm&\cm	&&\cm&\cm\\ 
  \hline
\end{array}\; .
\]
\end{example}

\begin{example}\label{ex:D52}
We now compute $D(5,2)$, which is in bijection with $F'(5,3)$.  We
first run Algorithm \ref{algo} to calculate
$I'_j(5,2)$ for $j=3,2$:
\[
  I'_3(5,2): \quad
 \setlength{\arraycolsep}{2.7pt}\scriptsize
\begin{array}{c|cccc|cccc|cccc|cccc|cccc|}
  &\rt{12}&\rt{13}&\rt{14}&\rt{15}&\rt{21} &\rt{23}&\rt{24}&\rt{25}
  &\rt{31}&\rt{32} &\rt{34}&\rt{35}&\rt{41}&\rt{42}&\rt{43}  &\rt{45} 
    &\rt{51}&\rt{52}&\rt{53}  &\rt{54}   \\ \hline
       &0&0&0	&0	&0&0&0	&1	 &0&0&0&0	&0&1&0&1 &0&1 &0&1
       \\ 
     \hline
\end{array}\;  
\]
\[
  I'_2(5,2): \quad
 \setlength{\arraycolsep}{2.7pt}\scriptsize
\begin{array}{c|cccc|cccc|cccc|cccc|cccc|}
    &\rt{12}&\rt{13}&\rt{14}&\rt{15}&\rt{21} &\rt{23}&\rt{24}&\rt{25}&\rt{31}&\rt{32} &\rt{34}&\rt{35}&\rt{41}&\rt{42}&\rt{43}  &\rt{45} 
    &\rt{51}&\rt{52}&\rt{53}  &\rt{54}   \\ \hline
       &0&0&0	&0	 &0&0&0&0&0&0&0	&1	&0&0&1&1	&0&0 &1&1
       \\ 
     \hline
\end{array}\;  
\]
To justify this, we go through the algorithm for $I'_3(5,2)$ a little
more slowly.  The initial colouring is
 \[  
 \setlength{\arraycolsep}{2.7pt}\scriptsize
\begin{array}{c|cccc|cccc|cccc|cccc|cccc|}
  &\rt{12}&\rt{13}&\rt{14}&\rt{15}&\rt{21} &\rt{23}&\rt{24}
  &\rt{25}&\rt{31}&\rt{32} &\rt{34}&\rt{35}&\rt{41}&\rt{42}&\rt{43}  &\rt{45} 
    &\rt{51}&\rt{52}&\rt{53}  &\rt{54}   \\ \hline 
       &0&0&	&	&0&0&&	&0	&0&0&0	&	&&0&	& &&0&	\\ 
     \hline
\end{array}\;  .
\] 
We now list the colourings obtained after three complete iterations
(doing both Steps 1 and 2) of the algorithm:
  \[  
 \setlength{\arraycolsep}{2.7pt}\scriptsize
\begin{array}{c|cccc|cccc|cccc|cccc|cccc|}
        &\rt{12}&\rt{13}&\rt{14}&\rt{15}&\rt{21} &\rt{23}&\rt{24}&\rt{25}&\rt{31}&\rt{32} &\rt{34}&\rt{35}&\rt{41}&\rt{42}&\rt{43}  &\rt{45} 
    &\rt{51}&\rt{52}&\rt{53}  &\rt{54}   \\ \hline 
       &0&0&		&	&0&0&&	&0	&0&0&0	&	&&0&	&		&&0&1			\\ 
     \hline
\end{array}\;  
\] 
 \vspace{1mm}
  \[  
 \setlength{\arraycolsep}{2.7pt}\scriptsize
\begin{array}{c|cccc|cccc|cccc|cccc|cccc|}
        &\rt{12}&\rt{13}&\rt{14}&\rt{15}&\rt{21} &\rt{23}&\rt{24}&\rt{25}&\rt{31}&\rt{32} &\rt{34}&\rt{35}&\rt{41}&\rt{42}&\rt{43}  &\rt{45} 
    &\rt{51}&\rt{52}&\rt{53}  &\rt{54}   \\ \hline 
       &0&0&		&	&0&0&&	&0	&0&0&0	&0	&0&0&0	&0&1&0&1			\\ 
     \hline
\end{array}\;  
\]  \vspace{1mm}
   \[  
 \setlength{\arraycolsep}{2.7pt}\scriptsize
\begin{array}{c|cccc|cccc|cccc|cccc|cccc|}
        &\rt{12}&\rt{13}&\rt{14}&\rt{15}&\rt{21} &\rt{23}&\rt{24}&\rt{25}&\rt{31}&\rt{32} &\rt{34}&\rt{35}&\rt{41}&\rt{42}&\rt{43}  &\rt{45} 
    &\rt{51}&\rt{52}&\rt{53}  &\rt{54}   \\ \hline 
       &0&0&0		&0	&0&0&0&1	&0	&0&0&0	&0	&0&0&0	&0&1&0&1			\\ 
     \hline
\end{array}\;  
\]
respectively.  Thus we  obtain 
\[ \ov{F}(5,2)^5_3: \quad
  \setlength{\arraycolsep}{2.7pt}\scriptsize
\begin{array}{c|cccc|cccc|cccc|cccc|cccc|}
        &\rt{12}&\rt{13}&\rt{14}&\rt{15}&\rt{21} &\rt{23}&\rt{24}&\rt{25}&\rt{31}&\rt{32} &\rt{34}&\rt{35}&\rt{41}&\rt{42}&\rt{43}  &\rt{45} 
    &\rt{51}&\rt{52}&\rt{53}  &\rt{54}   \\ \hline 
  32   	&&&&			&&&&			&& &	&&& &	&		&&\cm&&\cm							\\ \hline
    42  	&&&&			&&&&\cm			&& && && &	&		&&\cm&&\cm				\\ 
   43    	&&&&			&&&&\cm			&& && && &	&		&&\cm&&\cm							\\ 
     \hline
\end{array}\; .
\]
Now let $j=2$.  The initial colouring is
 \[  \setlength{\arraycolsep}{2.7pt}\scriptsize
\begin{array}{c|cccc|cccc|cccc|cccc|cccc|}
        &\rt{12}&\rt{13}&\rt{14}&\rt{15}&\rt{21} &\rt{23}&\rt{24}&\rt{25}&\rt{31}&\rt{32} &\rt{34}&\rt{35}&\rt{41}&\rt{42}&\rt{43}  &\rt{45} 
    &\rt{51}&\rt{52}&\rt{53}  &\rt{54}   \\ \hline 
       &0&0&0&0	
       &0&0&0&0	
       &0&0&&						 
       &0&0&&
              &0&0&&    	\\   \hline
\end{array}\;  
\] 
In Step 1 of the algorithm, we 1-colour 54.  In Step 2 this forces us
to 0-colour 53; this in turn forces us to 0-colour 43; this in turn
forces us to 0-colour 45; this in turn forces us to 0-colour 35; this
in turn forces us to 0-colour 34.  Thus $I'_{2}(5,2)=\{54\}$.
Therefore we have
\[ \ov{F}(5,2)^5_2: \quad
  \setlength{\arraycolsep}{2.7pt}\scriptsize
\begin{array}{c|cccc|cccc|cccc|cccc|cccc|}
        &\rt{12}&\rt{13}&\rt{14}&\rt{15}&\rt{21} &\rt{23}&\rt{24}&\rt{25}&\rt{31}&\rt{32} &\rt{34}&\rt{35}&\rt{41}&\rt{42}&\rt{43}  &\rt{45} 
    &\rt{51}&\rt{52}&\rt{53}  &\rt{54}   \\ \hline 
  32   	&&&&			&&&&			&& &	&&& &	&		&&&&\cm							\\ \hline
    42  	&&&&			&&&&			&& && && &	&		&&&&\cm				\\ 
   43    	&&&&			&&&&			&& && && &	&		&&&&	\cm						\\ 
     \hline
\end{array}\; .
\]
We summarise this in the following table, where the last two blocks
are determined by $F(4,2)$ in Example \ref{ex:free-patterns}(i):
\[
D(5,2): \quad
\setlength{\arraycolsep}{2.7pt}\scriptsize
\begin{array}{c|c||ccc||ccccc||ccccc|}
\hline j=&{2}	
&\multicolumn{3}{c||}{3}
&\multicolumn{5}{c||}{4}
&\multicolumn{5}{c|}{5}					\\ \hline 
 &\rt{54}&\rt{25}&\rt{52}&\rt{54}%
  &\rt{25}&\rt{32}&\rt{35}&\rt{52}%
  &\rt{53}&\rt{24 }&\rt{32 }&\rt{34 }%
  &\rt{42}&\rt{43}\\ 
  \hline
   32 &\cm &&\cm&\cm &&\cm&&\cm&\cm &&\cm&&\cm&\cm \\ 
  
   42 &\cm &\cm&\cm&\cm &\cm&\cm&\cm&\cm&\cm &\cm&\cm&\cm&\cm&\cm \\
   43 &\cm &\cm&\cm&\cm &\cm&\cm&\cm&\cm&\cm &\cm&\cm&\cm&\cm&\cm \\ \hline
\end{array}\,
\]
in which we omit columns that have no information, in order to save space.
We hence obtain 
\[
  F' (5,3): \quad
\setlength{\arraycolsep}{2.7pt}\scriptsize
\begin{array}{c|c|ccc|ccccc|ccccc|}
  &\rt{254}&\rt{325}&\rt{352}&\rt{354}%
  &\rt{425}&\rt{432}&\rt{435}&\rt{452}%
  &\rt{453}&\rt{524}&\rt{532}&\rt{534}%
  &\rt{542}&\rt{543}\\ \hline
   532 &\cm &&\cm&\cm &&\cm&&\cm&\cm &&\cm&&\cm&\cm \\ 
  
  542 &\cm &\cm&\cm&\cm &\cm&\cm&\cm&\cm&\cm &\cm&\cm&\cm&\cm&\cm \\
  543 &\cm &\cm&\cm&\cm &\cm&\cm&\cm&\cm&\cm &\cm&\cm&\cm&\cm&\cm \\ \hline
\end{array}\,.
\] 
\end{example}

\begin{thm}\label{thm:properties}
  All the Properties 1--4 hold for any $n \ge 2$, $r\ge 1$. 
\end{thm}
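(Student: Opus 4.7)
The plan is to establish all four Properties simultaneously by induction on $n+r$, with $n \ge 1$ and $r \ge 0$. The crucial preliminary observation is that Property 3 immediately implies Property 1 (any assignment $f\colon F(n,r)\to\Bbbk$ produces an extension of the given $\bB$), and Property 4 immediately implies Property 2 (any assignment $f\colon D(n,r)\to\Bbbk$ produces a decomposition of the given $\bA$). So it suffices to verify Properties 3 and 4 at every $(n,r)$ of interest.

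For the base of the induction I would dispose of the degenerate corners. When $n=1$, the module $\V$ is one-dimensional, $\E_\Bbbk(1,r)\cong \Bbbk$, and restriction is the identity, so Properties 3 and 4 hold trivially with $F(1,r)=D(1,r)=\emptyset$. When $r=0$ we have $\E_\Bbbk(n,0)\cong\Bbbk$ and there is nothing to check. (The free pattern $F(n,1)=\{(i,j):2\le i,j\le n\}$ exhibited in Section \ref{sec:extensions} also provides an independent direct verification of Property 3 at $(n,1)$, if one prefers to start the induction there.)

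For the inductive step, fix $(n,r)$ with $n\ge 2$, $r\ge 1$, and assume Properties 3 and 4 are known at every $(n',r')$ with $n'+r'<n+r$. I would first apply Proposition \ref{prop:property-4} to obtain Property 4 at $(n,r)$; this requires only Property 3 at $(n-1,r)$, which is available. I would then apply Proposition \ref{prop:free-ptns-exist} to obtain Property 3 at $(n,r)$; this requires Property 4 at $(n,r-1)$ and Property 3 at $(n-1,r)$, both granted by the inductive hypothesis. Properties 1 and 2 at $(n,r)$ then follow from Properties 3 and 4 respectively.

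The main thing to verify (the only real risk of the argument) is that the inductive recursion is genuinely well-founded: every dependency invoked in Propositions \ref{prop:property-4} and \ref{prop:free-ptns-exist} strictly decreases either $n$ or $r$ without increasing the other, so $n+r$ is strictly decreased in each appeal to the inductive hypothesis. All the substantive work has already been done in Propositions \ref{prop:extend}, \ref{prop:decomp}, \ref{prop:free-ptns-exist}, and \ref{prop:property-4}; the theorem itself is essentially a bookkeeping statement that assembles those conditional implications into an unconditional one.
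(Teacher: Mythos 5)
Your proof is correct, and it takes essentially the same route as the paper: an interleaved induction in which Propositions \ref{prop:free-ptns-exist} and \ref{prop:property-4} supply the inductive steps and Propositions \ref{prop:extend}, \ref{prop:decomp} furnish the auxiliary lemmas. The paper phrases this as a ``double induction on $n,r$'' with base cases at $r=1$ (for Properties 1, 3, via the explicit pattern $F(n,1)$) and $r=0$ (for Properties 2, 4); you instead induct on $n+r$ and take $n=1$, $r=0$ as the base, which is a cleaner way of making the well-foundedness explicit. Your observation that one only needs to carry Properties 3 and 4 through the induction is a nice streamlining, and it is correct, but one small caveat: Property~3 at $(n,r)$ alone gives only the first sentence of Property~1 (existence of an extension), not the ``more precisely'' clause asserting that the extension decomposes as $\bA(1)+\cdots+\bA(n)$ with $\bA(j)\in\E_\Bbbk(n,r)^i_j$. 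To get the full Property~1 you must also invoke Property~4 (equivalently Property~2) at $(n,r)$ to decompose the $\bA$ so produced. Since you establish both Properties~3 and 4 at each $(n,r)$ anyway, the argument goes through; I would just rephrase the opening observation to ``Properties 3 and 4 at $(n,r)$ together imply Properties 1 and 2 at $(n,r)$'' rather than attributing each implication to a single property.
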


\begin{proof}
This follows from the results of this section by a double induction on
$n,r$. To be precise, we summarize the results proved in Propositions
\ref{prop:extend}, \ref{prop:decomp}, \ref{prop:free-ptns-exist}, and
\ref{prop:property-4} in the table below:
\begin{center}
\begin{tabular}{c|l|c}
  Result & \multicolumn{1}{c|}{Hypotheses} & Conclusion \\ \hline
  \ref{prop:extend}& Prop.~1$(n-1,r)$, Prop.~2$(n,r-1)$ & Prop.~1$(n,r)$ \\
  \ref{prop:decomp}& Prop.~1$(n-1,r)$, Prop.~4$(n-1,r-1)$ & Prop.~2$(n,r)$ \\
  \ref{prop:free-ptns-exist}& Prop.~3$(n-1,r)$, Prop.~4$(n,r-1)$ & Prop.~3$(n,r)$ \\
  \ref{prop:property-4}& Prop.~3$(n-1,r)$, Prop.~4$(n-1,r-1)$ & Prop.~4$(n,r)$\\ \hline
\end{tabular}\; .
\end{center}
Properties 1 and 3 for $(n,1)$ are evident, for any
$n$. Properties 2 and 4 hold vacuously for $r=0$, for any $n$, as
there is nothing to do since $\E_\Bbbk(n,0) \cong \Bbbk$. These serve
as base cases for the induction.
\end{proof}

\appendix
\section{Gibson's theorem}\label{sec:Gibson}\noindent
We explain the connection to earlier work of P.M.~Gibson
\cite{Gibson}.  Assume in this appendix that $n > 1$ and that $\Bbbk$
is a (not necessarily commutative) unital ring. Gibson observed that
the algebra $\E_\Bbbk(n,1)$ of $n \times n$ GDS matrices is free over
the ring $\Bbbk$ and spanned by permutation matrices, and he
constructed an explicit basis of $\E_\Bbbk(n,1)$ of permutation
matrices.

In order to describe Gibson's basis, we define $i+1 \text{ mod } n$ to
be the unique element $t$ of $\{1, \dots, n\}$ such that $i \equiv t$
modulo $n$.  Let
\[
\bQ_n = \big( \delta_{i,\, j+1 \text{ mod } n} \big)_{i,j = 1, \dots, n}
\]
be the $n \times n$ circulant permutation matrix representing the
descending $n$-cycle $(n, n-1, \dots, 1) \in W_n$.  For example, if
$n=4$ we have
\[
\bQ_4 = 
\begin{bmatrix}
  0&1&0&0\\
  0&0&1&0\\
  0&0&0&1\\
  1&0&0&0
\end{bmatrix}.
\]
Let $m^i_j$ be the $(i,j)$-entry of $\bQ_n + \bI_n$, where $\bI_n =
\big( \delta_{i,j} \big)_{i,j = 1, \dots, n}$ is the $n \times n$
identity matrix.  Clearly
\[ m^i_{j} = 
\begin{cases}
  1 & \text{ if } i=j \text{ or } i+1 \text{ mod } n = j\\
  0 & \text{ otherwise.}
\end{cases}
\]
There are two zero entries in each column of $\bQ_n + \bI_n = \big(
m^i_{j} \big)$, one for each pair $(r,c) \in \Gamma_n$, where we define
$\Gamma_n$ to be the set of $(r,c)$ such that $r = c$ or $r+1 \text{
  mod } n = c$.  So there are $n(n-2)$ zero entries in $\bQ_n +
\bI_n$. If $m^r_c = 0$, there is a unique $n \times n$ permutation
matrix $\bG_{r,c} = \big(g^i_j\big)$ such that
\[
g^r_c = 1, \text{ and } g^i_j \le m^i_j \text{ for all } (i,j) \ne (r,c).
\]
In fact, one can show that the $(n-1) \times (n-1)$ submatrix obtained
by removing row $r$ and column $c$ of $\bG_{r,c}$ is equal to
\[
  \begin{cases}
    \bQ_{n-1} & \text{ if } r<c\\
    \bI_{n-1} & \text{ if } c<r.
  \end{cases}
\]
This provides a recursive description of $\bG_{r,c}$.  We obtain
$n(n-2)$ linearly independent permutation matrices $\bG_{r,c}$ in this
way, one for each $(r,c) \in \Gamma_n$.  Gibson proved the following
result.

\begin{thm}[{\cite{Gibson}*{Theorem~2.1}}]
Let $\Bbbk$ be any unital ring. Then the set of permutation matrices 
\[
  \{ \bG_{r,c}: 1 \le r,c \le n \text{ and } (r,c) \in \Gamma_n \}
  \cup \{ \bQ_n, \bI_n \}
\]
is a basis over $\Bbbk$ of $\E_\Bbbk(n,1)$. In particular, the algebra
of all $n \times n$ GDS matrices is free over $\Bbbk$ of rank $(n-1)^2
+ 1$.
\end{thm}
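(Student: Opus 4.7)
The plan is to establish linear independence and spanning separately, which together show that the displayed set is a basis of $\E_\Bbbk(n,1)$ as a free $\Bbbk$-module; the rank count $|\{(r,c) \notin \Gamma_n\}| + 2 = n(n-2) + 2 = (n-1)^2 + 1$ is then immediate. For linear independence, the decisive observation is that $\bG_{r,c}$ is the only element of the proposed basis whose $(r,c)$-entry is nonzero: by construction it has a $1$ at $(r,c) \notin \Gamma_n$ while all its remaining $1$s lie in $\Gamma_n$, and both $\bI_n$ and $\bQ_n$ are supported on $\Gamma_n$; any other $\bG_{r',c'}$ with $(r',c') \ne (r,c)$ satisfies $g^r_c \le m^r_c = 0$ by the defining inequality. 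Hence in any purported $\Bbbk$-linear relation, reading off the $(r,c)$-entries for each $(r,c) \notin \Gamma_n$ forces the coefficients of every $\bG_{r,c}$ to vanish; inspecting a single diagonal entry and a single entry in the support of $\bQ_n$ then forces the coefficients of $\bI_n$ and $\bQ_n$ to vanish as well.

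For spanning, I would proceed in two stages. Given $\bA = (a^i_j) \in \E_\Bbbk(n,1)$ with common row/column sum $s$, first form
\[
\bB \;=\; \bA \;-\; \sum_{(r,c) \notin \Gamma_n} a^r_c \, \bG_{r,c}.
\]
Since each $\bG_{r,c}$ with $(r,c) \notin \Gamma_n$ has its unique off-$\Gamma_n$ entry at $(r,c)$, we have $b^r_c = 0$ for every $(r,c) \notin \Gamma_n$, so $\bB$ is supported entirely on $\Gamma_n$; it also remains GDS as a $\Bbbk$-linear combination of GDS matrices. The problem therefore reduces to the claim that any GDS matrix $\bB$ supported on $\Gamma_n$ lies in $\Bbbk\bI_n + \Bbbk\bQ_n$.

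To verify that claim, write $\alpha_r = b^r_r$ for the diagonal entries and $\beta_r$ for the entry of $\bB$ in the unique position of row $r$ belonging to the support of $\bQ_n$. The row-sum condition gives $\alpha_r + \beta_r = s(\bB)$, while the column-sum condition at column $c$ reads $\alpha_c + \beta_{c'} = s(\bB)$, where $c'$ is the unique row index at which column $c$ meets the support of $\bQ_n$; comparing with $\beta_{c'} = s(\bB) - \alpha_{c'}$ yields $\alpha_c = \alpha_{c'}$. Since $c \mapsto c'$ is a cyclic permutation of $\{1,\dots,n\}$, iterating forces $\alpha_r$ to be independent of $r$, whence so is $\beta_r$, and $\bB = \alpha\,\bI_n + (s(\bB)-\alpha)\,\bQ_n$. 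There is no serious obstacle here: the whole argument is combinatorial and relies only on additive manipulations in $\Bbbk$, so it goes through over any unital (possibly non-commutative) ring, in accordance with the theorem's hypothesis.
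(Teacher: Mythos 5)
Your proposal is correct and follows essentially the same route as the paper: subtract $\sum a^r_c\,\bG_{r,c}$ to reduce to a GDS matrix supported on the positions where $\bQ_n+\bI_n$ is nonzero, then show that such a matrix lies in $\Bbbk\bI_n+\Bbbk\bQ_n$; the paper dismisses this last step and the linear-independence check as ``easy to see,'' whereas you supply the row/column-sum chase through the cyclic permutation $c\mapsto c'$ and the entry-by-entry independence argument, which is exactly the content that makes the ``easy'' steps rigorous. One cosmetic remark: you index the $\bG_{r,c}$ by $(r,c)\notin\Gamma_n$ while the theorem's statement writes $(r,c)\in\Gamma_n$ --- this mismatch traces back to an internal inconsistency in the paper's definition of $\Gamma_n$ (it is introduced as the support of $\bQ_n+\bI_n$ but thereafter used for the complementary set of zero positions), so your reading is defensible, but you should make your convention explicit to avoid the appearance of proving a set with the wrong cardinality is a basis.
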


To express a given $n \times n$ GDS matrix $\bA = \big(a^i_j \big)$ as
a linear combination of permutation matrices, one sets
\begin{equation}
\bB = \big(b^i_j \big) = \bA - \textstyle \sum_{(r,c) \in \Gamma_n}
a^r_c \, \bG_{r,c}.
\end{equation}
Then it is easy to see that $\bB - b^n_1 \bQ_n - b^n_n \bI_n = \bzero$, so
\begin{equation}
\bA = b^n_1 \bQ_n + b^n_n \bI_n + \textstyle \sum_{(r,c) \in \Gamma_n}
a^r_c \, \bG_{r,c}
\end{equation}
is the desired linear combination.  This shows that the proposed basis
spans $\E_\Bbbk(n,1)$. One easily checks that it is linearly
independent.

Johnsen \cite{Johnsen} found a different basis of permutation matrices
for $\E_\Bbbk(n,1)$ under the assumption that $\Bbbk$ is a field. In
that case it suffices to focus on the doubly stochastic matrices (with
row and column sums equal to $1$). See also \cite{Lai} for related work.

\begin{rmk}
  It follows from Gibson's theorem that $\E_\Bbbk(n,1)$ is spanned by
  the set of $n \times n$ permutation matrices.  This is the $r=1$
  case of Theorem~\ref{thm:main1}. 
\end{rmk}

\begin{bibdiv}
  \begin{biblist}

\bib{Auslander}{article}{
   author={Auslander, Maurice},
   title={Representation theory of Artin algebras. I, II},
   journal={Comm. Algebra},
   volume={1},
   date={1974},
   pages={177--268; ibid. 1 (1974), 269--310},
}

\bib{BH:2}{article}{
   author={Benkart, Georgia},
   author={Halverson, Tom},
   title={Partition algebras and the invariant theory of the symmetric
   group},
   conference={
      title={Recent trends in algebraic combinatorics},
   },
   book={
      series={Assoc. Women Math. Ser.},
      volume={16},
      publisher={Springer, Cham},
   },
   date={2019},
   pages={1--41},
}

\bib{BH:1}{article}{
   author={Benkart, Georgia},
   author={Halverson, Tom},
   title={Partition algebras $\mathsf{P}_k(n)$ with $2k>n$ and the
   fundamental theorems of invariant theory for the symmetric group
   $\mathsf{S}_n$},
   journal={J. Lond. Math. Soc. (2)},
   volume={99},
   date={2019},
   number={1},
   pages={194--224},
}

\bib{Benson-Doty}{article}{
   author={Benson, David},
   author={Doty, Stephen},
   title={Schur-Weyl duality over finite fields},
   journal={Arch. Math. (Basel)},
   volume={93},
   date={2009},
   number={5},
   pages={425--435},
}

%


\bib{BDM:second}{article}{
  author={Bowman, Chris},
  author={Doty, S. R.},
  author={Martin, Stuart},
  title={An integral second fundamental theorem of invariant theory
    for partition algebras},
  date={2018},
  eprint={arXiv:1804.00916},
}


\bib{Brualdi}{article}{
   author={Brualdi, R. A.},
   title={Permanent of the product of doubly stochastic matrices},
   journal={Proc. Cambridge Philos. Soc.},
   volume={62},
   date={1966},
   pages={643--648},
}

\bib{Carter-Lusztig}{article}{
   author={Carter, Roger W.},
   author={Lusztig, George},
   title={On the modular representations of the general linear and symmetric
   groups},
   journal={Math. Z.},
   volume={136},
   date={1974},
   pages={193--242},
}

\bib{dCP}{article}{
   author={de Concini, C.},
   author={Procesi, C.},
   title={A characteristic free approach to invariant theory},
   journal={Advances in Math.},
   volume={21},
   date={1976},
   number={3},
   pages={330--354},
}


\bib{DD}{article}{
   author={Dipper, Richard},
   author={Doty, Stephen},
   title={The rational Schur algebra},
   journal={Represent. Theory},
   volume={12},
   date={2008},
   pages={58--82},
}

\bib{DDH}{article}{
   author={Dipper, Richard},
   author={Doty, Stephen},
   author={Hu, Jun},
   title={Brauer algebras, symplectic Schur algebras and Schur-Weyl duality},
   journal={Trans. Amer. Math. Soc.},
   volume={360},
   date={2008},
   number={1},
   pages={189--213},
}

\bib{DDS1}{article}{
   author={Dipper, Richard},
   author={Doty, Stephen},
   author={Stoll, Friederike},
   title={Quantized mixed tensor space and Schur-Weyl duality},
   journal={Algebra Number Theory},
   volume={7},
   date={2013},
   number={5},
   pages={1121--1146},
}

\bib{DDS2}{article}{
   author={Dipper, R.},
   author={Doty, S.},
   author={Stoll, F.},
   title={The quantized walled Brauer algebra and mixed tensor space},
   journal={Algebr. Represent. Theory},
   volume={17},
   date={2014},
   number={2},
   pages={675--701},
}

\bib{Donkin}{article}{
   author={Donkin, Stephen},
   title={On Schur algebras and related algebras VI: Some remarks on
   rational and classical Schur algebras},
   journal={J. Algebra},
   volume={405},
   date={2014},
   pages={92--121},
}


\bib{Donkin:Cellularity}{article}{
  author={Donkin, Stephen},
  title={Cellularity of endomorphism algebras of Young permutation
    modules},
  eprint={arXiv:2006.02259},
   date={2020},
}

\bib{DH}{article}{
   author={Doty, Stephen},
   author={Hu, Jun},
   title={Schur-Weyl duality for orthogonal groups},
   journal={Proc. Lond. Math. Soc. (3)},
   volume={98},
   date={2009},
   number={3},
   pages={679--713},
}

\bib{DEN}{article}{
   author={Doty, Stephen R.},
   author={Erdmann, Karin},
   author={Nakano, Daniel K.},
   title={Extensions of modules over Schur algebras, symmetric groups and
   Hecke algebras},
   journal={Algebr. Represent. Theory},
   volume={7},
   date={2004},
   number={1},
   pages={67--100},
}

\bib{Garge-Nebhani}{article}{
   author={Garge, Shripad M.},
   author={Nebhani, Anuradha},
   title={Schur--Weyl duality for special orthogonal groups},
   journal={J. Lie Theory},
   volume={27},
   date={2017},
   number={1},
   pages={251--270},
}

\bib{Gibson}{article}{
   author={Gibson, Peter M.},
   title={Generalized doubly stochastic and permutation matrices over a
   ring},
   journal={Linear Algebra Appl.},
   volume={30},
   date={1980},
   pages={101--107},
}

\bib{Graham-Lehrer}{article}{
   author={Graham, J. J.},
   author={Lehrer, G. I.},
   title={Cellular algebras},
   journal={Invent. Math.},
   volume={123},
   date={1996},
   number={1},
   pages={1--34},
}

\bib{Green:book}{book}{
   author={Green, J. A.},
   title={Polynomial representations of ${\rm GL}_{n}$},
   series={Lecture Notes in Mathematics},
   volume={830},
   publisher={Springer-Verlag, Berlin-New York},
   date={1980},
   note={Second corrected and augmented edition, with an appendix on
     Schensted correspondence and Littelmann paths, by K. Erdmann,
     Green and M. Schocker, Springer, Berlin 2007},
}

\bib{HR-2005}{article}{
   author={Halverson, Tom},
   author={Ram, Arun},
   title={Partition algebras},
   journal={European J. Combin.},
   volume={26},
   date={2005},
   number={6},
   pages={869--921},
}



\bib{Johnsen}{article}{
   author={Johnsen, E. C.},
   title={Essentially doubly stochastic matrices. I. Elements of the theory
   over arbitrary fields},
   journal={Linear Algebra and Appl.},
   volume={4},
   date={1971},
   pages={255--282},
}

\bib{Jones}{article}{
   author={Jones, V. F. R.},
   title={The Potts model and the symmetric group},
   conference={
      title={Subfactors},
      address={Kyuzeso},
      date={1993},
   },
   book={
      publisher={World Sci. Publ., River Edge, NJ},
   },
   date={1994},
   pages={259--267},
}

\bib{Lai}{article}{
   author={Lai, Hang-Chin},
   title={On the linear algebra of generalized doubly stochastic matrices
   and their equivalence relations and permutation basis},
   journal={Japan J. Appl. Math.},
   volume={3},
   date={1986},
   number={2},
   pages={357--379},
}

\bib{Marbook}{book}{
   author={Martin, Paul},
   title={Potts models and related problems in statistical mechanics},
   series={Series on Advances in Statistical Mechanics},
   volume={5},
   publisher={World Scientific Publishing Co., Inc., Teaneck, NJ},
   date={1991},
   pages={xiv+344},
}

\bib{Martin:1994}{article}{
   author={Martin, Paul},
   title={Temperley-Lieb algebras for nonplanar statistical mechanics---the
   partition algebra construction},
   journal={J. Knot Theory Ramifications},
   volume={3},
   date={1994},
   number={1},
   pages={51--82},
}


\bib{Martin:2000}{article}{
   author={Martin, P. P.},
   title={The partition algebra and the Potts model transfer matrix spectrum
   in high dimensions},
   journal={J. Phys. A},
   volume={33},
   date={2000},
   number={19},
   pages={3669--3695},
}



\end{biblist}
\end{bibdiv}

\end{document}